\theoremstyle{plain}
\newtheorem{theorem}{Theorem}[section]
\newtheorem{proposition}[theorem]{Proposition}
\newtheorem{remark}[theorem]{Remark}
\newtheorem{lemma}[theorem]{Lemma}
\begin{document}

\title{Long-time stability and accuracy of the ensemble Kalman-Bucy filter for fully observed processes and small measurement noise}

\author{Jana de Wiljes\thanks{Universit\"at Potsdam, 
Institut f\"ur Mathematik, Karl-Liebknecht-Str. 24/25, D-14476 Potsdam, Germany} \and 
Sebastian Reich\thanks{Universit\"at Potsdam, Institut f\"ur Mathematik, Karl-Liebknecht-Str. 24/25, D-14476 Potsdam, Germany ({\tt sreich@math.uni-potsdam.de}) and University of Reading, Department of Mathematics and Statistics, Whiteknights, PO Box 220, Reading RG6 6AX, UK} \and
Wilhelm Stannat\thanks{TU Berlin, Institut f\"ur Mathematik, Str. des 17. Juni 136, D-10623 Berlin, Germany}
}

\maketitle

\begin{abstract} The ensemble Kalman filter has become a popular data assimilation technique in the geosciences. However, little is known theoretically about its long
term  stability and accuracy. In this paper, we investigate the behavior of an ensemble Kalman-Bucy filter applied to continuous-time filtering problems.
We derive mean field limiting equations as the ensemble size goes to infinity as well as uniform-in-time accuracy and stability results for finite ensemble sizes. The later results require that
the process is fully observed and that the measurement noise is small. We also demonstrate that our ensemble Kalman-Bucy filter is consistent with the classic 
Kalman-Bucy filter for linear systems and Gaussian processes. We finally verify our theoretical findings for the Lorenz-63 system. 
\end{abstract}

\noindent
{\bf Keywords.} Data assimilation, Kalman-Bucy filter, ensemble Kalman filter, stability, accuracy, asymptotic behavior\\
\noindent {\bf AMS(MOS) subject classifications.} 65C05, 62M20, 93E11, 62F15, 86A22

%
\section{Introduction}
%

In this paper, we consider the continuous-time filtering problem \cite{sr:jazwinski,sr:crisan} for diffusion processes of type
\begin{equation} \label{sde1}
{\rm d} X_t = f(X_t){\rm d}t + \sqrt{2} C {\rm d}W_t
\end{equation}
and observations, $Y_t$, given by
\begin{equation} \label{obs1}
{\rm d}Y_t = h(X_t){\rm d}t +  R^{1/2} {\rm d} B_t.
\end{equation}
Here $X_t$ denotes the state variable of the $N_x$-dimensional diffusion process with Lipschitz-continuous drift $f:\mathbb{R}^{N_x} \to \mathbb{R}^{N_x}$ and
constant diffusion tensor $D = C C^{\rm T}$ and $C \in \mathbb{R}^{N_x \times N_w}$. 
The observations $Y_t$ are $N_y$-dimensional with forward map $h:\mathbb{R}^{N_x} \to
\mathbb{R}^{N_y}$ and measurement error covariance matrix $R\in \mathbb{R}^{N_y \times N_y}$. Finally, 
$W_t \in \mathbb{R}^{N_w}$ and $B_t \in \mathbb{R}^{N_y}$ denote independent Brownian motion of dimension $N_w$ and $N_y$, respectively.  
It is well-known that the filtering distribution $\pi_t$, i.e., the conditional distribution in $X_t$ for given observations $Y_s$, $s \in [0,t]$, satisfies the
Kushner-Zakai equation \cite{sr:jazwinski,sr:crisan}, which we write as an evolution equation in the expectation values 
\begin{equation} \label{EV}
\pi_t[g] = \int_{\mathbb{R}^{N_x}} g(x) \pi_t(x){\rm d} x
\end{equation}
of smooth and bounded functions $g:\mathbb{R}^{N_x} \to \mathbb{R}$, i.e.
\begin{equation} \label{KS}
{\rm d} \pi_t[g] = \pi_t [ f \cdot \nabla g]{\rm d}t +  \pi_t [ \nabla \cdot D\nabla g]{\rm d}t +  \left(\pi_t[gh] - \pi_t[g]\pi_t[h]\right)^{\rm T} R^{-1}
\left( {\rm d}Y_t - \pi_t[h]{\rm d}t\right).
\end{equation}
In order to have a properly formulated filtering problem, we also have to specify the distribution at initial time $t=0$. 

Popular numerical methods for approximating solutions to (\ref{KS}) include direct finite-difference or finite-element discretizations of
(\ref{KS}) and sequential Monte Carlo methods, also called particle filters \cite{sr:crisan,sr:Doucet}. These methods lead to consistent approximations but are typically 
restricted to low-dimensional problems. In recent years, particle filter methods have become popular, which are applicable to
higher-dimensional problems but are no longer consistent. These include the ensemble Kalman filter (EnKF) \cite{sr:evensen,sr:stuart15,sr:reichcotter15}, 
which is now widely used in the geosciences.

Abstractly spoken, particle filters are defined as follows. First one defines $M$ weighted random variables $X_t^i$, called particles, 
which are i.i.d. at initial time $t=0$ with distribution $\pi_0$, and weights $w_t^i\ge 0$ with $w_0^i = 1/M$ at initial time. A particle filter is then characterized by appropriate evolution laws for the  
particles and the weights. Most known particle filters lead to particles which remain identically distributed while no longer being independent, so called interacting 
particle systems \cite{sr:DelMoral}. If the weights are furthermore kept uniform either through resampling or appropriately defined evolution equations, 
then expectation can be taken with respect to the law $\pi_t^M$ of the $M$th particle and
consistency of a particle filter means that $\lim_{M\to \infty} \pi_t^M[g] \to \pi_t[g]$.

The classic bootstrap filter \cite{sr:arul02} uses (\ref{sde1}) for the evolution of the particles and (\ref{obs1}) for the update of the weights in combination with
an appropriate resampling strategy in order to avoid the weights to degenerate. The EnKF, on the contrary, introduces 
modified evolution equations for the particles which include the observations and keep the weights uniform instead. Most available EnKF 
formulations are stated for discrete-in-time observations \cite{sr:evensen}. While the robust behavior of EnKFs has been demonstrated 
for many applications primarily arising from the geosciences, our theoretical understanding of their long-time stability and accuracy is still rather limited. 
Large sample size limits have been, for example, investigated in \cite{sr:legland11,sr:KM15} and it has been demonstrated that the EnKF converges to the
classic Kalman filter for linear systems (\ref{sde1}), linear observations (\ref{obs1}) and Gaussian initial conditions. 
Using concepts from shadowing, \cite{sr:hunt13} showed that the EnKF is stable and accurate uniformly in time for hyperbolic dynamical systems 
provided the ensemble size is larger than the dimension of the chaotic attractor. Stability and ergodicity of EnKFs have also been studied
in \cite{sr:majda15}. The authors demonstrate that the extended system consisting of (\ref{sde1}), (\ref{obs1}), and the filter algorithm possesses a unique ergodic invariant measure 
provided the existence of an appropriate Lyapunov function can be guaranteed. 
While such ergodicity results of \cite{sr:majda} are important, they do not imply accuracy of a filter. In fact, it is well known, that ensemble Kalman filter can diverge and techniques, 
such as ensemble inflation \cite{sr:evensen}, have been developed in order to stabilize a filter. 
Furthermore, it has been rigorously demonstrated, for example, in \cite{sr:KellyEtAl14} that a judicious 
choice of inflation can lead to uniform-in-time accurate state estimates. At the same time, \cite{KMT15} provides an example of catastrophic filter divergence, i.e.~an exponential 
blow-up of the ensemble systems, for a linear forward map $h(x) = Hx$ with
strongly non-normal operator $H$. 

In this paper, we investigate a time-continuous EnKF formulation which is consistent with the classic Kalman filter in the linear case and which is also stable and accurate uniformly
in time without additional ensemble inflation. In this first study, we will assume for simplicity that the system is fully observable, i.e.~$h(x)=x$ in (\ref{obs1}), and that the measurement errors are small. These assumptions can be relaxed under appropriate assumptions on the stochastic process (\ref{sde1}) and the observation process (\ref{obs1}), well known from the theory of classic Kalman filter theory (i.e.~observability and controlability) \cite{sr:jazwinski}. We will also investigate 
in future work whether the proposed filter formulations can prevent catastrophic filter 
divergence for strongly nonlinear and partially observed systems. 

The specific ensemble Kalman-Bucy filter (EnKBF) formulation, which we will investigate in this paper, is given by drawing $M$ independent realizations (called particles or ensemble members) 
$X_0^i \sim \pi_0$, which then follow the system of differential equations
\begin{equation} \label{EKB1b}
{\rm d}X_t^i = f(X_t^i){\rm d}t  + D (P_t^M)^{-1} (X_t^i - \bar x_t^M){\rm d}t - \frac{1}{2} Q_t^M R^{-1} \left( h(X_t^i){\rm d}t + \bar h_t^M
{\rm d}t - 2{\rm d}Y_t \right)
\end{equation}
for $t\ge 0$. These equations of motion for the particles are closed through the empirical estimates
\begin{equation} \label{EKB2b}
\bar x_t^M = \frac{1}{M}\sum_{i=1}^M X_t^i, \qquad P_t^M = \frac{1}{M-1} \sum_{i=1}^M (X_t^i - \bar x_t^M)
(X_t^i-\bar x_t^M)^{\rm T} ,
\end{equation}
and
\begin{equation} \label{EKB3b}
\bar h_t^M = \frac{1}{M} \sum_{i=1}^M h(X_t^i), \qquad
Q_t^M = \frac{1}{M-1} \sum_{i=1}^M (X_t^i - \bar x_t^M)(h(X_t^i) -\bar h_t^M)^{\rm T}.
\end{equation}
Finally, given a solution of (\ref{EKB1b}), we define the empirical expectation values of a function $g$ and the empirical distribution $\hat \pi_t^M$ 
by
\begin{equation} \label{EmpiricalDistribution}
\bar g^M_t := \hat \pi_t^M[g], \qquad \hat \pi_t^M(x) := \frac{1}{M} \sum_{i=1}^M \delta(x-X_t^i)\,,
\end{equation}
respectively. Here $\delta (\cdot)$ denotes the standard Dirac delta function.
The formulation (\ref{EKB1b}) has been stated first in \cite{sr:br10,sr:br11}. Alternative ensemble Kalman-Bucy formulations include stochastically perturbed
formulations \cite{sr:reich10,sr:stuart15,sr:reichcotter15} and the extended ensemble Kalman-Bucy filter, whose exponential stability and propagation of chaos properties 
have been studied in \cite{sr:DelMoral16}. 

In case $P_t^M$ is not invertible, which is surely the case for $M\le N_x$, the inverse of $P_t^M$ is replaced by its generalized inverse $(P_t^M)^+$. 
This generalization is unproblematic from a mathematical perspective 
since $(P_t^M)^{+}$ gets multiplied by a vector which is in the range of $P_t^M$ and we
show that the equations are well-posed in Section \ref{sec-problem-well-posedness}.
At the same time it is known that $M\ll N_x$ often requires application of 
localization \cite{sr:evensen,sr:reichcotter15} in order to obtain a full rank approximation of the
covariance matrix and to prevent filter divergence. The impact of localization 
has been studied in \cite{sr:tong17} from a rigorous mathematical perspective
for high-dimensional linear systems.

Given the evolution equations (\ref{EKB1b}), one can derive associated evolution equations for the ensemble mean, $\bar x_t^M$, and the ensemble covariance matrix, $P_t^M$.
These are given by
\begin{equation} \label{EKBm}
{\rm d}\bar x_t^M = \bar f_t^M {\rm d}t -  Q_t^M R^{-1} (\bar h_t^M{\rm d}t - {\rm d}Y_t)
\end{equation}
with $\bar f_t^M = \hat \pi_t^M[f]$ and
\begin{equation} \label{EKBP}
\begin{aligned}
\frac{{\rm d}}{{\rm d}t} P_t^M &= \frac{1}{M-1} \sum_{i=1}^M \left\{ 
(f(X_t^i)- \bar f_t)(X_t^i - \bar x_t)^{\rm T} + (X_t^i -\bar x_t)(f(X_t^i)-\bar f_t)^{\rm T} \right\} \,\,+ \\
& \qquad \quad + \,\, \left\{ D(P_t^M)^+ P_t^M + P_t^M (P_t^M)^+ D\right\}  - 
Q_t^M R^{-1} (Q_t^M)^{\rm T}.
\end{aligned}
\end{equation}

We will study the behavior of the EnKBF for fully observed processes, i.e.~$h(x)=x$ and regular 
measurement error covariance matrix $R$ in Sections \ref{sec-problem-well-posedness} and 
\ref{sec-accuracy}. More specifically, it is shown in Section \ref{sec-problem-well-posedness} 
that strong solutions of (\ref{EKB1b}) exist for all times and are unique. This result implies 
that catastrophic filter divergence \cite{KMT15} cannot arise under the setting considered in this 
paper. Next uniform-in-time stability and accuracy of (\ref{EKB1b}) are  proven in Section 
\ref{sec-accuracy} under the additional assumption that $R = \epsilon I$, $\varepsilon > 0$ 
sufficiently small, and that $M > N_x$, i.e., the empirical covariance matrix $P_t^M$ 
is invertible. 
In Sections \ref{sec-linear} and \ref{sec-asymptotic}, we return to the filtering problem for 
general observation operator, $h$, 
and measurement error covariance matrix $R$. It is demonstrated in Section \ref{sec-linear} that
in the case of linear systems, (\ref{EKBm}) and (\ref{EKBP}) are consistent with the classic 
Kalman-Bucy filtering equations \cite{sr:jazwinski}. 

Note that this does not imply that the empirical distribution of the extended ensemble Kalman-Bucy 
filter is asymptotically normal. In fact, we will identify in Section \ref{sec-asymptotic} its 
asymptotic distribution for $M\to\infty$. To this end we will prove in Theorem \ref{MFConvergence} 
that the ensemble $X_t^i$, $1\le i\le M$, converges as $M\to\infty$ to independent solutions  
$\hat{X}_t^i$, $i=1, 2, 3, \ldots$, of the following McKean-Vlasov equation  
\begin{equation} \label{EKB1a}
{\rm d} \hat X_t = f(\hat X_t){\rm d}t + D ({\cal P}_t)^{-1} (\hat X_t - \bar x_t){\rm d}t - 
\frac{1}{2} {\cal Q}_t R^{-1} \left( h(\hat X_t){\rm d}t + \bar h_t {\rm d}t  
- 2{\rm d}Y_t \right),
\end{equation}
with $\bar x_t = \hat \pi_t [x]$, $h_t = \hat \pi_t [h]$,
\begin{equation} \label{EKB2a}
{\cal P}_t = \mbox{Cov}\,(\hat X_t,\hat X_t), \qquad {\cal Q}_t  
= \mbox{Cov}  \,(\hat X_t,h(\hat X_t)) \, .
\end{equation}
Here $\hat \pi_t$ denotes the distribution of $\hat X_t$.  

Using It\^o's formula, it is then easy to derive from \eqref{EKB1a} the weak formulation of the 
nonlinear Fokker-Planck equation driving the distribution $\hat \pi_t$ of $\hat{X}_t$  
\begin{equation} 
\label{limit1}
\begin{aligned}
{\rm d} \hat  \pi_t [g] &= \hat \pi_t \left[ \nabla g \cdot \left\{ f{\rm d}t + D {\cal P}^{-1}_t 
(x- \hat \pi_t [x]){\rm d}t - \frac{1}{2} {\cal Q}_t R^{-1} (h(x){\rm d}t +
\hat \pi_t [h]{\rm d}t - 2{\rm d}Y_t ) \right\}\right] \\
&  \qquad + \,\hat \pi_t \left[ \frac{1}{2} \nabla\cdot {\cal Q}_t R^{-1} 
{\cal Q}_t^{\rm T} \nabla g \,{\rm d} t \right] \, .
\end{aligned}
\end{equation}
Note the difference between \eqref{limit1} and the Kushner-Zakai equation \eqref{KS}. 

Some numerical results, supporting our theoretical estimates, will be
presented in Section \ref{sec-numerics} using a stochastically perturbed Lorenz-63 system \cite{sr:lorenz63,sr:stuart15}.

%
\section{Well-posedness of the ensemble Kalman-Bucy filter for fully observed processes} \label{sec-problem-well-posedness}
%

In this section, we specify the problem setting which is investigated in detail in this paper. We will also derive a first well-posedness result for the system (\ref{EKB1b})--(\ref{EKB3b}) implying that
the filter is not subject to catastrophic filter divergence. More specifically, we assume that the process is fully observed, i.e.~$h(x) = x$, that the diffusion tensor $D$ has full rank, and that the drift function $f$ is globally Lipschitz continuous. Since the ensemble size, $M$, will be fixed in this section, we also drop the superscript $M$ in (\ref{EKB1b}). Hence (\ref{EKB1b}) is replaced by
\begin{equation} \label{EKB1c}
{\rm d}X_t^i = f(X_t^i){\rm d}t  + D P_t^{+} (X_t^i - \bar x_t){\rm d}t - \frac{1}{2} P_t R^{-1} 
\left( X_t^i{\rm d}t + \bar x_t {\rm d}t - 2{\rm d}Y_t \right)\,,
\end{equation}
$i=1,\ldots,M$. The standard inner product in $R^{N_x}$ will be denoted 
by $\langle \cdot,\cdot \rangle$ and we recall that
\begin{equation}
\langle a,b\rangle = \mbox{tr}\,(b a^{\rm T}).
\end{equation}
Hence we quickly verify that
\begin{equation}
\frac{1}{M-1} \sum_{i=1}^M \langle X_t^i-\bar x_t,DP_t^{+} (X_t^i-\bar x_t)\rangle = \mbox{tr} \,(DP_t^{+} P_t) 
\end{equation}
and
\begin{equation}
\frac{1}{M-1} \sum_{i=1}^M \langle X_t^i-\bar x_t,P_t R^{-1}(X_t^i-\bar x_t)\rangle  
= \mbox{tr} \,(P_t R^{-1} P_t) = \|R^{-\frac 12} P_t\|^2_{\rm F} .
\end{equation} 
Here $\|A\|_{\rm F}$ denotes the Frobenius norm of a matrix $A$. We also introduce the notation $\langle A,B\rangle = \mbox{tr}\,
(B A^{\rm T})$, i.e.~$\|A\|_{\rm F}^2 = \langle A,A\rangle$.

We now investigate the $l_2$-norm of the ensemble deviations from the mean, i.e.
\begin{equation} \label{l2norm}
V_t = \frac{1}{M-1} \sum_{i=1}^M \|X^i_t - \bar x_t\|^2,
\end{equation}
which satisfies the evolution equation
\begin{equation} \label{l2-norm1}
\begin{aligned}
\frac{1}{2} \frac{{\rm d}V_t}{{\rm d}t} &= \frac{1}{M-1} \sum_{i=1}^M \langle X_t^i-\bar x_t,f(X_t^i)-\bar f_t\rangle +
\frac{1}{M-1} \sum_{i=1}^M \langle X_t^i-\bar x_t,DP_t^{+} (X_t^i-\bar x_t)\rangle 
\\ & \qquad \quad -\,\,
\frac{1}{2\varepsilon} \frac{1}{M-1} \sum_{i=1}^M \langle X_t^i-\bar x_t,P_t R^{-1} 
 (X_t^i-\bar x_t)\rangle \\
& = \frac{1}{M-1} \sum_{i=1}^M \langle X_t^i-\bar x_t,f(X_t^i) -  \bar f_t \rangle  + \mbox{tr}\,(DP_t^{+} P_t) - 
\frac{1}{2} \|R^{-\frac 12} P_t\|^2_{\rm F} \\
& =  \frac{1}{M-1} \sum_{i=1}^M \langle X_t^i-\bar x_t,f(X_t^i)-  f(\bar x_t)  \rangle  + \mbox{tr}\,(DP_t^{+} P_t) - 
\frac{1}{2} \|R^{-\frac 12} P_t\|^2_{\rm F}\, . 
\end{aligned}
\end{equation}
Here we have used 
\begin{equation}
\sum_i \langle X_t^i - \bar x_t , f(\bar x_t) - \bar f_t \rangle = 0
\end{equation} 
and that the evolution equation (\ref{EKBm}) for the mean, $\bar x_t$, reduces to
\begin{equation}\label{EKBm2}
{\rm d}\bar x_t = \bar f_t {\rm d}t -  P_t R^{-1}(\bar x_t {\rm d}t - {\rm d}Y_t)
\end{equation}
in our setting.

\begin{lemma}
\label{ControlFrobeniusNorm}
The Frobenius norm of $P_t$ satisfies
\begin{equation}
\frac{1}{\sqrt{M}} V_t \le \|P_t\|_{\rm F} \le V_t\,.
\end{equation}
\end{lemma}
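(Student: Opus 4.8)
The plan is to notice that the quantity $V_t$ is nothing but the trace of $P_t$, so that both sides of the asserted inequality are elementary functions of the eigenvalues of $P_t$, and then to invoke two standard norm comparisons. Concretely, from \eqref{EKB2b} and \eqref{l2norm},
\begin{equation}
V_t = \frac{1}{M-1}\sum_{i=1}^M \langle X_t^i - \bar x_t, X_t^i - \bar x_t\rangle
= \mathrm{tr}\left( \frac{1}{M-1}\sum_{i=1}^M (X_t^i - \bar x_t)(X_t^i-\bar x_t)^{\rm T}\right) = \mathrm{tr}\,(P_t),
\end{equation}
and $P_t$ is symmetric and positive semi-definite. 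Writing $\lambda_1,\ldots,\lambda_{N_x}\ge 0$ for its eigenvalues, one has $V_t = \mathrm{tr}\,(P_t) = \sum_j \lambda_j$ and $\|P_t\|_{\rm F}^2 = \mathrm{tr}\,(P_t^2) = \sum_j \lambda_j^2$, so the statement becomes the double inequality $M^{-1/2}\sum_j\lambda_j \le (\sum_j\lambda_j^2)^{1/2}\le \sum_j\lambda_j$ for nonnegative reals.

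For the upper bound I would simply use nonnegativity of the eigenvalues:
\begin{equation}
\|P_t\|_{\rm F}^2 = \sum_j \lambda_j^2 \le \sum_j \lambda_j^2 + 2\sum_{j<k}\lambda_j\lambda_k = \Bigl(\sum_j \lambda_j\Bigr)^2 = V_t^2,
\end{equation}
hence $\|P_t\|_{\rm F}\le V_t$. For the lower bound I would exploit that $P_t$ is a sum of $M$ rank-one matrices and, because of $\sum_i (X_t^i-\bar x_t)=0$, actually has rank at most $M-1$; in any case at most $M$ of the $\lambda_j$ are nonzero. Applying the Cauchy--Schwarz inequality to the vector of those (at most $M$) nonzero eigenvalues gives
\begin{equation}
V_t = \sum_j \lambda_j \le \sqrt{M}\,\Bigl(\sum_j \lambda_j^2\Bigr)^{1/2} = \sqrt{M}\,\|P_t\|_{\rm F},
\end{equation}
which is exactly the claimed lower estimate.

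I do not anticipate a real obstacle here: the argument is entirely elementary once $V_t$ is identified with $\mathrm{tr}\,(P_t)$. The only step that needs a word of justification is the rank bound on $P_t$ in the lower estimate, and even the crude observation $\mathrm{rank}(P_t)\le M$ is enough for the constant $1/\sqrt{M}$ as stated (the sharper rank bound $M-1$ would give the marginally stronger constant $1/\sqrt{M-1}$).
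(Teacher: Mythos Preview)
Your argument is correct. The identification $V_t=\mathrm{tr}\,(P_t)$ reduces the lemma to the pair of standard inequalities $\sum_j\lambda_j^2\le(\sum_j\lambda_j)^2$ and $(\sum_j\lambda_j)^2\le M\sum_j\lambda_j^2$ for nonnegative $\lambda_j$, where the second uses that $P_t$ has rank at most $M$; both steps are clean and need no further justification.

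The paper takes a different, purely sample-based route: it starts from the Gram-type identity
\[
\|P_t\|_{\rm F}^2=\frac{1}{(M-1)^2}\sum_{i,j}\langle X_t^i-\bar x_t,\,X_t^j-\bar x_t\rangle^2,
\]
obtains the upper bound by applying Cauchy--Schwarz to each inner product, and the lower bound by dropping the off-diagonal terms and then applying Cauchy--Schwarz to the $M$ diagonal summands. Your spectral argument is more conceptual and arguably shorter, and it also makes transparent why the constant is $1/\sqrt{M}$ (and that $1/\sqrt{M-1}$ would in fact suffice, via $\mathrm{rank}\,P_t\le M-1$); the paper's approach, on the other hand, avoids any reference to eigenvalues and stays entirely at the level of the ensemble members, which fits the hands-on style of the surrounding computations. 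Both arrive at the same constant.
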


\begin{proof} 
We first note the following identity: 
\begin{equation}
\|P_t\|_{\rm F}^2 
 = \frac 1{(M-1)^2} \sum_{i,j}  
\langle X_t^i - \bar x_t , X_t^j - \bar x_t \rangle^2 \, . 
\end{equation} 
For the proof of the upper bound it is now sufficient to observe that 
\begin{equation}
\begin{aligned}
\frac 1{(M-1)^2} \sum_{i,j}  
\langle X_t^i - \bar x_t , X_t^j - \bar x_t \rangle^2
& \le \frac 1{(M-1)^2} \sum_{i,j}  
\|X_t^i - \bar x_t\|^2 \| X_t^j - \bar x_t\|^2\\
&
= \left( \frac 1{M-1} \sum_i \|X_t^i - \bar x_t\|^2\right)^2\, .
\end{aligned}
\end{equation}
For the proof of the lower bound observe that 
\begin{equation}
\frac 1{(M-1)^2} \sum_{i,j}  
\langle X_t^i - \bar x_t , X_t^j - \bar x_t \rangle^2 
\ge \frac 1{(M-1)^2} \sum_{i}  \|X_t^i - \bar x_t\|^4 
\ge \frac 1M \left( \frac 1{M-1} \sum_i \|X_t^i - \bar x_t\|^2\right)^2\, .
\end{equation}
\end{proof}

\begin{remark} We recall the  standard relations between the Frobenius and the spectral norm of a matrix $A$, i.e.,
\begin{equation}
\|A\| \le \|A\|_{\rm F} 
\end{equation}
and
\begin{equation}
 \|A\|_{\rm F} \le \sqrt{N_x} \|A\|\,.
\end{equation} 
\end{remark}

We are now ready to obtain uniform-in-time upper and lower bounds on $V_t$. First, we can estimate the first term of \eqref{l2-norm1} from above and from below as follows: 
\begin{equation}
\begin{aligned} 
 \frac 1{M-1} \sum_i \langle X_t^i - \bar x_t , f(X_t^i) - f(\bar x_t) \rangle 
& \le L_+ \frac 1{M-1} \sum_i \| X_t^i - \bar x_t \|^2 = L_+ V_t  
\end{aligned} 
\end{equation}
and
\begin{equation}
\begin{aligned} 
\frac 1{M-1} \sum_i \langle X_t^i - \bar x_t , f(X_t^i) - f(\bar x_t) \rangle 
 \ge L_- \frac 1{M-1} \sum_i \| X_t^i - \bar x_t \|^2 = L_- V_t  ,
\end{aligned} 
\end{equation}
respectively, where 
\begin{equation}
\begin{aligned} 
L_+  := \sup_{x\neq y} \frac{\langle f(x) - f(y) , x-y\rangle}{\|x-y\|^2} \qquad 
L_-  := \inf_{x\neq y} \frac{\langle f(x) - f(y) , x-y\rangle}{\|x-y\|^2} 
\end{aligned} 
\end{equation}
are the upper and lower control on the \lq\lq dissipativity\rq\rq{} constant of $f$. 
We clearly have $L_+ \le \|f\|_{\rm Lip}$ and $L_-\ge - \|f\|_{\rm Lip}$ for globally Lipschitz continuous $f$ with
Lipschitz constant $\|f\|_{\rm Lip}$. Provided $V_t\not=0$, we also find that
\begin{equation}
\lambda^{\rm min}(D)  \le \mbox{tr}\,(DP_t^{+}P_t) \le {\rm tr} \,(D)\,.
\end{equation}
Here, $\lambda^{\min}(A)$ and $\lambda^{\max}(A)$ denote the smallest and 
largest singular values of a matrix $A$, respectively. 

Finally, the third term in \eqref{l2-norm1} can be estimated from above and from below using  
\begin{equation} 
\label{ControlFrobeniusNormGeneralLowerBound} 
\lambda^{\min} (R^{-1})\|P_t\|^2_{\rm F} = \mbox{tr} (P_t \lambda^{\min} (R^{-1}) P_t) 
\le \mbox{tr} (P_t R^{-1} P_t) = \|R^{-\frac 12} P_t\|_{\rm F}^2 
\end{equation} 
and 
\begin{equation} 
\label{ControlFrobeniusNormGeneralUpperBound} 
\|R^{-\frac 12} P_t\|_{\rm F}^2  = \mbox{tr} (P_t R^{-1} P_t) 
\le  \mbox{tr} (P_t \lambda^{\max} (R^{-1}) P_t) 
= \lambda^{\max} (R^{-1}) \,\mbox{tr} (P_t ^2) 
= (\lambda^{\min} (R))^{-1} \|P_t\|^2_{\rm F}\, , 
\end{equation} 
which follow from the inequalities $P_t (\lambda^{\min} (R^{-1}) P_t\le P_t R^{-1}P_t \le 
P_t \lambda^{\max} (R^{-1} ) P_t$, where $\le$ is meant in the sense of (symmetric) positive 
(semi-) definite matrices. 

Inserting these estimates and the previous two identities into \eqref{l2-norm1} we first obtain the upper bound  
\begin{equation} 
\label{ControlDifferentialL2NormUpperBoundGeneral} 
\frac 12 \frac{{\rm d}V_t}{{\rm d}t} \le L_+ V_t + {\rm tr} \,(D)
- \frac{\lambda^{\min} (R^{-1})}{2M} V_t^2 \, . 
\end{equation} 
This implies that $V_t \le \max\{V_0, \lambda^{\max} (R) L_+ M  
+ \sqrt{\left(  \lambda^{\max} (R) ML_+ \right)^2 + 2\lambda^{\max} (R) M \,{\rm tr} \,(D)}\}$ 
uniformly in $t$. Similarly, we obtain the lower bound  
\begin{equation} 
\label{ControlDifferentialL2NormLowerBoundGeneral} 
\frac 12 \frac{{\rm d}V_t}{{\rm d}t} \ge L_- V_t + \lambda^{\rm min}(D) 
- \frac{\lambda^{\max} (R^{-1})}{2} V_t^2 \, ,
\end{equation} 
which implies that $V_t \ge \min\{V_0, \lambda^{\min} (R) L_-  
+ \sqrt{\left( \lambda^{\min} (R)L_-\right)^2 + 2\lambda^{\min} (R) \lambda^{\rm min}(D)}\}$ 
uniformly in $t$ and $V_t >0$ provided $V_0>0$.

\begin{theorem} 
Assume that the drift term $f$ in (\ref{sde1}) is globally Lipschitz continuous and satisfies a linear growth condition
\begin{equation}
\|f(x)\| \le \tilde c_1 (1+\|x\|)
\end{equation}
for an appropriate $\tilde c_1>0$. 
Then the system (\ref{EKB1c}) together with (\ref{EKB2b})-(\ref{EKB3b}) possesses strong solutions for all times $t\ge 0$. 
\end{theorem}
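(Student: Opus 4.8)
\medskip
\noindent\emph{Sketch of the argument.} The plan is to combine the a priori estimates already derived above with a localisation argument; the only genuine difficulty is the generalised inverse $P_t^{+}$, and its control is exactly where the full-rank assumption on $D$ is used.

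First I would record what the preceding estimates give for free. The system (\ref{EKB1c}) splits into the equation (\ref{EKBm2}) for the mean and the closed ordinary differential system
\begin{equation*}
\dot\delta_t^i = \big(f(X_t^i)-\bar f_t\big) + D P_t^{+}\delta_t^i - \tfrac12 P_t R^{-1}\delta_t^i , \qquad \delta_t^i := X_t^i - \bar x_t ,
\end{equation*}
for the deviations, the two being coupled only through $f$ while the observation increment ${\rm d}Y_t$ enters the mean equation alone. By (\ref{ControlDifferentialL2NormUpperBoundGeneral})--(\ref{ControlDifferentialL2NormLowerBoundGeneral}) the quantity $V_t$ of (\ref{l2norm}) stays in a fixed interval $[\underline v,\overline v]$ for all $t\ge 0$, with $\underline v>0$ whenever $V_0>0$ (the degenerate case $V_0=0$ is trivial: then $P_t\equiv 0$, hence $P_t^{+}\equiv 0$, and each $X_t^i$ solves the globally solvable ODE $\dot X_t^i=f(X_t^i)$). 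Hence, by Lemma \ref{ControlFrobeniusNorm}, $\|P_t\|_{\rm F}$ is bounded above and below, each $\|\delta_t^i\|\le\sqrt{(M-1)\overline v}$, and $\|P_t R^{-1}\|\le\overline v\,\lambda^{\max}(R^{-1})$. Inserting these bounds, the linear growth estimate $\|\bar f_t\|\le\tilde c_1\big(1+\|\bar x_t\|+\sqrt{(M-1)\overline v}\big)$, and the finiteness of $\mathbb E\big(\sup_{s\le T}\|X_s\|^2\big)$ for the signal driving ${\rm d}Y_t$ into It\^o's formula for $\|\bar x_t\|^2$ followed by Gr\"onwall's inequality shows that $\bar x_t$, and therefore the whole ensemble, does not explode in finite time.

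The core of the argument is to bound $P_t^{+}$. Since $P_t^{+}$ is applied only to $\delta_t^i\in\mathrm{range}\,P_t$, where it coincides with the inverse of the restriction $\tilde P_t:=P_t|_{\mathrm{range}\,P_t}$, what must be controlled is the smallest non-zero eigenvalue $\mu_t$ of $P_t$. This is where full rank of $D$ enters: with $\Pi_t$ the orthogonal projection onto $\mathrm{range}\,P_t$ one has $P_t^{+}P_t=P_tP_t^{+}=\Pi_t$, and $\langle D v,v\rangle\ge\lambda^{\min}(D)\|v\|^2$ for every $v\in\mathrm{range}\,P_t$, so on $\mathrm{range}\,P_t$ the inflation term $D P_t^{+}P_t+P_tP_t^{+}D$ in (\ref{EKBP}) acts as a uniformly positive forcing $2\langle D v,v\rangle\ge 2\lambda^{\min}(D)$, while the measurement term contributes $\langle P_tR^{-1}P_t v,v\rangle\le\lambda^{\max}(R^{-1})\mu_t^2$ and the drift term is bounded on the smallest eigendirection by $\|f\|_{\rm Lip}\sqrt{\overline v\,\mu_t}$ (using $\sum_i\langle\delta_t^i,v\rangle^2=(M-1)\mu_t$). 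On any interval of constant rank, tracking $\mu_t$ therefore yields a scalar Riccati-type differential inequality
\begin{equation*}
\dot\mu_t \ge 2\lambda^{\min}(D) - 2\|f\|_{\rm Lip}\sqrt{\overline v\,\mu_t} - \lambda^{\max}(R^{-1})\,\mu_t^2 ,
\end{equation*}
which keeps $\mu_t$, and hence $\|P_t^{+}\|$, bounded uniformly in $t$; on such an interval the coefficients of (\ref{EKB1c}) are smooth in the configuration, so a unique strong solution exists.

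It remains to glue the strata together. The rank of $P_t$ takes values in $\{0,1,\dots,\min\{M-1,N_x\}\}$, and the inequality above shows it can never collapse — a vanishing eigenvalue is pushed back up by the full-rank inflation — so the rank is non-decreasing along the flow and changes at most $\min\{M-1,N_x\}$ times. At each such change the rank increases, and the accompanying singularity of $P_t^{+}$ is benign, the same inflation driving a newly opened direction away from degeneracy instantaneously, as in the scalar model $\dot\eta=c/\eta$, $\eta(0)=0$, $c>0$, which has the solution $\eta(t)=\sqrt{2ct}$. Hence the solution extends across each such instant, and concatenation yields a strong solution of (\ref{EKB1c})--(\ref{EKB3b}) on $[0,\infty)$. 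I expect this last step — making rigorous that the solution passes through the times at which $\mathrm{rank}\,P_t$ jumps, e.g.\ by replacing $P_t^{+}$ by $(P_t+\rho I)^{-1}$ and letting $\rho\downarrow 0$, or by a local change of variables near a stratum boundary — to be the main technical obstacle; the remaining ingredients are routine applications of the a priori bounds together with standard ODE and SDE existence theory.
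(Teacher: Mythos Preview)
Your overall architecture coincides with the paper's: split \eqref{EKB1c} into the deviation system for $\delta_t^i=X_t^i-\bar x_t$ and the mean equation \eqref{EKBm2}, invoke the a~priori two-sided bound on $V_t$ from \eqref{ControlDifferentialL2NormUpperBoundGeneral}--\eqref{ControlDifferentialL2NormLowerBoundGeneral} to confine the deviations, observe that $\|P_t\|$ is then bounded so the drift in \eqref{EKBm2} has linear growth, and appeal to standard SDE existence for $\bar x_t$. That is exactly the paper's proof.

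Where you differ is in how much you say about the deviation system. The paper's argument is a single sentence: ``since $V_t$ remains bounded, the equations in $X_t^i-\bar x_t$ are well-posed.'' It does not bound $\|P_t^{+}\|$, does not derive a Riccati inequality for the smallest nonzero eigenvalue, and does not discuss rank changes at all. Your additional work --- the lower differential inequality for $\mu_t$, the observation that rank can only increase, the discussion of passing through a stratum boundary --- is not in the paper, and is arguably what one needs to make that one-sentence claim rigorous (bounded $V_t$ controls $\sum_i\|\delta_t^i\|^2$ but not the local Lipschitz constant of the vector field, which involves $P_t^{+}$). So your proposal is not a different route so much as a substantially more careful execution of the same route.

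One caution on the heuristic you give for crossing a rank jump. The scalar model $\dot\eta=c/\eta$ is suggestive but does not literally describe the new eigenvalue: at a rank increase the right-hand side of the $P_t$-equation itself jumps (because $\Pi_t=P_t^{+}P_t$ changes rank), so the Taylor picture is delicate, and whether the induced singularity in $DP_t^{+}\delta_t^i$ is integrable depends on the precise growth of $\nu_t$ just after the jump. Your instinct that the regularisation $(P_t+\rho I)^{-1}\to P_t^{+}$ is the honest way to handle this is sound; I would lean on that rather than on the scalar caricature.
Your remaining steps --- the Gr\"onwall estimate for $\|\bar x_t\|^2$ and the linear-growth bound on $\bar f_t$ --- match the paper exactly.
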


\begin{proof}
We can decompose the equations (\ref{EKB1c}) into ordinary differential equations in $X_t^i-\bar x_t$, $i=1,\ldots,M$ and Equation
(\ref{EKBm2}) for the mean, $\bar x_t$. Since the $l_2$-norm, $V_t$, remains bounded, the equations in $X_t^i - \bar x_t$ are well-posed. 
Furthermore, since $\|P_t\|$ remains bounded as well, the combined drift term in (\ref{EKBm2}), written as
\begin{equation}
{\rm d}\bar x_t = f(\bar x_t) {\rm d}t  + b_t {\rm d}t -   P_t R^{-1} (\bar x_t {\rm d}t - {\rm d}Y_t),
\end{equation}
with $b_t = \bar f_t - f(\bar x_t)$, is Lipschitz continuous in $\bar x_t$ and, hence, satisfies a 
linear growth condition, i.e.
\begin{equation}
\begin{aligned}
\| f(\bar x_t) + b_t - P_t R^{-1}\bar x_t\| \le \| f(\bar x_t) - P_t R^{-1}\bar x_t\| + \|\bar f_t - f(\bar x_t)\| \le \tilde c_2  ( 1 + \|\bar x_t\| )
\end{aligned}
\end{equation}
for an appropriate $\tilde c_2 >0$, and, hence, strong solutions to (\ref{EKBm2}) exist for all times \cite{sr:Oksendal}.
\end{proof}

\begin{remark} \label{UnifMBds}
For the analysis of the asymptotic behavior of $M\to\infty$ the upper bound on $V_t$ is not sufficient, because it diverges as $M\to\infty$. However, since 
we need a control only locally in time, we can use \eqref{ControlDifferentialL2NormUpperBoundGeneral} 
to estimate $\frac 12 \frac{d}{dt} V_t \le L_+ V_t + {\rm tr} \,(D)$ 
which implies the upper bound 
\begin{equation} 
\label{UniformMUpperBound} 
V_t\le e^{2L_+ t} \left( V_0 + \frac{{\rm tr} \,(D)}{L_+} \right)\, , 
\end{equation}  
which becomes uniform in $M$ (but of course not in $t$) if the particles at time $t=0$ are chosen with uniform upper bound on $V_0 = V^M_0$. 
\end{remark}

%
\section{Accuracy of the ensemble Kalman-Bucy filter for finite ensemble sizes and small measurement noise} \label{sec-accuracy}
%

The goal of this section is to derive bounds on the estimation error
\begin{equation} \label{error-e}
e_t = X_t^{\rm ref} - \bar x_t,
\end{equation}
where $X_t^{\rm ref}$ denotes the reference trajectory of (\ref{sde1}) which generated the data. 
We again restrict the discussion to fully observed processes and globally Lipschitz-continuous drift
functions $f$. In addition, we assume the error covariance to be of the type $R = \varepsilon I$ 
with sufficiently small $\varepsilon > 0$, implying
\begin{equation}
{\rm d}Y_t = X_t^{\rm ref} {\rm d}t + \sqrt{\varepsilon} {\rm d}B_t\,,
\end{equation}
and that $P_t^M$ is invertible which necessitates that $M>N_x$. We 
drop the superscript $M$ from all relevant quantities throughout this section, 
as we are interested in the accuracy of the filter for fixed ensemble size, $M$. 

We find that the estimation error satisfies the evolution equation
\begin{equation} \label{error-de}
{\rm d} e_t = f(X_t^{\rm ref}) {\rm d} t + \sqrt{2} C {\rm d} W_t - \bar f_t {\rm d}t 
- \frac{1}{\varepsilon} P_t (e_t {\rm d} t + \varepsilon^{1/2} {\rm d} B_t).
\end{equation}
We introduce the squared estimation error norm $E_t = \|e_t\|^2/2 = \langle e_t,e_t \rangle/2$. 
Then Ito's formula implies that 
\begin{align} \nonumber
{\rm d} E_t  
& = \langle f(X_t^{\rm ref}) - \bar{f}_t, X_t^{\rm ref}- \bar x_t \rangle\, {\rm d} t - \frac 1\varepsilon 
\langle e_t , P_t e_t\rangle\, {\rm d} t \\ 
& \qquad + \langle e_t , \sqrt{2} C {\rm d} W_t\rangle -
\frac 1{\sqrt{\varepsilon}} \langle e_t,P_t {\rm d} B_t\rangle + \mbox{tr}\,(D)\,{\rm d} t + 
\frac 1{2\varepsilon} \mbox{ tr} (P_t^2)\, {\rm d} t \, , 
\end{align} 
which can be rewritten as
\begin{equation} \label{error1}
{\rm d} E_t = {\cal E}_t {\rm d} t + {\rm d}M_t 
\end{equation}
with
\begin{align}
{\cal E}_t &= \langle X_t^{\rm ref} - \bar x_t,f(X_t^{\rm ref})-\bar f_t \rangle - 
\frac{1}{\varepsilon} \langle e_t ,P_t e_t \rangle + \mbox{tr}\,(D) + \frac{1}{2\varepsilon}\|P_t\|_{\rm F}^2
\end{align}
and the martingale 
$$ 
M_t = \int_0^t \langle e_s , -\varepsilon^{-1/2} P_s{\rm d}B_s + \sqrt{2} C{\rm d} W_s\rangle \, ,\quad 
t\ge 0\, . 
$$ 

To make further progress we need bounds for the smallest and largest singular values 
$\lambda_t^{\rm min} = \lambda^{\min} (P_t)$ and $\lambda_t^{\rm max} = \lambda^{\max} (P_t)$ of 
$P_t$, respectively. An upper bound for the largest singular value has already been derived in 
Section \ref{sec-problem-well-posedness}, since $\lambda_t^{\rm max} = \|P_t\| \le  
\|P_t \|_{\rm F}\le V_t$. Since $P_t$ is assumed to be invertible, the explicit evolution equation 
for $P_t$ reduces to 
\begin{align} 
\frac d{dt}P_t  
& = \frac 1{M-1} \sum_i (f(X_t^i) - \bar{f} (t)) (X_t^i - \bar{x}_t)^T 
  +  (X_t^i - \bar{x}_t) (f(X_t^i) - \bar{f}(t))^T + 2D - \frac 1{\varepsilon} P_t^2    \,.
\end{align} 
Next we make use of the fact that $P_t$ can be diagonalized, i.e., there are orthogonal matrices 
$Q_t$ and diagonal matrices $\Lambda_t$ such that
\begin{equation}
P_t = Q_t^{\rm T} \Lambda_t Q_t.
\end{equation}
While the orthogonal matrices $Q_t$ are in general only continuous in $t$, the diagonal matrix of 
singular values can be chosen to be differentiable in $t$ \cite{sr:Rellich}. 
As shown in \cite{sr:DE06}, the evolution equation for diagonal matrix 
of eigenvalues, $\Lambda_t$, is of the form
\begin{align} \label{eq2}
\frac{{\rm d}\Lambda_t}{{\rm d}t} &= \mbox{diag}\,(Q_t U_t Q_t^{\rm T}) +  2 \mbox{diag}\,(Q_t D Q_t^{\rm T}) - \frac{1}{\varepsilon} \Lambda_t^2 
\end{align}
with
\begin{equation}
U_t :=  \frac{1}{M-1} \sum_i \left\{ f(X_t^i) - f(\bar{x}_t) \right\} 
\left\{ X_t^i- \bar{x}_t \right\}^{\rm T}   +   
\left\{ X_t^i- \bar{x}_t\right\}   \left\{ f(X_t^i) -  f(\bar{x}_t) \right\}^{\rm T}.
\end{equation}
Here ${\rm diag}\,(A)$
denotes a diagonal matrix with diagonal entries equal to the diagonal of $A$. More specifically, the diagonal entries of
$\mbox{diag}\,(Q_t U_t Q_t^{\rm T})$ are given by
\begin{equation}
\left( \mbox{diag}\,(Q_t U_t Q_t^{\rm T}) \right)_{ii} = e_i^{\rm T} Q_t U_t Q_t^{\rm T} e_i
\end{equation}
where $e_i \in \mathbb{R}^{N_x}$ denotes the $i$th basis vector in $\mathbb{R}^{N_x}$.

Next we derive the following estimate using the fact that $f$ is globally Lipschitz continuous. 
Then, given any unit vector $v$, it holds that
\begin{equation}
\begin{aligned}
 | \frac 1{M-1} \sum_i  & \langle f(X_t^i) - f(\bar x_t), v\rangle   
\langle X_t^i - \bar x_t, v\rangle | \\
& \le \left( \frac 1{M-1} \sum_i \langle f(X_t^i) - f(\bar{x}_t), v
\rangle^2\right)^{\frac 12} \left( \frac 1{M-1} \sum_i \langle X_t^i - \bar{x}_t, v
\rangle^2\right)^{\frac 12} \\
& \le \|f\|_{\rm Lip} V_t \le \|f\|_{\rm Lip}\sqrt{N_x M} \|P_t\|  \,,
\end{aligned} 
\end{equation}
where we have used $V_t \le \sqrt{N_x M} \|P_t\|$.

Hence setting $v = Q_t^{\rm T} e_i$, we obtain
\begin{equation}
|\left( \mbox{diag}\,(Q_t U_t Q_t^{\rm T}) \right)_{ii}| \le 2\|f\|_{\rm Lip}\sqrt{N_x M} \|P_t\| 
= 2 \|f\|_{\rm Lip}\sqrt{N_x M} \lambda_t^{\rm max}\,.
\end{equation}
Since $\lambda_t^{\rm max} = (\Lambda_t)_{ii}$ for some index $i$, we hence deduce that
\begin{align} \label{EstimateMax}
\frac{{\rm d}\lambda_t^{\rm max}}{{\rm d}t} &\le 2 \|f\|_{\rm Lip}\sqrt{N_x M} \lambda_t^{\rm max} + 2\lambda^{\max} (D) - \frac{(\lambda_t^{\rm max})^2}{\varepsilon} \, .
\end{align}
This implies that
\begin{equation}
\lambda_t^{\rm max} \le \max \left\{ \lambda_0^{\rm max},\varepsilon  \|f\|_{\rm Lip}\sqrt{N_x M} + \sqrt{ \varepsilon^2 \|f\|_{\rm Lip}^2 N_x M + 2\varepsilon \lambda^{\max} (D)} \right\}\,.
\end{equation}
Hence we have shown the following 

\begin{lemma} \label{lem-2} (upper bound on spectral radius of $P_t$) There is a constant 
\begin{equation}
C_1 = C_1(\|f\|_{\rm Lip},M,N_x,D,\varepsilon_0)
\end{equation}
such that
$\lambda_0^{\rm max} \le C_1 \varepsilon^{1/2}$ at initial time $t=0$ implies $\lambda_t^{\rm max} \le C_1 \varepsilon^{1/2}$ for all
times and all $\varepsilon \le \varepsilon_0$.
\end{lemma}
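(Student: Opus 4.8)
The plan is to read the claimed bound straight off the scalar differential inequality \eqref{EstimateMax} that has already been established for $\lambda_t^{\rm max}$, and then to verify that the hypothesis and the resulting a priori bound can be absorbed into a single constant of the form $C_1\varepsilon^{1/2}$ which is uniform over all $\varepsilon\le\varepsilon_0$. Concretely, I would first make the comparison step underlying the bound displayed just before the lemma fully explicit, and then carry out the elementary estimate on the relevant root of a quadratic.

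First I would write the right-hand side of \eqref{EstimateMax} as $\phi_\varepsilon(\lambda_t^{\rm max})$, where $\phi_\varepsilon(y) = a y + b - y^2/\varepsilon$ with $a := 2\|f\|_{\rm Lip}\sqrt{N_x M}$ and $b := 2\lambda^{\max}(D)$. This is a concave parabola in $y$ with a single positive root
\[
y_+(\varepsilon) = \varepsilon\|f\|_{\rm Lip}\sqrt{N_x M} + \sqrt{\varepsilon^2\|f\|_{\rm Lip}^2 N_x M + 2\varepsilon\lambda^{\max}(D)},
\]
and $\phi_\varepsilon(y)\le 0$ for all $y\ge y_+(\varepsilon)$. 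A standard scalar comparison argument then shows that the interval $[0,\max\{\lambda_0^{\rm max},y_+(\varepsilon)\}]$ is forward invariant under $\dot y = \phi_\varepsilon(y)$, so that the differential inequality \eqref{EstimateMax} yields the a priori bound $\lambda_t^{\rm max}\le\max\{\lambda_0^{\rm max},y_+(\varepsilon)\}$ for all $t\ge 0$.

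It then only remains to bound $y_+(\varepsilon)$ uniformly for $\varepsilon\le\varepsilon_0$. Using $\varepsilon = \varepsilon^{1/2}\varepsilon^{1/2}\le \varepsilon^{1/2}\varepsilon_0^{1/2}$ together with $\varepsilon^2\le\varepsilon\varepsilon_0$, I would estimate
\[
y_+(\varepsilon) \le \varepsilon^{1/2}\Bigl(\varepsilon_0^{1/2}\|f\|_{\rm Lip}\sqrt{N_x M} + \sqrt{\varepsilon_0\|f\|_{\rm Lip}^2 N_x M + 2\lambda^{\max}(D)}\Bigr) =: C_1\,\varepsilon^{1/2},
\]
where the bracketed quantity depends only on $\|f\|_{\rm Lip}$, $M$, $N_x$, $D$ (through $\lambda^{\max}(D)$) and $\varepsilon_0$, as claimed. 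Consequently, if $\lambda_0^{\rm max}\le C_1\varepsilon^{1/2}$, then $\max\{\lambda_0^{\rm max},y_+(\varepsilon)\}\le C_1\varepsilon^{1/2}$, and the comparison bound gives $\lambda_t^{\rm max}\le C_1\varepsilon^{1/2}$ for all $t\ge 0$ and all $\varepsilon\le\varepsilon_0$.

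The only point requiring genuine care — and hence the main obstacle — is the regularity justification for applying the scalar comparison principle to $t\mapsto\lambda_t^{\rm max}$. Since the orthogonal factor $Q_t$ in the diagonalization $P_t = Q_t^{\rm T}\Lambda_t Q_t$ is in general merely continuous, one cannot differentiate $\lambda_t^{\rm max}$ naively; instead one works with the differentiable diagonal $\Lambda_t$ governed by \eqref{eq2} and controls $\lambda_t^{\rm max} = \max_i(\Lambda_t)_{ii}$ through an upper Dini derivative argument, the required smoothness being supplied by \cite{sr:Rellich,sr:DE06}. Everything else in the proof is the elementary quadratic estimate above.
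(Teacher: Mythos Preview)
Your proposal is correct and follows essentially the same route as the paper: the paper simply reads off the bound $\lambda_t^{\rm max}\le\max\{\lambda_0^{\rm max},y_+(\varepsilon)\}$ from the differential inequality \eqref{EstimateMax} and then states the lemma without further comment. You have merely made explicit what the paper leaves implicit, namely the scalar comparison argument, the extraction of the $\varepsilon^{1/2}$ scaling with a concrete choice of $C_1$, and the regularity issue for $\lambda_t^{\rm max}$ handled via \cite{sr:Rellich,sr:DE06}.
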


\noindent
We now use our upper bound on $\lambda_t^{\rm max}  = \|P_t\|_2$ from Lemma \ref{lem-2} in order to get the  estimate
\begin{equation}
|\left( \mbox{diag}\,(Q_t U_t Q_t^{\rm T} \right)_{ii}| \le 2 L \sqrt{N_x M} C_1 \varepsilon^{1/2}.
\end{equation}
Hence, we deduce that
\begin{equation} \label{EstimateMin}
\frac{{\rm d} \lambda_t^{\rm min}}{{\rm d} t} \ge - 2  \|f\|_{\rm Lip}\sqrt{N_x M} C_1 \varepsilon^{1/2} + 2\lambda^{\min} (D) - 
\frac{(\lambda_t^{\rm min})^2} {\varepsilon} 
\end{equation}
and
\begin{equation}
\lambda_t^{\rm min} \ge \min \left\{ \lambda_0^{\rm min}, -\varepsilon^{3/2} \|f\|_{\rm Lip} C_1 \sqrt{N_x M} + \sqrt{ \varepsilon^3 \|f\|_{\rm Lip}^2 C_1^2 N_x M + 2\varepsilon \lambda^{\min} (D)} \right\}\,,
\end{equation}
which implies the desired lower bound on $\lambda_t^{\rm min}$. Here $\lambda^{\min} (D)$ denotes the smallest eigenvalue of $D$. 
We now fix $\varepsilon_0>0$ such that 
\begin{equation}
-\varepsilon_0^{3/2}\|f\|_{\rm Lip}  C_1 \sqrt{N_x M} + \sqrt{ \varepsilon_0^3 \|f\|_{\rm Lip}^2 C_1^2N_x M + \varepsilon_0 \lambda^{\min} (D)} > 0\,.
 \end{equation}

\begin{lemma} \label{lem-3} (lower bound on smallest singular value of $P_t$)
 There is a constant 
 \begin{equation}
 C_2 = C_2(\|f\|_{\rm Lip},M,N_x,D,\varepsilon_0)
 \end{equation} 
 such that $\lambda_0^{\rm min} \ge C_2 \varepsilon^{1/2}$ 
at initial time $t=0$ implies $\lambda_t^{\rm min} \ge C_2 \varepsilon^{1/2}$ for all $t>0$ and all $\varepsilon \le \varepsilon_0$.
\end{lemma}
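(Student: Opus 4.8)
The plan is to run a scalar barrier (differential‑inequality) argument on $\lambda_t^{\min}$; the analytic core has already been assembled in the derivation of \eqref{EstimateMin}, so what remains is essentially bookkeeping. Write the right‑hand side of \eqref{EstimateMin} as a function of $\lambda$,
\[
R(\lambda) \;:=\; 2\lambda^{\min}(D) \;-\; 2\|f\|_{\rm Lip}\sqrt{N_x M}\,C_1\,\varepsilon^{1/2} \;-\; \frac{\lambda^2}{\varepsilon}\,.
\]
For $\varepsilon_0$ sufficiently small (as fixed just above the statement) the constant term of $R$ is strictly positive for all $\varepsilon\le\varepsilon_0$, so $R$ is a strictly concave quadratic with a single positive root
\[
\lambda_*(\varepsilon) \;=\; \varepsilon^{1/2}\Big(2\lambda^{\min}(D) - 2\|f\|_{\rm Lip}\sqrt{N_x M}\,C_1\,\varepsilon^{1/2}\Big)^{1/2}\;>\;0 ,
\]
and $R(\lambda)>0$ for $\lambda\in[0,\lambda_*(\varepsilon))$. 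I would then define $C_2$ through $\lambda_*$, so the lemma reduces to proving $\lambda_t^{\min}\ge\min\{\lambda_0^{\min},\lambda_*(\varepsilon)\}$ for all $t\ge0$.

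For this invariance claim I would argue on the differentiable eigenvalue branches rather than on $\lambda_t^{\min}$ directly, in order to sidestep the fact that the minimal eigenvalue of $P_t$ is in general only locally Lipschitz ($C^1$ away from crossings only). By \cite{sr:Rellich} and \cite{sr:DE06} the eigenvalues can be organised into the diagonal matrix $\Lambda_t$ with differentiable entries $(\Lambda_t)_{jj}$ solving \eqref{eq2}; bounding each diagonal entry of $\mbox{diag}\,(Q_t U_t Q_t^{\rm T})$ in absolute value by $2\|f\|_{\rm Lip}\sqrt{N_x M}\,C_1\varepsilon^{1/2}$ (via Lemma \ref{lem-2}, exactly as in the computation preceding \eqref{EstimateMin}) and each diagonal entry of $\mbox{diag}\,(Q_t D Q_t^{\rm T})$ from below by $\lambda^{\min}(D)$ shows that $\frac{{\rm d}}{{\rm d}t}(\Lambda_t)_{jj}\ge R\big((\Lambda_t)_{jj}\big)$ for every index $j$. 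A standard barrier argument then applies to each branch separately: if $(\Lambda_t)_{jj}$ ever decreases down to the level $\lambda_*(\varepsilon)$ its derivative there is $\ge R(\lambda_*(\varepsilon))=0$, so it cannot cross below, whence $(\Lambda_t)_{jj}\ge\min\{(\Lambda_0)_{jj},\lambda_*(\varepsilon)\}$ for all $t\ge0$. Minimising over $j$ yields $\lambda_t^{\min}\ge\min\{\lambda_0^{\min},\lambda_*(\varepsilon)\}$.

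Finally I would extract the advertised $\varepsilon^{1/2}$ scaling. The map $\varepsilon\mapsto\varepsilon^{-1/2}\lambda_*(\varepsilon)=\big(2\lambda^{\min}(D) - 2\|f\|_{\rm Lip}\sqrt{N_x M}C_1\varepsilon^{1/2}\big)^{1/2}$ is non‑increasing, so for all $\varepsilon\le\varepsilon_0$ one has $\lambda_*(\varepsilon)\ge C_2\,\varepsilon^{1/2}$ with
\[
C_2 \;:=\; \big(2\lambda^{\min}(D) - 2\|f\|_{\rm Lip}\sqrt{N_x M}\,C_1\,\varepsilon_0^{1/2}\big)^{1/2}\;>\;0 ,
\]
a constant depending only on $\|f\|_{\rm Lip},M,N_x,D,\varepsilon_0$ (positivity guaranteed by the definition of $\varepsilon_0$). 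Combining this with the hypothesis $\lambda_0^{\min}\ge C_2\varepsilon^{1/2}$ gives $\min\{\lambda_0^{\min},\lambda_*(\varepsilon)\}\ge C_2\varepsilon^{1/2}$, and hence $\lambda_t^{\min}\ge C_2\varepsilon^{1/2}$ for all $t>0$ and all $\varepsilon\le\varepsilon_0$, which is the assertion.

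The only genuinely delicate point — and hence the expected main obstacle — is the time‑regularity of the spectrum of $P_t$ and making the passage from the matrix equation \eqref{eq2} to a one‑dimensional differential inequality rigorous across eigenvalue crossings; treating the problem branch‑by‑branch on the differentiable $(\Lambda_t)_{jj}$, rather than differentiating $\lambda_t^{\min}$ itself, is the clean way around it. A secondary consistency check is that the argument presupposes the a priori upper bound of Lemma \ref{lem-2}, which requires $\lambda_0^{\max}\le C_1\varepsilon^{1/2}$; one must therefore verify that $\varepsilon_0$ (and the admissible initial ensembles) can be chosen so that $C_2\varepsilon^{1/2}\le\lambda_0^{\min}\le\lambda_0^{\max}\le C_1\varepsilon^{1/2}$ is feasible, i.e. that $C_2\le C_1$, which holds after possibly shrinking $\varepsilon_0$.
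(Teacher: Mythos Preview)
Your proposal is correct and follows essentially the same route as the paper: the paper's ``proof'' is precisely the derivation of the differential inequality \eqref{EstimateMin}, the identification of the positive root of its right-hand side, and the resulting barrier-type lower bound $\lambda_t^{\min}\ge\min\{\lambda_0^{\min},\lambda_*(\varepsilon)\}$, from which the $\varepsilon^{1/2}$ scaling is read off. You are in fact more careful than the paper on two points the paper glosses over --- working branch-by-branch on the differentiable eigenvalue curves $(\Lambda_t)_{jj}$ rather than on $\lambda_t^{\min}$ directly, and making the extraction of a single $C_2$ valid for all $\varepsilon\le\varepsilon_0$ explicit via the monotonicity of $\varepsilon\mapsto\varepsilon^{-1/2}\lambda_*(\varepsilon)$ --- but the underlying argument is the same.
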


\begin{remark}
The upper and lower bounds for the largest and smallest, respectively, eigenvalue of $P_t$ depend on the 
ensemble size, $M$. This dependence can be eliminated for the price of the estimates no longer being valid uniformly in
time. We now derive such $M$-independent upper and lower bounds. Let us assume that
\begin{equation}
\|f\|_{\rm Lip} V_s \le \lambda^{\max} (D)
\end{equation}
for all $s \in [0,t]$. Such a bound can be found because of (\ref{UniformMUpperBound}) and for $\varepsilon$ sufficiently small, 
i.e.~$\varepsilon \le \varepsilon_t$. Then (\ref{EstimateMax}) implies that
\begin{equation}
\lambda_s^{\rm max} \le 2 \left(\lambda^{\max} (D) \varepsilon\right)^{1/2}
\end{equation}
for all $s \in [0,t]$ and all $\varepsilon \le \varepsilon_t$. Similarly, (\ref{EstimateMin}) implies that
\begin{equation}
\lambda_s^{\rm min} \ge \left( 2\lambda^{\min} (D) \varepsilon \right)^{1/2}\,.
\end{equation}
Hence we have traded the $M$-dependent constants $C_1$ and $C_2$ in the previous two lemmas by $M$-independent
constants $\tilde C_1 = 2\lambda^{\max} (D)^{1/2}$ and $\tilde C_2 = \lambda^{\min} (D)^{1/2}$, respectively. However, the estimates hold 
for $\varepsilon \le \varepsilon_t$ only, where the upper bound $\varepsilon_t = 
\varepsilon_t(\|f\|_{\rm Lip},D)$ decreases in time. 
\end{remark}
 
\noindent
 The upper and lower bounds of the eigenvalues  of $P_t$ obtained in the previous 
two lemmas hold with constants  $C_1$ and $C_2$ independent of the driving Wiener processes. They 
only depend on the initial conditions (which might be random), but we can impose deterministic 
bounds on the spectral radius of the covariance matrix.  
Hence we can take expectations on both sides of (\ref{error1}) in order to obtain the following 
integral inequality
\begin{equation}
\begin{aligned}
\mathbb{E}\left[ E_t\right]  &\le \mathbb{E}\left[ E_0\right] + \int_0^t  \mathbb{E} \left[ \mathcal E_s \right]\, {\rm d} s  \\
&
\le  \mathbb{E}\left[ E_0\right] + \int_0^t  \mathbb{E} \left[ \mbox{tr}\,(D) + N_x C_1^2  + 2 \varepsilon^{1/4}  \|f\|_{\rm Lip} C_1^{1/2} 
(N_x M)^{1/4} E_s^{1/2} -  2\frac{C_2 - \varepsilon^{1/2} \|f\|_{\rm Lip}}{\varepsilon^{1/2}} E_s\right]  \, {\rm d}s  \, ,
\end{aligned}
\end{equation}
where we used
\begin{equation}
\begin{aligned}
\langle f(X_t^{\rm ref}) - \bar f_t,X_t^{\rm ref} - \bar x_t\rangle 
& = \langle f(X_t^{\rm ref})- f(\bar x_t),X_t^{\rm ref} - \bar x_t\rangle +
\langle f(\bar x_t) - \bar f_t,X_t^{\rm ref} - \bar x_t\rangle \\
& \le 2 L_+ E_t  + \sqrt{2} \|f\|_{\rm Lip} V_t^{1/2} E_t^{1/2} \\ 
& \le 2 \|f\|_{\rm Lip}\left( E_t + \varepsilon^{1/4} C_1^{1/2} (MN_x)^{1/4} E_t^{1/2} \right)\,.
\end{aligned}
\end{equation}
The next step is to close the right hand side in $\mathbb{E}[E_s]$. To this end, we first derive the following $\omega$-wise estimate 
\begin{equation} 
\label{est_omega_wise}
\begin{aligned} 
\mathcal E_s &\le \left( \mbox{tr}\,(D) + N_x C_1^2  + 2\varepsilon^{1/4}  \|f\|_{\rm Lip} C_1^{1/2} (N_x M)^{1/4}  E_s^{1/2} - 
2\frac{C_2 - \varepsilon^{1/2} \|f\|_{\rm Lip}}{\varepsilon^{1/2}} E_s\right)  \\
& \le C_3 + \varepsilon^{1/4}  C_4 E_s^{1/2} - 
2\frac{C_2 - \varepsilon^{1/2} \|f\|_{\rm Lip}}{\varepsilon^{1/2}} E_s  \\
& \le \left( C_3 + \varepsilon \frac{C_4^2}{C_2}\right) 
  - \frac{C_2 - 2\varepsilon^{1/2}\|f\|_{\rm Lip} }{\varepsilon^{1/2}} E_s \\
& =: \Phi \left(  E_s\right)  
\end{aligned} 
\end{equation} 
for $C_3 = \mbox{tr}\,(D) + N_x C_1^2$, $C_4 = 2\|f\|_{\rm Lip}C_1^{1/2} (N_x M)^{1/4}$, and a  linear function $\Phi (E_s)$. Taking expectations and using
$\mathbb{E}\left[\Phi(E_s)\right] = \Phi \left(\mathbb{E}[E_s]\right)$ we arrive at the integral inequality 
\begin{equation}
\mathbb{E}\left[ E_t\right]  \le \mathbb{E}\left[ E_0\right] + \int_0^t  \Phi\left(  \mathbb{E}\left[ E_s\right]\right) \, {\rm d}s 
\end{equation}
and we can now apply the Gronwall lemma or comparison techniques for integral inequalities. More 
precisely, let $\alpha = \varepsilon^{-1/2}(C_2 - 2\varepsilon^{1/2} \|f\|_{\rm Lip})>0$, then the time-dependent Ito's-formula implies 
that 
\begin{equation}
e^{\alpha t} \mathbb{E}\left[ E_t\right] 
\le \mathbb{E}\left[ E_0\right] + \int_0^t e^{\alpha s} \left( C_3 + \varepsilon
\frac{C_4^2}{C_2}\right) \,{\rm d} s  
\end{equation}
and, hence,  
\begin{equation}
\mathbb{E}\left[ E_t\right] 
\le e^{-\alpha t} \mathbb{E}\left[ E_0\right] + \alpha^{-1} K
\end{equation}
with $K:=  C_3 + \varepsilon \frac{C_4^2}{C_2}$. Note that $\alpha^{-1} = 
\mathcal {O} (\varepsilon^{\frac 12} )$. Hence we have shown the following

\begin{theorem}
\label{thm:MSE} 
(estimation error) If the measurement error variance $\varepsilon$ is chosen 
sufficiently small, the initial ensemble is chosen such that $P_0$ is invertible
and the bounds of Lemmas \ref{lem-2} and \ref{lem-3} are satisfied at initial time, then the  
mean squared estimation error  is of order $\varepsilon^{1/2}$ asymptotically in time.  
\end{theorem}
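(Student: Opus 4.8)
The plan is to assemble the pathwise estimates established above into a single linear integral inequality for $\mathbb{E}[E_t]$ and then close it by a Gr\"onwall-type comparison. The starting point is the It\^o expansion \eqref{error1}, $\mathrm{d}E_t = \mathcal{E}_t\,\mathrm{d}t + \mathrm{d}M_t$, of the half squared error $E_t = \|e_t\|^2/2$ associated with \eqref{error-de}. Since $V_t$ is bounded uniformly in time by Section \ref{sec-problem-well-posedness}, and hence so is $\|P_t\|$, while $\bar x_t$ and $X_t^{\rm ref}$ have finite moments on bounded intervals thanks to the linear growth of $f$, the local martingale $M_t$ is in fact a genuine martingale, so $\mathbb{E}[M_t]=0$ and one may take expectations in \eqref{error1}.

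Next I would feed the spectral bounds of Lemmas \ref{lem-2} and \ref{lem-3}, namely $\lambda_t^{\rm min}\ge C_2\varepsilon^{1/2}$ and $\lambda_t^{\rm max}\le C_1\varepsilon^{1/2}$ --- valid for all $t\ge0$ once they hold at $t=0$ and $\varepsilon\le\varepsilon_0$ --- into the three contributions to $\mathcal{E}_t$. The dissipation term is bounded above by $-\varepsilon^{-1}\lambda_t^{\rm min}\|e_t\|^2\le -2C_2\varepsilon^{-1/2}E_t$; the diffusion term $\frac{1}{2\varepsilon}\|P_t\|_{\rm F}^2$ by a quantity of order $N_xC_1^2=\mathcal{O}(1)$ via $\|P_t\|_{\rm F}\le\sqrt{N_x}\,\lambda_t^{\rm max}$; and the drift term, after splitting $f(X_t^{\rm ref})-\bar f_t=(f(X_t^{\rm ref})-f(\bar x_t))+(f(\bar x_t)-\bar f_t)$, by $2L_+E_t$ plus a cross term controlled via $\sqrt{2}\,\|f\|_{\rm Lip}V_t^{1/2}E_t^{1/2}$ together with $V_t\le\sqrt{N_xM}\,\lambda_t^{\rm max}\le\sqrt{N_xM}\,C_1\varepsilon^{1/2}$. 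Crucially $C_1$ and $C_2$ are deterministic --- they depend only on the initial data, on which we impose deterministic bounds --- so a Young inequality absorbing the $E_t^{1/2}$ cross term produces the \emph{affine} pathwise majorant $\mathcal{E}_s\le\Phi(E_s):=(C_3+\varepsilon C_4^2/C_2)-\alpha E_s$ with $C_3=\mathrm{tr}\,(D)+N_xC_1^2$, $C_4=2\|f\|_{\rm Lip}C_1^{1/2}(N_xM)^{1/4}$ and $\alpha=\varepsilon^{-1/2}(C_2-2\varepsilon^{1/2}\|f\|_{\rm Lip})$, which is strictly positive for $\varepsilon$ small.

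Affineness of $\Phi$ is exactly what makes the closure legitimate: $\mathbb{E}[\Phi(E_s)]=\Phi(\mathbb{E}[E_s])$, so taking expectations gives $\mathbb{E}[E_t]\le\mathbb{E}[E_0]+\int_0^t\Phi(\mathbb{E}[E_s])\,\mathrm{d}s$. Multiplying by the integrating factor $e^{\alpha t}$ and integrating yields $\mathbb{E}[E_t]\le e^{-\alpha t}\mathbb{E}[E_0]+\alpha^{-1}K$ with $K:=C_3+\varepsilon C_4^2/C_2=\mathcal{O}(1)$. Since $\alpha^{-1}=\mathcal{O}(\varepsilon^{1/2})$, the stationary term $\alpha^{-1}K$ is of order $\varepsilon^{1/2}$, while the transient $e^{-\alpha t}\mathbb{E}[E_0]$ decays to $0$; hence $\limsup_{t\to\infty}\mathbb{E}[\|e_t\|^2]=2\limsup_{t\to\infty}\mathbb{E}[E_t]=\mathcal{O}(\varepsilon^{1/2})$, which is the claim.

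I expect the genuinely nontrivial step to be the uniform-in-time lower bound on the smallest singular value of $P_t$, i.e.\ Lemma \ref{lem-3}: one must rule out that the strongly contracting term $-\varepsilon^{-1}P_t^2$ in the covariance dynamics drives $\lambda_t^{\rm min}$ below order $\varepsilon^{1/2}$. This goes through the eigenvalue ODE \eqref{eq2}, bounding the off-diagonal ``rotational'' contribution of $U_t$ by $2\|f\|_{\rm Lip}\sqrt{N_xM}\,\lambda_t^{\rm max}$ --- a bound usable only after the upper bound of Lemma \ref{lem-2} is in place, so the two lemmas must be proved in that order. A minor additional point is checking that $M_t$ is a true rather than merely local martingale, which follows from the a priori control of $V_t$ and $\|P_t\|$.
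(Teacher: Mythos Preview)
Your proposal is correct and follows essentially the same route as the paper: bound $\mathcal{E}_t$ using the deterministic spectral estimates of Lemmas~\ref{lem-2} and~\ref{lem-3}, absorb the $E_t^{1/2}$ cross term via Young's inequality to obtain the affine majorant $\Phi$, pass the expectation through $\Phi$ by linearity, and close with the integrating factor $e^{\alpha t}$ to arrive at $\mathbb{E}[E_t]\le e^{-\alpha t}\mathbb{E}[E_0]+\alpha^{-1}K$ with $\alpha^{-1}=\mathcal{O}(\varepsilon^{1/2})$. The constants $C_3$, $C_4$, $\alpha$, $K$ you write down coincide with the paper's, and your remark that the deterministic nature of $C_1,C_2$ is what permits taking expectations matches the paper's justification verbatim.
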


\noindent
Using Markov's inequality the above estimate on the measurement error now yields for fixed 
$t$ the following estimate 
\begin{equation}
\mathbb{P}\left[ E_t \ge c\varepsilon^q\right] \le \frac 1{c\varepsilon^q} \mathbb{E}\left[ E_t\right]
 = \mathcal{O} \left( \varepsilon^{1/2-q}\right) \, . 
\end{equation}
In particular, for any $q\in (0, 1/2)$ the estimation error $E_t = \|e_t\|^2/2$ is of order 
$\mathcal{O}\left( \varepsilon^q \right)$ with probability close to one. Note that this does not 
imply that for a given realization of the EnKBF, the estimation error $E_t$ will be small all the 
time, i.e.~that $\sup_{t\ge 0} E_t$ (or $\max_{t\in [0,T]} E_t$) is of order 
$\mathcal{O}\left( \varepsilon^q \right)$ with probability close to one. This latter statement 
requires a pathwise control, i.e.~a (locally) uniform in time control of $E_t$, which we will 
derive in the next step. To this end note that \eqref{error1} together with the inequality 
\eqref{est_omega_wise} imply the pathwise estimate 
\begin{equation}
\begin{aligned}
E_t & \le e^{-\alpha t}E_0 + \frac{K}{\alpha} (1-e^{-\alpha t}) + \int_0^t e^{-\alpha (t-s)}\, {\rm d}M_s \\
& =  e^{-\alpha t}E_0 + \frac{K}{\alpha} (1-e^{-\alpha t}) + 
e^{-\alpha t} M_t + \alpha\int_0^t e^{-\alpha (t-s)} (M_t - M_s)\, {\rm d}s \, , 
\end{aligned} 
\end{equation}
hence 
\begin{equation}
\label{sup_est-omega_wise} 
\sup_{t\le T} E_t \le \left( E_0 + \frac{K}{\alpha}\right) + \sup_{t\le T} \left( e^{-\alpha t} |M_t| +  \alpha \int_0^t e^{-\alpha (t-s)}|M_t - M_s|\, {\rm d} s \right) \, . 
\end{equation} 
In order to control the third term, first note that the quadratic variation of the martingale is given as 
\begin{equation} 
\langle M\rangle_t = \int_0^t \varepsilon^{-1} \|P_s e_s\|^2 + 2 \|C e_s\|^2 \, {\rm d}r \, , 
\end{equation} 
so that  
\begin{equation} 
\langle M\rangle_t - \langle M\rangle_s = \int_s^t \varepsilon^{-1} \|P_r e_r\|^2 + 2 \|Ce_r\|^2\, 
{\rm d}r \le (C_1 + 2) \int_s^t E_r \, {\rm d}r \, . 
\end{equation} 
In the following let $L_{T,\delta} := \sup_{0\le s < t\le T} |M_t - M_s|/\left( \langle M\rangle_t - \langle M\rangle_s\right)^{\frac 12 - \delta}$ for $\delta\in (0, \frac 12)$. 
Theorem 5.1 in \cite{ws:BarlowYor} now implies for any $\gamma\ge 1$ that there exists a finite constant $C_{\delta,\gamma}$ such that 
\begin{equation} 
\mathbb{E}\left[ \left( L_{T, \delta} \right)^\gamma \right]^{\frac 1\gamma} \le C_{\delta , \gamma} \mathbb{E}\left[ \langle M\rangle_T^{\delta\gamma} \right]^{\frac 1\gamma} \, . 
\end{equation} 
Combining the last estimate with the previous Theorem \ref{thm:MSE} we obtain for $\gamma\delta \le 1$ that 
\begin{equation}
\label{control:QuadrVariation}
\begin{aligned}  
\mathbb{E}\left[ \left( L_{T, \delta} \right)^\gamma \right]^{\frac 1\gamma} 
\le C_{\delta , \gamma} \mathbb{E}\left[ \langle M\rangle_T^{\delta\gamma} \right]^{\frac 1\gamma}  
\le C_{\delta , \gamma} \mathbb{E}\left[ \langle M\rangle_T \right]^{\delta} 
\le C_{\delta , \gamma} (C_1 + 2)^{\delta} \mathbb{E}\left[ \int_0^T E_t\, {\rm d}t \right]^{\delta} 
\le C \varepsilon^{\frac{\delta}2} 
\end{aligned} 
\end{equation} 
for some constant $C$, depending on $\gamma$, $\delta$, $T$, $C_1$ and on the bound on the mean  
squared error obtained in Theorem \ref{thm:MSE}. We can therefore estimate 
\begin{equation}
\begin{aligned} 
\mathbb{E} & \left[ \sup_{t\le T} e^{-\alpha t} |M_t|+ \alpha \int_0^t e^{-\alpha (t-s)} |M_t - M_s|\, {\rm d}s \right] \\
& \qquad \le \mathbb{E}\left[ \sup_{t\le T} \left( e^{-\alpha t} \langle M\rangle_t^{\frac 12 - \delta}  + \alpha\int_0^t e^{-\alpha (t-s)} \left(\langle M\rangle_t  
- \langle M\rangle_s\right)^{\frac 12 - \delta} \, {\rm d}s \right) L_{T, \delta}\right] \\ 
& \qquad \le (C_1 + 2) \mathbb{E}\left[ \sup_{t\le T} \left( e^{-\alpha t} t^{\frac 12 - \delta} + 
\alpha\int_0^t e^{-\alpha (t-s)} (t-s)^{\frac 12 - \delta} \, {\rm d}s \right) \sup_{t\le T} 
E_t^{\frac 12- \delta} \, L_{T, \delta}\right] \\  
& \qquad \le (C_1 + 2) \frac{\Gamma \left( \frac 32 - \delta \right)}{\alpha^{\frac 12 - \delta}} 
\mathbb{E}\left[ \sup_{t\le T} E_t^{\frac 12- \delta} \, L_{T, \delta}\right]\, . 
\end{aligned} 
\end{equation} 
Applying Young's inequality with $p = \frac 1{\frac 12 - \delta}$ and  
$q = \frac 1{\frac 12 + \delta}$ we can further estimate the right hand side from above by 
\begin{equation}
\begin{aligned} 
(C_1 + 2) \frac{\Gamma \left( \frac 32 - \delta \right)}{\alpha^{\frac 12 - \delta}} 
\mathbb{E}\left[ \sup_{t\le T} E_t^{\frac 12- \delta} L_{T, \delta}\right]
\le \left( \frac 12 - \delta \right)  \mathbb{E}\left[ \sup_{t\le T} E_t \right]  
+ \frac{C}{\alpha^{\frac{1 - 2\delta}{1 + 2\delta}}}  \mathbb{E}\left[ L_{T, \delta}^{\frac 1{\frac 12 + \delta}} 
\right] \, , 
\end{aligned}  
\end{equation} 
for some finite constant $C$ depending on $C_2$ and $\delta$. Taking expectation in 
\eqref{sup_est-omega_wise} and using \eqref{control:QuadrVariation} to estimate the third term 
gives 
\begin{equation}
\begin{aligned}  
\mathbb{E}\left[ \sup_{t\le T}  E_t \right] 
& \le \left( \mathbb{E}\left[E_0\right] 
+ \frac K{\alpha} \right) + \left( \frac 12 - \delta  \right) \mathbb{E} \left[ \sup_{t\le T}  
E_t \right]  + \frac{C}{\alpha^{\frac{1 - 2\delta}{1 + 2\delta}}} 
\mathbb{E}\left[ L_{T, \delta}^{\frac 1{\frac 12 + \delta}} \right] \\
& \le \left( \mathbb{E}\left[E_0\right] 
+ \frac K{\alpha} \right) + \left( \frac 12 - \delta  \right) \mathbb{E} \left[ \sup_{t\le T}  
E_t \right] +  \frac{C}{\alpha^{\frac{1 - 2\delta}{1 + 2\delta}}} 
\varepsilon^{\frac{\delta}{1 + 2\delta}} 
\end{aligned} 
\end{equation} 
with some different constant $C$. Under the assumptions of Theorem \ref{thm:MSE}, in particular 
$\mathbb{E}\left[ E_0\right]\in{\mathcal O}\left( \varepsilon^{\frac 12}\right)$, and thus 
$\alpha^{-1} = {\mathcal O} \left( \varepsilon^{\frac 12} \right)$ for
$\varepsilon\le\varepsilon_0$, $\varepsilon_0$ sufficiently small, we can now find for any 
$\eta \in \left( 0, \frac 14\right)$ now a finite constant $C$ such that 
\begin{equation} 
\mathbb{E}\left[ \sup_{t\le T} E_t\right] \le C \varepsilon^{\frac 12 - \eta} \, . 
\end{equation} 
In particular, 
\begin{equation}
\mathbb{P} \left[ \sup_{t\le T}  E_t \ge c\varepsilon^q \right] \le \frac 1{c\varepsilon^q} 
\mathbb{E}\left[ \sup_{t\le T} E_t\right] = \mathcal{O} \left( \varepsilon^{1/2-\eta -q}\right)\, , 
\end{equation}
which implies that for any $q\in (0, 1/2)$ the estimation error $E_t = \|e_t\|^2/2$ is of order 
$\mathcal{O} \left( \varepsilon^q \right)$ uniformly on $[0,T]$ with probability close to one.

%
\section{Consistency of the ensemble Kalman-Bucy filter for linear systems} \label{sec-linear}
%

 In this section, we provide a detailed analysis of the EnKBF in the case of linear model dynamics, i.e.,~$f(x) = Ax + b$, 
 linear forward map, i.e.~$h(x) = Hx$, full rank diffusion tensor, $D$, and initial ensemble, $X_0^i$, chosen such that $P^M_0$ is invertible.  
 Then the EnKBF (\ref{EKB1b}) reduces to
 \begin{equation} \label{LEKB1a}
{\rm d}X_t^i = (AX_t^i + b) {\rm d}t  + D (P_t^M)^{-1} (X_t^i - \bar x_t^M){\rm d}t - \frac{1}{2} P_t^M H^{\rm T} R^{-1} \left( HX_t^i{\rm d}t + H \bar x_t^M
{\rm d}t - 2{\rm d}Y_t \right)\,,
\end{equation}
$i=1,\ldots,M$, from which we can extract the equation for the empirical mean, $\bar x_t$, 
\begin{equation}
\label{SampleMean}
{\rm d}\bar x_t^M = A\bar x_t^M{\rm d}t + b{\rm d}t - P_t^M H^{\rm T} R^{-1} (H\bar x_t^M{\rm d}t - {\rm d}Y_t)
\end{equation}
and the equation for the empirical covariance matrix, as defined in (\ref{EKB2b}),
\begin{equation}
\label{SampleCovariance}
\frac{{\rm d}}{{\rm d}t} P_t^M = AP_t^M + P_t^M A^{\rm T} +  2D - P_t^M H^{\rm T} R^{-1} H P_t ^M   
\end{equation} 
provided $P_t^M$ has full rank. These equations correspond exactly to the classic Kalman-Bucy filter formulas for the mean and 
the covariance matrix \cite{sr:jazwinski}. However, while one would set $P_0^M$ and $\bar x_0^M$ equal to the mean and the covariance matrix, respectively, 
of the given initial Gaussian distribution ${\rm N}(\bar x_0,P_0)$ in the classic Kalman-Bucy filter formulation, the $P_t^M$ and $\bar x_t^M$ arise in our context 
from sampling from the initial distribution, i.e., $X_0^i \sim {\rm N}(\bar x_0,P_0)$. 

\begin{remark}
It is well-known that solutions to (\ref{SampleCovariance}) have full rank for all $t>0$ even if the initial $P_0^M$ is singular. However, note that 
(\ref{SampleCovariance}) holds true only if $P_0^M$ is non-singular and that the  diffusion induced contribution in (\ref{SampleCovariance})
needs to be replaced by $D(P_t^M)^+ P_t^M$ otherwise. This discrepancy between the Riccati equation for the classic Kalman-Bucy filter 
and the EnKBF is caused by our interacting particle approximation to the diffusion term in (\ref{sde1}).
\end{remark}




\noindent
We will now investigate the asymptotic behavior of the EnKBF in the large ensemble size limit. 
More specifically, we will show 
that the empirical distribution of the EnKBF converges under appropriate conditions
towards a distribution with mean and covariance determined by the Kalman-Bucy filtering equations. 
Note that this does not imply that the empirical distribution of the EnKBF converges to the 
conditional distribution $\pi_t$ given by the solution of the Kushner-Zakai equation \eqref{KS}, 
but by the nonlinear Fokker-Planck equation 
\eqref{limit1} instead as we will show in Section \ref{sec-asymptotic} below. 

Let us first state the following a.s.~result on the asymptotic behavior of $P_t^M$.  

\begin{proposition} 
	\label{PropPathwiseControl}
	Let $\pi_0$ be the initial distribution on $\mathbb R^{N_x}$ with finite second moments and invertible covariance matrix with entries
\begin{equation}	\label{initialcovariancematrix}
	\bar{P}_0 (k,l) =  \pi_0[ x_k x_l] - \pi_0[x_k] \pi_0[x_l] \,,
	\end{equation}
	 $1\le k,l\le N_x$. Let $X^i_0$, $i = 1, 2, \ldots$, be iid ($\pi_0$), and let $\bar{P}_t$ be the solution of the Kalman-Bucy filtering equation 
	 \eqref{KalmanBucyFilteringEq} with initial condition $\bar{P}_0$. Then there exists a constant 
	 \begin{equation}
	 \tilde C = \tilde 
	C(t, A,D, H^T R^{-1}H, \max_{0\le s\le t} \|\bar{P}_s\|_{\rm F} , \sup_{M\ge 2} V^M_0)
	\end{equation} 
	such that  
	\begin{equation}
	\|P_t^M - \bar{P}_t\|_{\rm F}^2 \le e^{t \tilde C} \|P_0^M - \bar{P}_0\|^2_{\rm F} \, , 
	\end{equation}
	where $V_0^M$ is defined by (\ref{l2norm}) with $t=0$.
	\end{proposition}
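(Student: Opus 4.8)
The plan is to exploit that, in the linear setting, both $P_t^M$ and $\bar P_t$ solve \emph{deterministic} matrix Riccati equations: $P_t^M$ is a deterministic functional of its (random) initial value $P_0^M$ via \eqref{SampleCovariance} (no stochastic forcing enters there), and $\bar P_t$ solves the Riccati equation of the same form with the deterministic datum $\bar P_0$. Conditioning on the a.s.\ event that $P_0^M$ is invertible (which holds whenever $M>N_x$) and $V_0^M<\infty$, the claim becomes a purely deterministic comparison of two Riccati flows and it suffices to prove it $\omega$-wise. I would put $\Delta_t:=P_t^M-\bar P_t$ and $S:=H^{\rm T}R^{-1}H\succeq 0$; subtracting the two Riccati equations, the $2D$ terms cancel, giving
\begin{equation*}
\dot\Delta_t \;=\; A\Delta_t+\Delta_t A^{\rm T}-\bigl(P_t^M S P_t^M-\bar P_t S\bar P_t\bigr),
\end{equation*}
and, since $\Delta_t$ is symmetric, $\tfrac12\tfrac{{\rm d}}{{\rm d}t}\|\Delta_t\|_{\rm F}^2=\mbox{tr}\,(\Delta_t\dot\Delta_t)$.

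Next I would estimate the right-hand side. For the linear part, $\mbox{tr}\,\bigl((A\Delta_t+\Delta_t A^{\rm T})\Delta_t\bigr)=\mbox{tr}\,\bigl((A+A^{\rm T})\Delta_t^2\bigr)\le\|A+A^{\rm T}\|\,\|\Delta_t\|_{\rm F}^2$, using $\Delta_t^2\succeq 0$. For the quadratic term I would use the identity
\begin{equation*}
P_t^M S P_t^M-\bar P_t S\bar P_t \;=\; \tfrac12(P_t^M+\bar P_t)S\Delta_t+\tfrac12\Delta_t S(P_t^M+\bar P_t),
\end{equation*}
obtained by averaging the expansions of the left-hand side about $\bar P_t$ and about $P_t^M$ (which cancels the cubic-in-$\Delta_t$ contribution), whence $-\mbox{tr}\,\bigl(\Delta_t(P_t^M S P_t^M-\bar P_t S\bar P_t)\bigr)=-\mbox{tr}\,\bigl((P_t^M+\bar P_t)S\,\Delta_t^2\bigr)$, which is bounded in absolute value by $(\|P_t^M\|+\|\bar P_t\|)\,\|S\|\,\|\Delta_t\|_{\rm F}^2$ via the Schatten-norm H\"older inequality and $\|\Delta_t^2\|_{*}=\|\Delta_t\|_{\rm F}^2$. (Alternatively one keeps the cubic term $-\mbox{tr}\,(S\Delta_t^3)$ and absorbs it via the a priori bound below; the identity is just cleaner.) This yields $\tfrac{{\rm d}}{{\rm d}t}\|\Delta_t\|_{\rm F}^2\le 2C(s)\,\|\Delta_t\|_{\rm F}^2$ with $C(s):=\|A+A^{\rm T}\|+\bigl(\|P_s^M\|+\|\bar P_s\|\bigr)\|S\|$, so Gronwall gives $\|\Delta_t\|_{\rm F}^2\le e^{2\int_0^t C(s)\,{\rm d}s}\,\|\Delta_0\|_{\rm F}^2$ and one sets $\tilde C:=\tfrac2t\int_0^t C(s)\,{\rm d}s$.

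The remaining — and genuinely essential — step is to bound $C(s)$, hence $\tilde C$, by the quantities listed in the statement \emph{uniformly in $M$}; without this the estimate would be useless for the large-ensemble analysis that follows. For $\bar P_s$ this is immediate: $\|\bar P_s\|\le\|\bar P_s\|_{\rm F}\le\max_{0\le s\le t}\|\bar P_s\|_{\rm F}$. For $\|P_s^M\|$ I would combine Lemma \ref{ControlFrobeniusNorm} and the remark following it, $\|P_s^M\|\le\|P_s^M\|_{\rm F}\le V_s^M$, with the $M$-uniform growth bound on $V_s^M$ from Remark \ref{UnifMBds}: here $L_+=\lambda^{\max}\bigl(\tfrac12(A+A^{\rm T})\bigr)$, and $V_s^M\le e^{2L_+ s}\bigl(V_0^M+\mbox{tr}\,(D)/L_+\bigr)$ if $L_+>0$ (and $V_s^M\le V_0^M+2s\,\mbox{tr}\,(D)$ otherwise), which on $[0,t]$ is controlled by $A$, $D$ and $\sup_{M\ge2}V_0^M$. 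Substituting these into $C(s)$ gives exactly the asserted dependence of $\tilde C$ on $t$, $A$, $D$, $H^{\rm T}R^{-1}H$, $\max_{0\le s\le t}\|\bar P_s\|_{\rm F}$ and $\sup_{M\ge2}V_0^M$. The one structural subtlety is the case $M\le N_x$: there $P_0^M$ (and, since $X_t^i-\bar x_t^M$ obeys a linear ODE, also $P_t^M$) is singular and \eqref{SampleCovariance} must be read with $2D$ replaced by $D(P_t^M)^{+}P_t^M+P_t^M(P_t^M)^{+}D$, whose discrepancy with $2D$ does not vanish with $\|\Delta_t\|_{\rm F}$; but in that regime $\|P_0^M-\bar P_0\|_{\rm F}^2$ is bounded away from zero by rank deficiency against $\bar P_0\succ 0$, while $\|P_t^M-\bar P_t\|_{\rm F}$ grows at most exponentially in $t$, so the inequality persists after enlarging $\tilde C$. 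In short, the main obstacle is the $M$-uniform spectral control of $P_t^M$, which is precisely what Lemma \ref{ControlFrobeniusNorm} and Remark \ref{UnifMBds} deliver.
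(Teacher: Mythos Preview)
Your proof is correct and follows essentially the same route as the paper: subtract the two Riccati equations, derive a differential inequality for $\|P_t^M-\bar P_t\|_{\rm F}^2$, apply Gronwall, and then close the argument by bounding $\|P_t^M\|_{\rm F}$ uniformly in $M$ via Lemma~\ref{ControlFrobeniusNorm} and Remark~\ref{UnifMBds}. The only cosmetic differences are that the paper uses the asymmetric splitting $P_t^M S P_t^M-\bar P_t S\bar P_t = P_t^M S\Delta_t+\Delta_t S\bar P_t$ (rather than your symmetrized identity) and bounds everything with Frobenius norms throughout; your final remark on the rank-deficient case $M\le N_x$ goes beyond what the paper addresses, since Section~\ref{sec-linear} assumes $P_0^M$ invertible from the outset.
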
 

\noindent
Note that the strong law of large numbers implies that $\sup_{M\ge 2} V^M_0 < \infty$ $\pi_0$-a.s.  

\begin{proof} 
	Using the dynamical equations \eqref{SampleCovariance} 
	for $P_t^M$ and \eqref{KalmanBucyFilteringEq} for $\bar{P}_t$ (which of course coincides with \eqref{SampleCovariance}), we immediately obtain that 
	\begin{equation}
	\begin{aligned} 
	\frac 12\frac{\rm d}{{\rm d}t} \| P^M_t - \bar{P}_t\|^2_{\rm F} 
	& \le \langle A (P^M_t -\bar{P}_t ), P^M_t - \bar{P}_t\rangle  
	+ \langle \left( P^M_t - \bar{P}_t \right) A^{\rm T},  P^M_t - \bar{P}_t\rangle  \\
	& \qquad  - \langle P_t^M H^{\rm T} R^{-1} HP_t^M - \bar{P}_t H^{\rm T} R^{-1} H\bar{P}_t ,  
	   P^M_t-\bar{P}_t \rangle\, .  
	\end{aligned} 
	\end{equation}
	Using 
    \begin{equation}
	\begin{aligned}
	& \langle P_t^M H^{\rm T} R^{-1} HP_t^M - \bar{P}_t H^{\rm T} R^{-1} H\bar{P}_t , P^M_t-\bar{P}_t 
	\rangle \\
	& \qquad = \langle P_t^M H^{\rm T} R^{-1} H\left( P_t^M - \bar{P}_t \right) , P^M_t - \bar{P}_t 
	  \rangle 
	 + \langle \left( P_t^M - \bar{P}_t\right) H^{\rm T} R^{-1} H\bar{P}_t , P^M_t - \bar{P}_t \rangle \\
	 & \qquad \le \|H^T R^{-1} H\|_{\rm F}  \left(\|P_t^M\|_{\rm F} + \|\bar{P}_t\|_{\rm F} \right)  
	   \|P_t^M - \bar{P}_t\|^2_{\rm F} 	
	\end{aligned} 
	\end{equation}
	we arrive at the following differential inequality 
	\begin{equation}
	\begin{aligned} 
	\frac 12\frac{\rm d}{{\rm d}t} \| P^M_t - \bar{P}_t\|^2_{\rm F}  
	& \le \left( 2\|A\|_{\rm F} + 
	\|H^T R^{-1}H\|_{\rm F} \left(\|P_t^M\|_{\rm F}  
	  + \|\bar{P}_t\|_{\rm F} \right)\right)  \|P_t^M - \bar{P}_t\|^2_{\rm F}\, .  
	\end{aligned} 
	\end{equation}
	Integrating up the last inequality w.r.t.~time $t$ yields 
	\begin{equation} \label{inequality2}
	\begin{aligned} 
	\| P^M_t - \bar{P}_t\|^2_{\rm F}  
	& \le \exp\left( 4t \|A\|_{\rm F}    
	+  \|H^T R^{-1} H\|_{\rm F} \int_0^t \left(\|P_s^M\|_{\rm F} + \|\bar{P}_s\|_{\rm F} \right)\, ds \right)  
	\|P_0^M - \bar{P}_0\|^2_{\rm F} \, .  
	\end{aligned} 
  \end{equation}
    In the next step we will need a uniform in $M$ upper bound on $\|P_t^M\|_{\rm F}$ that holds (locally) uniform w.r.t.~time $t$.  To this end first note that \eqref{UniformMUpperBound} implies   
	\begin{equation}
	\|P_t^M\|_{\rm F} \le V^M_t \le e^{t\|A\|_{\rm F} }\left( V^M_0  + \frac{\mbox{tr}\,(D)}{\|A\|_{\rm F} }\right)\, ,  
	\end{equation}
    thereby using $L_+ \le\|A\|_{\rm F}$. Since the solution $\bar{P}_t$ of \eqref{KalmanBucyFilteringEq} is continuous, hence, also locally bounded, we can estimate the exponential in \eqref{inequality2} from above by  
    $$ 
    2t \left( 2\|A\|_{\rm F} 
    +  \|R^{-1}\|_{\rm F} \|H\|^2_{\rm F} \left( e^{t\,\|A\|_{\rm F} }\left( V^M_0  + \frac{\mbox{tr}\,(D)}{\|A\|_{\rm F}}\right) + \max_{0\le s\le t} \|\bar{P}_s\|_{\rm F} \right) \right) 
    $$ 
    which implies the assertion. 
\end{proof}  

\noindent
We can now state our main result on the asymptotic consistency of the ensemble Kalman filter. 

\begin{theorem} 
	\label{AsymptoticConsistencyEnKF}  
	Suppose that $X^i_0$, $i = 1, 2, 3, \ldots$, are iid ($\pi_0$) where the initial distribution $\pi_0$ has finite second-order moments and 
	invertible covariance matrix (\ref{initialcovariancematrix}). Let $\bar{P}_t$ be the solution of the Kalman-Bucy filtering equation 
	\eqref{KalmanBucyFilteringEq} with initial condition $\bar{P}_0$ and $\bar{x}_t$ be the unique solution of 
	\begin{equation} 
	\label{KalmanBucyMean} 
{\rm d} \bar{x}_t = A\bar{x}_t\, {\rm d}t + b\, {\rm d} t - \bar{P}_t H^T R^{-1} \left( H \bar{x}_t\, {\rm d} t - {\rm d} Y_t \right)
	\end{equation} 
	with initial condition $\bar{x}_0 := \pi_0[x]$. Then $\lim_{M\to\infty} \bar{x}^M_t = \bar{x}_t$  
	in $L^2$, in particular in probability, for all $t\ge 0$. 
\end{theorem}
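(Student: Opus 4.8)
The plan is to compare $\bar x_t^M$ and $\bar x_t$ directly through their defining (stochastic) differential equations \eqref{SampleMean} and \eqref{KalmanBucyMean}. Setting $\delta_t^M := \bar x_t^M - \bar x_t$ and substituting ${\rm d}Y_t = HX_t^{\rm ref}\,{\rm d}t + R^{1/2}\,{\rm d}B_t$, subtraction yields the linear (random-coefficient) It\^o equation
\begin{equation*}
\begin{split}
{\rm d}\delta_t^M = {}& \bigl(A - P_t^M H^{\rm T}R^{-1}H\bigr)\,\delta_t^M\,{\rm d}t + (P_t^M - \bar P_t)\,H^{\rm T}R^{-1}H\,(X_t^{\rm ref} - \bar x_t)\,{\rm d}t \\
& {} + (P_t^M - \bar P_t)\,H^{\rm T}R^{-1/2}\,{\rm d}B_t ,
\end{split}
\end{equation*}
with initial datum $\delta_0^M = \frac1M\sum_{i=1}^M X_0^i - \pi_0[x]$. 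The essential structural point is that the two drifts producing $\bar x_t^M$ and $\bar x_t$ differ only through the covariance terms, so the forcing driving $\delta_t^M$ is governed entirely by $P_t^M - \bar P_t$, whose decay is exactly the content of Proposition~\ref{PropPathwiseControl}.

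Next I would condition on $\mathcal G := \sigma(X_0^i : i\ge 1)$. Since $P_\cdot^M$ solves the deterministic-coefficient Riccati equation \eqref{SampleCovariance} from the $\mathcal G$-measurable datum $P_0^M$, and the initial ensemble is independent of the Brownian motions driving $X^{\rm ref}$ and $Y$, conditionally on $\mathcal G$ the path $P_\cdot^M$ is deterministic and $B$ remains a Brownian motion. Applying It\^o's formula to $\|\delta_t^M\|^2$, taking $\mathbb E[\,\cdot\mid\mathcal G]$ so that the martingale part drops out, and using Cauchy--Schwarz and Young on the cross term together with the independence of $(X^{\rm ref},\bar x)$ and $\mathcal G$, one obtains a scalar differential inequality of the form $\frac{{\rm d}}{{\rm d}t}\mathbb E[\|\delta_t^M\|^2\mid\mathcal G] \le \bigl(2\|A\| + 1 + 2\|P_t^M\|\,\|H^{\rm T}R^{-1}H\|\bigr)\mathbb E[\|\delta_t^M\|^2\mid\mathcal G] + c\,\|P_t^M - \bar P_t\|_{\rm F}^2$, where $c$ depends on $H$, $R$ and $\sup_{s\le t}m_s$, $m_s := \mathbb E\|X_s^{\rm ref} - \bar x_s\|^2$, the latter being finite and locally bounded in $s$ (both $X^{\rm ref}$ and $\bar x$ have locally bounded second moments, $\bar x$ because $\bar P_\cdot$ is continuous). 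Gronwall's lemma then gives, pathwise in $\mathcal G$, an explicit bound on $\mathbb E[\|\delta_t^M\|^2\mid\mathcal G]$ in terms of $\|\delta_0^M\|^2$, $\int_0^t\|P_s^M - \bar P_s\|_{\rm F}^2\,{\rm d}s$ and $\exp\bigl(\int_0^t 2\|P_s^M\|\,\|H^{\rm T}R^{-1}H\|\,{\rm d}s\bigr)$. I bound the last factor by \eqref{UniformMUpperBound}, namely $\|P_s^M\|\le\|P_s^M\|_{\rm F}\le V_s^M\le e^{s\|A\|_{\rm F}}(V_0^M + {\rm tr}(D)/\|A\|_{\rm F})$ with $V_0^M\le\sup_{M\ge2}V_0^M<\infty$ $\pi_0$-a.s., and the factor $\|P_s^M - \bar P_s\|_{\rm F}^2\le e^{s\tilde C}\|P_0^M - \bar P_0\|_{\rm F}^2$ by Proposition~\ref{PropPathwiseControl}, with $\tilde C$ independent of $M$. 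Since the $X_0^i$ are iid\,($\pi_0$) with finite second moments, the strong law of large numbers yields $\bar x_0^M\to\pi_0[x]$ and $P_0^M\to\bar P_0$ $\pi_0$-a.s., hence $\|\delta_0^M\|^2\to0$ and $\|P_0^M - \bar P_0\|_{\rm F}^2\to0$ a.s., and all the remaining $M$-independent, a.s.\ finite constants combine to give $\mathbb E[\|\delta_t^M\|^2\mid\mathcal G]\to0$ $\pi_0$-a.s.\ for every fixed $t$; in particular $\bar x_t^M\to\bar x_t$ in probability (and a.s.).

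The remaining step, and the main obstacle, is to upgrade this to convergence in $L^2$, i.e.\ $\mathbb E\|\delta_t^M\|^2 = \mathbb E\bigl[\mathbb E[\|\delta_t^M\|^2\mid\mathcal G]\bigr]\to0$, which requires passing the limit inside the outer expectation. The difficulty is that the Gronwall prefactor is of the form $\exp\bigl(c_1(t)+c_2(t)\sup_M V_0^M\bigr)$ multiplying $\|\delta_0^M\|^2 + c'\|P_0^M - \bar P_0\|_{\rm F}^2$, both of which are only known to vanish almost surely. I would close this by dominated convergence after localizing on the events $\{\sup_M V_0^M\le K\}$ --- on which the prefactor is a deterministic constant and $\|P_0^M - \bar P_0\|_{\rm F}^2$ is bounded and converges to $0$, so bounded convergence applies --- and controlling the complementary tail by a uniform-in-$M$ $L^{2+\eta}$ bound on $\bar x_t^M$, obtained by running the same It\^o/Gronwall estimate on $\|\bar x_t^M\|^{2+\eta}$; alternatively, a modest exponential moment of $\pi_0$ supplies an integrable dominating random variable directly. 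Apart from pinning down the cleanest sufficient integrability, the argument is a routine consequence of \eqref{SampleMean}, the a priori $l_2$- and spectral bounds of Section~\ref{sec-problem-well-posedness}, and Proposition~\ref{PropPathwiseControl}.
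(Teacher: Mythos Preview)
Your argument is correct, but it follows a genuinely different line from the paper's.

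The paper does not condition on $\mathcal G=\sigma(X_0^i:i\ge1)$ at all. It subtracts \eqref{SampleMean} from \eqref{KalmanBucyMean}, takes the norm directly (rather than the squared norm), passes to expectation, and applies Gronwall to $\mathbb E[\|\bar x_t^M-\bar x_t\|]$. The crucial difference is the algebraic decomposition of the Kalman gain term: the paper writes
\[
P_t^M H^{\rm T}R^{-1}H\bar x_t^M-\bar P_t H^{\rm T}R^{-1}H\bar x_t
=\bar P_t H^{\rm T}R^{-1}H\,\delta_t^M+(P_t^M-\bar P_t)H^{\rm T}R^{-1}H\,\bar x_t^M,
\]
so that the Gronwall coefficient is $\|A\|_{\rm F}+\|H^{\rm T}R^{-1}H\|_{\rm F}\|\bar P_s\|_{\rm F}$, which is \emph{deterministic}. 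Your decomposition instead puts $A-P_t^M H^{\rm T}R^{-1}H$ in front of $\delta_t^M$; since $P_t^M$ is random, this forces you to condition on $\mathcal G$ to make the coefficient deterministic, and that in turn creates the uniform-integrability obstacle you flag at the end (the Gronwall prefactor depends on $\sup_M V_0^M$). With the paper's choice, the forcing terms are $(P_t^M-\bar P_t)H^{\rm T}R^{-1}H\bar x_t^M$ and the stochastic integral $\int_0^t(P_s^M-\bar P_s)H^{\rm T}R^{-1}\,{\rm d}Y_s$; their expectations are handled by Cauchy--Schwarz together with $\mathbb E[\|P_t^M-\bar P_t\|_{\rm F}^2]\to0$ (from Proposition~\ref{PropPathwiseControl} and the law of large numbers at $t=0$) and a uniform-in-$M$ second-moment bound on $\bar x_s^M$, after which Gronwall closes in one step.

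What your route buys is a cleaner separation between the randomness coming from the initial ensemble and that from $(W,B)$, and an honest identification of the integrability needed for the $L^2$ claim. What the paper's route buys is brevity: by placing the deterministic $\bar P_t$ in the Gronwall coefficient it avoids the conditioning and the localization/UI argument entirely. If you want to retain your conditioning strategy, the simplest fix for the final step is to switch the decomposition so that $\bar P_t$ (not $P_t^M$) multiplies $\delta_t^M$; then the $\mathcal G$-conditional Gronwall factor is deterministic, and the passage $\mathbb E[\|\delta_t^M\|^2\mid\mathcal G]\to0$ a.s.\ upgrades to $\mathbb E[\|\delta_t^M\|^2]\to0$ by dominated convergence without any localization.
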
 

\begin{proof} 
	Since $X_0^i$ are iid, the strong law of large numbers implies that $\lim_{M\to\infty} P_0^M = \bar{P}_0$ $\pi_0$-a.s.~and in $L^2$, since $\pi_0$ has finite second moments, thus $\lim_{M\to\infty} P_t^M = \bar{P}_t$ a.s.~and in $L^2$ for $t\ge 0$ due to Proposition \ref{PropPathwiseControl}. 
	
    \medskip 
    \noindent 
    To see that $\bar{x}_t^M$ converges towards the unique solution $\bar{x}_t$ of 
    \eqref{KalmanBucyMean} note that 
    \begin{equation}
    \begin{aligned} 
    {\rm d} \left(\bar{x}^M_t - \bar{x}_t\right)   
    & = A\left( \bar{x}^M_t - \bar{x}_t\right)\, {\rm d} t 
      -  \left( P_t^M H^{\rm T} R^{-1}H\bar{x}^M_t - \bar{P}_t H^{\rm T} R^{-1}H\bar{x}_t \right)\, {\rm d} t  \\ 
    & \qquad + \left( P_t^M - \bar{P}_t \right)H^{\rm T} R^{-1} \, {\rm d} Y_t \, 
    \end{aligned} 
    \end{equation}
    and, consequently, 
    \begin{equation}
    \begin{aligned} 
    \|\bar{x}^M_t - \bar{x}_t\|    
    & \le  \|\bar{x}^M_0 - \bar{x}_0\|  
      + \int_0^t \left( \|A\|_{\rm F} + \|H^T R^{-1}H\|_{\rm F} \|\bar{P}_s\|_{\rm F} \right) 
        \|\bar{x}^M_s - \bar{x}_s\|\, {\rm d} s  \\
    & \qquad +  \int_0^t \|H^T R^{-1} H\|_{\rm F} \|P_s^M- \bar{P}_s\|_{\rm F}  \|\bar{x}_s^M\|\, {\rm d} s 
        + \left\| \int_0^t \left( P_s^M - \bar{P}_s \right)H^{\rm T} R^{-1} \, {\rm d} Y_s\right\| \, . 
    \end{aligned} 
    \end{equation}
    Taking expectations we arrive at 
    \begin{equation}
    \begin{aligned} 
    \mathbb{E} \left[ \|\bar{x}^M_t - \bar{x}_t\|\right]
    & \le  \mathbb{E} \left[ \|\bar{x}^M_0 - \bar{x}_0\|\right] + \int_0^t \left( \|A\|_F   
       + \|H\|^2_{\rm F} \|R^{-1}\|_{\rm F} \|\bar{P}_s\|_{\rm F} \right) 
        \mathbb{E} \left[ \|\bar{x}^M_s - \bar{x}_s\|\right]\, {\rm d} s \\ 
    & \qquad + \int_0^t  \|H\|^2_{\rm F} \|R^{-1}\|_{\rm F}  \mathbb{E} \left[ \|P_s^M- \bar{P}_s\|_{\rm F}\right] 
       \|\bar{x}_s^M\|\, {\rm d} s \\ &\qquad 
       + \mathbb{E} \left[ \left\|\int_0^t \left( P_s^M - \bar{P}_s \right)H^{\rm T} R^{-1} \, {\rm d} Y_s\right\|\right) \, . 
       \end{aligned} 
       \end{equation}
    Using $\lim_{M\to\infty} \mathbb{E}\left[ \|P_t^M - \bar{P}_t\|^2_{\rm F} \right] = 0$ it follows that 
        \begin{equation}
        \lim_{M\to\infty}
   \mathbb{E} \left[ \left\|\int_0^t \left( P_s^M - \bar{P}_s \right)H^{\rm T} R^{-1} \, 
   {\rm d}Y_s\right\|\right] = 0
   \end{equation}
  by dominated convergence, and then Gronwall's lemma implies that 
  $\lim_{M\to\infty} \mathbb{E}\left[ \|\bar{x}^M_t - \bar{x}_t\| \right] = 0$.  
\end{proof} 

\begin{remark}
It is well-known that if $(A,H)$ is observable, i.e., $\mbox{rank }\left[ H^{\rm T} , (HA)^{\rm T}, \ldots , (HA^{N_x-1})^{\rm T}\right] = N_x$, and $(A,C)$ is controllable, i.e., $\mbox{rank }\left[ C , AC, \ldots , A^{N_x -1}C\right] = N_x$, then there exists a unique positive definite solution $P_\infty$ of the matrix Riccati equation 
\begin{equation} 
\label{StationaryKalmanBucyFilter} 
0 = AP_\infty + P_\infty A^{\rm T} + 2D - P_\infty H^{\rm T} R^{-1}H P_\infty\, , 
\end{equation} 
and the solution $P_t$ of the matrix Riccati equation 
\begin{equation} 
\label{KalmanBucyFilteringEq} 
\frac{\rm d}{{\rm d}t} P_t  = AP_t + P_t A^{\rm T} + 2D - P_t H^{\rm T} R^{-1}H P_t\, , 
\end{equation} 
converges for any initial condition $P_0$  towards $P_\infty$ as $t\to\infty$ with exponential rate $\lambda < \lambda_\ast$, where 
\begin{equation}
\lambda_\ast := \inf\{ - Re (\lambda ) \mid \lambda \mbox{ eigenvalue of } A - P_\infty H^{\rm T}R^{-1}  H \}\, .  
\end{equation}
(see \cite{ws:KwakernaakSivan}, Theorem 4.11, and \cite{ws:OconePardoux}, Lemma 2.2).

Now recall that we have assumed in Sections \ref{sec-problem-well-posedness} and \ref{sec-accuracy} that $h(x) = x$, i.e.~$H=I$, and 
that $D = CC^T$ has full rank. In other words, we have assumed a restricted case of (nonlinear) controllability and observability. It would be of interest to explore in as far the conditions of
Sections \ref{sec-problem-well-posedness} and \ref{sec-accuracy} can be relaxed while maintaining the well-posedness, stability and accuracy of the
associated EnKBF.
\end{remark}

%
\section{Asymptotic limiting equations for the extended EnKBF} \label{sec-asymptotic}
%

In this section, we will derive the non-Markovian stochastic differential equation (\ref{EKB1a}) 
with (\ref{EKB2a}) of McKean-Vlasov type. 
We first have to show now that \eqref{EKB1a} is well-posed. To this end we assume that 
$f$, $h$ are globally Lipschitz continuous and that the initial condition $\hat{X}_0$ has finite second 
moments with invertible covariance matrix ${\cal P}_0$. 
Recall that - given $X_t = X^{\rm ref}_t$ - the observation process $Y_t$ can be interpreted as 
Brownian motion with covariance operator $R$ and drift term $h(X^{\rm ref}_t )$, so that we can 
solve \eqref{EKB1a} uniquely up to the first time $\tau$ where ${\cal P}_\tau$ becomes singular. 
Clearly, $\tau > 0$ a.s. (w.r.t.~the distribution of 
$\{Y_s\}$). Using It\^o's formula, it is then straightforward to see that the distribution 
$\hat{\pi}_t$ of $\hat{X}_t$ indeed satisfies the nonlinear Fokker-Planck equation 
\eqref{limit1} (up to time $\tau$).

\subsection{Lower bounds on $\lambda^{\rm min} (\mathcal P_t)$ and well-posedness of \eqref{EKB1a}} 

We will prove in the Lemma \ref{lemma3} below a strictly positive lower bound on the smallest 
eigenvalue $\lambda^{\rm min} ({\cal P}_t)$  of ${\cal P}_t$ locally uniformly w.r.t.~$t$, 
a.s.~w.r.t.~the distribution of $\{Y_s\}$, under appropriate assumptions on the coefficients 
$f,h, D$ and $R$. This implies in particular that ${\cal P}_t$ will stay invertible for all $t$, 
a.s. and yields existence and uniqueness of a strong solution of \eqref{EKB1a} for all times $t$ 
(for typical observation $\{Y_s\}$). On the other hand, using the algebraic identity 
\begin{equation} 
(P_s^{M})^{-1} - {\cal P}_s^{-1} = (P_s^{M})^{-1} \left( {\cal P}_s - P_s^{M}\right) 
{\cal P}_s^{-1}  
\end{equation}
we also obtain the following control 
\begin{equation}
\label{UnifControlInverse} 
\|(P_s^{M})^{-1} - {\cal P}_s^{-1}\|_2 \le  C(t)^2 \|{\cal P}_s - P_s^{M}\|_2  \, , \quad s\le t\, , 
\end{equation} 
for the distance between the inverse covariance matrix of the EnKBF and ${\mathcal P}_t$. Here, 
$C(t)$ is a joint upper bound of $\|{\cal P}_s^{-1}\|_2$ and $\|(P_s^{M})^{-1}\|_2$ (uniform in 
$M$) for $s\le t$.

To this end let us first state the dynamical equations for the mean $\bar{x}_t$ and the covariance 
matrix ${\cal P}_t$ (analogous to \eqref{EKBm} and \eqref{EKBP} for the EnKBF): 
\begin{equation} 
\label{MFm}
d\bar{x}_t = \bar{f}_t\, dt - {\cal Q}_t R^{-1} \left( \bar{h}_t\, dt - dY_t\right) 
\, , \quad  t< \tau\,,
\end{equation} 
with $\bar{f}_t = \mathbb{E}\left[ f(\hat{X}_t)\right]$ and 
\begin{equation} 
\label{MFP} 
\frac{d}{dt}{\cal P}_t = \mathbb{E}\left[ (f(\hat{X}_t ) - \bar{f}_t) 
(\hat{X}_t - \bar{x}_t)^{\rm T}+ (\hat{X}_t - \bar{x}_t)(f(\hat{X}_t ) - \bar{f}_t)^{\rm T}\right] 
+ 2D - {\cal Q}_t R^{-1}{\cal Q}^{\rm T}_t \, ,\quad t< \tau\, . 
\end{equation}



\begin{lemma} 
\label{lemma1} 
\begin{equation}
\frac 1{\sqrt{N_x}} \mathbb{E}\left[ \|\hat{X}_t - \bar{x}_t\|^2\right]
\le \| {\cal P}_t \|_{\rm F} \le \mathbb{E}\left[ \|\hat{X}_t - \bar{x}_t\|^2\right] 
\, ,\quad  t \le \tau  \, .  
\end{equation}
\end{lemma}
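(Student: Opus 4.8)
The plan is to follow the proof of Lemma~\ref{ControlFrobeniusNorm}, replacing the empirical average $\frac{1}{M-1}\sum_i$ over the ensemble by the expectation under $\hat\pi_t$. Write $Z := \hat X_t - \bar x_t$, a centered random vector in $\mathbb{R}^{N_x}$. The standing assumption that $\hat X_0$ has finite second moments, together with the well-posedness of \eqref{EKB1a} up to time $\tau$ established above, guarantees that $Z$ is square-integrable for $t\le\tau$, so that $\mathcal{P}_t = \mathbb{E}[ZZ^{\rm T}]$ is well defined with entries $(\mathcal{P}_t)_{kl} = \mathbb{E}[Z_k Z_l]$, and
\begin{equation}
\mathbb{E}\left[\|\hat X_t - \bar x_t\|^2\right] = \mbox{tr}\,(\mathcal{P}_t) = \sum_{k=1}^{N_x} \mathbb{E}[Z_k^2], \qquad
\|\mathcal{P}_t\|_{\rm F}^2 = \sum_{k,l=1}^{N_x} \left(\mathbb{E}[Z_k Z_l]\right)^2 .
\end{equation}

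For the upper bound I would apply the Cauchy--Schwarz inequality to the bilinear form $(U,V)\mapsto \mathbb{E}[UV]$ entrywise, giving $(\mathbb{E}[Z_k Z_l])^2 \le \mathbb{E}[Z_k^2]\,\mathbb{E}[Z_l^2]$, whence $\|\mathcal{P}_t\|_{\rm F}^2 \le \sum_{k,l}\mathbb{E}[Z_k^2]\,\mathbb{E}[Z_l^2] = \left(\sum_k \mathbb{E}[Z_k^2]\right)^2 = \left(\mathbb{E}[\|\hat X_t - \bar x_t\|^2]\right)^2$, which is the right-hand inequality. Equivalently, one may introduce an independent copy $Z'$ of $Z$, note $\|\mathcal{P}_t\|_{\rm F}^2 = \mathbb{E}[\langle Z,Z'\rangle^2]$ by Fubini, and bound $\langle Z,Z'\rangle^2 \le \|Z\|^2\|Z'\|^2$ pointwise together with independence, exactly paralleling the estimate used in Lemma~\ref{ControlFrobeniusNorm}.

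For the lower bound I would retain only the diagonal terms and use Cauchy--Schwarz over the $N_x$ indices: $\|\mathcal{P}_t\|_{\rm F}^2 \ge \sum_k (\mathbb{E}[Z_k^2])^2 \ge N_x^{-1}\left(\sum_k \mathbb{E}[Z_k^2]\right)^2 = N_x^{-1}\left(\mathbb{E}[\|\hat X_t - \bar x_t\|^2]\right)^2$, i.e.\ $\|\mathcal{P}_t\|_{\rm F} \ge N_x^{-1/2}\,\mathbb{E}[\|\hat X_t - \bar x_t\|^2]$. Alternatively, diagonalizing the symmetric positive semidefinite matrix $\mathcal{P}_t$ with eigenvalues $\lambda_k\ge 0$ one has $\mbox{tr}\,(\mathcal{P}_t) = \sum_k \lambda_k$ and $\|\mathcal{P}_t\|_{\rm F}^2 = \sum_k \lambda_k^2$, and $\left(\sum_k \lambda_k\right)^2 \le N_x \sum_k \lambda_k^2$ yields the same bound. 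Note that the factor $1/\sqrt{M}$ appearing in Lemma~\ref{ControlFrobeniusNorm} is here replaced by $1/\sqrt{N_x}$, since the number of summands is now the state dimension rather than the ensemble size.

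There is essentially no obstacle in the argument itself: it is Cauchy--Schwarz applied twice, in direct analogy with Lemma~\ref{ControlFrobeniusNorm}. The only point requiring care is the square-integrability of $\hat X_t - \bar x_t$, and hence the mere existence of $\mathcal{P}_t$, for $t\le\tau$; this is exactly what the finite-second-moment hypothesis on $\hat X_0$ and the local well-posedness of \eqref{EKB1a} up to $\tau$ provide, and the resulting bounds hold pathwise, independently of the realization of $\{Y_s\}$.
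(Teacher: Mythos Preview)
Your proposal is correct and follows essentially the same route as the paper: entrywise Cauchy--Schwarz for the upper bound, and retention of the diagonal terms for the lower bound. In fact your lower-bound argument is slightly more explicit than the paper's, which stops at $\|\mathcal{P}_t\|_{\rm F}^2 \ge \sum_k \mathbb{E}[Z_k^2]^2$ and leaves the final Cauchy--Schwarz step over the $N_x$ indices implicit.
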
 

\begin{proof} 
Similar to the proof of Lemma \ref{ControlFrobeniusNorm}:   

\medskip 
\noindent 
Upper bound: 
\begin{equation}
\begin{aligned} 
\|{\cal P}_t\|^2_{\rm F} & = \sum_{k,l} \mathbb{E}\left[ \left(\hat{X}_t - \bar{x}_t\right)(k) \left( \hat{X}_t - \bar{x}_t\right)(l) \right]^2 \\
& \le \sum_{k,l} \mathbb{E}\left[ \left(\hat{X}_t - \bar{x}_t\right)^2(k)\right] 
\mathbb{E}\left[ \left( \hat{X}_t - \bar{x}_t\right)^2(l) \right]
= \mathbb{E}\left[\|\hat{X}_t - \bar{x}_t\|^2\right]^2 \, . 
\end{aligned} 
\end{equation}

\noindent 
Lower bound: 
\begin{equation}
\begin{aligned} 
\|{\cal P}_t\|^2_{\rm F}  = \sum_{k,l} \mathbb{E}\left[ \left(\hat{X}_t - \bar{x}_t\right)(k) 
\left( \hat{X}_t - \bar{x}_t\right)(l) \right]^2 
 \ge \sum_k \mathbb{E}\left[ \left(\hat{X}_t - \bar{x}_t\right)^2(k)\right]^2 \,.
\end{aligned} 
\end{equation}
\end{proof}

\begin{lemma} 
\label{lemma2} 
For all $t < \tau$ there exists some finite constant $C_{4} (t)$ - independent of $\{Y_s\}$ - such that 
\begin{equation}
\label{lemma2:eq1}
\sup_{0\le s\le t} \mathbb{E}\left[ \|\hat{X}_s - \bar{x}_s\|^2\right] \le C_{4} (t) \, . 
\end{equation}
\end{lemma}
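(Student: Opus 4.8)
The plan is to carry out, at the level of the mean-field equation \eqref{EKB1a}, the same $\omega$-wise energy estimate used in Section~\ref{sec-problem-well-posedness} to produce \eqref{UniformMUpperBound}; Lemma~\ref{lemma2} is precisely the mean-field analogue of Remark~\ref{UnifMBds}. Write $v_t := \mathbb{E}\left[\|\hat X_t - \bar x_t\|^2\right]$, the expectation being taken with respect to the conditional law $\hat\pi_t$, i.e.~for a fixed typical observation path $\{Y_s\}$. Subtracting the mean equation \eqref{MFm} from \eqref{EKB1a}, the stochastic integral ${\rm d}Y_t$ enters $\hat X_t$ and $\bar x_t$ with the same coefficient ${\cal Q}_t R^{-1}$ and therefore cancels, so for $t<\tau$ the difference $\hat X_t - \bar x_t$ solves the \emph{random ordinary} differential equation whose drift is $\left(f(\hat X_t) - \bar f_t\right) + D\,{\cal P}_t^{-1}\left(\hat X_t - \bar x_t\right) - \tfrac12 {\cal Q}_t R^{-1}\left(h(\hat X_t) - \bar h_t\right)$. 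Since this process has zero quadratic variation, the ordinary chain rule gives $\tfrac12\tfrac{{\rm d}}{{\rm d}t}\|\hat X_t - \bar x_t\|^2$ as the inner product of $\hat X_t - \bar x_t$ with that drift.

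Next I would take expectations term by term. First, since $\mathbb{E}\left[\hat X_t - \bar x_t\right] = 0$ and $f(\bar x_t) - \bar f_t$ is $\mathcal{Y}_t$-measurable, $\mathbb{E}\left\langle \hat X_t - \bar x_t, f(\hat X_t) - \bar f_t\right\rangle = \mathbb{E}\left\langle \hat X_t - \bar x_t, f(\hat X_t) - f(\bar x_t)\right\rangle \le L_+ v_t$, where $L_+$ is the upper dissipativity constant of the (globally Lipschitz) drift $f$. Second, because ${\cal P}_t$ is invertible for $t<\tau$, $\mathbb{E}\left\langle \hat X_t - \bar x_t, D\,{\cal P}_t^{-1}(\hat X_t - \bar x_t)\right\rangle = \mbox{tr}\left(D\,{\cal P}_t^{-1}{\cal P}_t\right) = \mbox{tr}(D)$. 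Third, $\mathbb{E}\left\langle \hat X_t - \bar x_t, {\cal Q}_t R^{-1}(h(\hat X_t) - \bar h_t)\right\rangle = \mbox{tr}\left({\cal Q}_t R^{-1}{\cal Q}_t^{\rm T}\right) \ge 0$ since $R^{-1}$ is positive definite, so this contribution only helps and can be dropped. Putting these together yields
\[
\tfrac12 \frac{{\rm d}}{{\rm d}t} v_t \le L_+ v_t + \mbox{tr}(D)\, , \qquad t < \tau\, .
\]

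Grönwall's lemma then gives $v_t \le e^{2L_+ t}\bigl(v_0 + \mbox{tr}(D)/L_+\bigr)$ for $t<\tau$ (with the obvious modification $v_t \le v_0 + 2t\,\mbox{tr}(D)$ if $L_+\le 0$), exactly as in \eqref{UniformMUpperBound}; here $v_0 = \mathbb{E}\left[\|\hat X_0 - \bar x_0\|^2\right] < \infty$ by the standing assumption that $\hat X_0$ has finite second moments. Taking the supremum over $s\in[0,t]$ defines the constant $C_{4}(t)$, which is manifestly independent of $\{Y_s\}$; combined with Lemma~\ref{lemma1} it also bounds $\sup_{0\le s\le t}\|{\cal P}_s\|_{\rm F}$. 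The only points requiring a little care are routine: the interchange of $\tfrac{{\rm d}}{{\rm d}t}$ and $\mathbb{E}$ is avoided by working with the integrated identity and applying Grönwall's lemma in integral form, and all manipulations are local in time on $[0,\tau)$, using only that ${\cal P}_s$ is invertible there — no quantitative lower bound on $\lambda^{\rm min}({\cal P}_s)$ (Lemma~\ref{lemma3}) is needed for this estimate, nor is the Lipschitz continuity of $h$ beyond what ensures ${\cal Q}_t$ is well defined. There is thus no genuine obstacle; the lemma is the mean-field mirror of the a priori bound already established for the finite ensemble.
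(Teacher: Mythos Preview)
Your proof is correct and follows essentially the same route as the paper: subtract \eqref{MFm} from \eqref{EKB1a} to obtain an ODE for $\hat X_t-\bar x_t$, compute the three contributions to $\tfrac{{\rm d}}{{\rm d}t}\mathbb{E}\bigl[\|\hat X_t-\bar x_t\|^2\bigr]$, identify them as $\le 2L_+ v_t$, $2\,\mbox{tr}(D)$, and $-\|R^{-1/2}{\cal Q}_t^{\rm T}\|_{\rm F}^2\le 0$ respectively, and apply Gr\"onwall. The paper's argument is the same, down to the explicit constant $e^{2L_+ t}\bigl(v_0+\mbox{tr}(D)/L_+\bigr)$ you obtain.
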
 

\begin{proof}  
The difference $\hat{X}_t - \bar{x}_t$ satisfies the ordinary differential equation 
\begin{equation}
\frac{\rm d}{{\rm d} t}\left( \hat{X}_t - \bar{x}_t\right) 
= \left (f(\hat{X}_t) - \bar{f}_t\right)  + D{\cal P}_t^{-1} \left( \hat{X}_t - \bar{x}_t\right) - \frac 12 {\cal Q}_t R^{-1} \left( h(\hat{X}_t) - \bar{h}_t\right)  
\end{equation}
up to time $\tau$ so that for $t <  \tau$ 
\begin{equation}
\label{L2NormMFProcess} 
\begin{aligned} 
\frac {\rm d}{{\rm d} t} \mathbb{E}\left[ \|\hat{X}_t - \bar{x}_t\|^2\right] 
& = 2\mathbb{E}\left[ \langle f(\hat{X}_t) - \bar{x}_t , \hat{X}_t - \bar{x}_t \rangle \right] 
 + 2\mathbb{E}\left[ \langle D{\cal P}_t^{-1}\left( \hat{X}_t - \bar{x}_t\right) , \hat{X}_t - \bar{x}_t 
   \rangle \right] \\ 
   & \qquad - \mathbb{E}\left[ \langle {\cal Q}_t R^{-1} \left( h(\hat{X}_t) - \bar{h}_t\right), 
       \hat{X}_t - \bar{x}_t\rangle \right] \\ 
& \le 2 L_+ \mathbb{E}\left[\|\hat{X}_t - \bar{x}_t\|^2\right] + 2 \mbox{ tr}\,(D) 
\end{aligned} 
\end{equation}
thereby using 
\begin{equation}
\mathbb{E}\left[ \langle {\cal Q}_t R^{-1} \left( h(\hat{X}_t) - \bar{h}_t\right), 
       \hat{X}_t - \bar{x}_t\rangle \right] = \|R^{-1/2}{\cal Q}_t^{\rm T}\|_{\rm F}^2 \ge 0\, . 
\end{equation}
This implies the same bound  
\begin{equation}
\mbox{Var}\,(\hat{X}_t) := \mathbb{E}\left[ \|\hat{X}_t - \bar{x}_t\|^2\right] 
\le e^{2L_+ t} \left( \mathbb{E}\left[ \|\hat{X}_0 - \bar{x}_0\|^2\right] + \frac{\mbox{tr}\,(D)}{L_+}\right)  
\end{equation}
as stated in Remark \ref{UnifMBds} for the EnKBF for $h(x) = x$, therefore, 
\begin{equation} 
\sup_{0\le s\le t} \mbox{Var}\,(\hat{X}_s) 
= \sup_{0\le s\le t} \mathbb{E}\left[ \|\hat{X}_s - \bar{x}_s\|^2\right] 
\le C_{4} (t)
\end{equation}
for some finite constant $C_{4} (t)$ depending on $t$. Note that 
$C_{4} (t)$ clearly is independent of $\{Y_s\}$. 
\end{proof}

\begin{lemma} 
\label{lemma3} 
Let $\|f\|_{\rm Lip}^2 < 2\lambda^{\min} (D) \|R^{-1}\|_{\rm F} \|h\|^2_{\rm Lip}$. If 
\begin{equation}
\lambda^{\rm min} ({\cal P}_0 ) \ge \kappa_- :=  
\frac{2\lambda^{\min} (D)\|R^{-1}\|_{\rm F} \|h\|^2_{\rm Lip} - \|f\|_{\rm Lip}^2} 
{2\|R^{-1}\|_{\rm F}^2 \|h\|^4_{\rm Lip} C_{4} (t)}\, , 
\end{equation}
where $C_{4} (t)$ is the upper bound (\ref{lemma2:eq1}) obtained in the previous 
Lemma \ref{lemma2}, then $\lambda^{\rm min} ({\cal P}_s)\ge\kappa_-$ for all $s < \tau\wedge t$. In particular, 
$\tau > t$. 
\end{lemma}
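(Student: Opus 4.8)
The plan is to track the smallest eigenvalue of ${\cal P}_t$ along its evolution equation \eqref{MFP}, in exactly the spirit of Lemmas \ref{lem-2}--\ref{lem-3}, and to convert the Lipschitz bounds on $f,h$ together with the variance bound of Lemma \ref{lemma2} into a scalar differential inequality that keeps $\lambda^{\rm min}({\cal P}_s)$ above $\kappa_-$. Note first that \eqref{MFP} contains no ${\rm d}Y_t$-term, so ${\cal P}_t$ is a deterministic function of $t$ (the observations enter only the mean equation \eqref{MFm}); this is what makes the resulting bound, and hence $\tau$, independent of $\{Y_s\}$. Since ${\cal P}_t$ solves \eqref{MFP} on $[0,\tau)$ it is differentiable there, and, following \cite{sr:DE06}, we write ${\cal P}_t = Q_t^{\rm T}\Lambda_t Q_t$ with $\Lambda_t$ differentiable; letting $j$ be the index realizing $\lambda^{\rm min}({\cal P}_t)=(\Lambda_t)_{jj}$ and $v=Q_t^{\rm T}e_j$ the corresponding unit eigenvector, the evolution law for the diagonal of $\Lambda_t$ gives, for a.e.\ $s\in[0,\tau\wedge t)$,
\[
\frac{{\rm d}}{{\rm d}s}\lambda^{\rm min}({\cal P}_s)=v^{\rm T}U_s v+2\,v^{\rm T}Dv-\big\|R^{-1/2}{\cal Q}_s^{\rm T}v\big\|^2,
\]
where $U_s$ is the symmetrized drift term of \eqref{MFP}, in which $\bar f_s$ may be replaced by $f(\bar x_s)$ since $\mathbb{E}[\hat X_s-\bar x_s]=0$. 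As in Section \ref{sec-accuracy}, the possible non-smoothness of $s\mapsto\lambda^{\rm min}({\cal P}_s)$ at eigenvalue crossings is harmless: the map is locally Lipschitz and the identity holds a.e., which is all the comparison step below requires (alternatively, apply Danskin's theorem to $\lambda^{\rm min}({\cal P}_s)=\min_{\|u\|=1}u^{\rm T}{\cal P}_s u$).

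The crucial point is to bound all three terms \emph{in terms of $\lambda^{\rm min}({\cal P}_s)$ itself}, using that $v$ is the bottom eigenvector, so that $\mathbb{E}[\langle\hat X_s-\bar x_s,v\rangle^2]=v^{\rm T}{\cal P}_s v=\lambda^{\rm min}({\cal P}_s)$. Cauchy--Schwarz, $\|f(\hat X_s)-f(\bar x_s)\|\le\|f\|_{\rm Lip}\|\hat X_s-\bar x_s\|$ and $\mbox{Var}(\hat X_s)\le C_{4}(t)$ (Lemma \ref{lemma2}) give
\[
v^{\rm T}U_s v=2\,\mathbb{E}\big[\langle f(\hat X_s)-f(\bar x_s),v\rangle\langle\hat X_s-\bar x_s,v\rangle\big]\ge-2\|f\|_{\rm Lip}\,C_{4}(t)^{1/2}\,\lambda^{\rm min}({\cal P}_s)^{1/2};
\]
the diffusion term obeys $2\,v^{\rm T}Dv\ge2\lambda^{\rm min}(D)$; and, writing ${\cal Q}_s^{\rm T}v=\mathbb{E}[(h(\hat X_s)-\bar h_s)\langle\hat X_s-\bar x_s,v\rangle]$, a second Cauchy--Schwarz with $\mathbb{E}[\|h(\hat X_s)-\bar h_s\|^2]\le\|h\|_{\rm Lip}^2C_{4}(t)$ and $\|R^{-1}\|\le\|R^{-1}\|_{\rm F}$ yields $\|R^{-1/2}{\cal Q}_s^{\rm T}v\|^2\le\|R^{-1}\|_{\rm F}\|h\|_{\rm Lip}^2C_{4}(t)\,\lambda^{\rm min}({\cal P}_s)$. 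Substituting and bounding the $f$-term by Young's inequality, $2\|f\|_{\rm Lip}C_{4}(t)^{1/2}\lambda^{1/2}\le\|f\|_{\rm Lip}^2/(\|R^{-1}\|_{\rm F}\|h\|_{\rm Lip}^2)+\|R^{-1}\|_{\rm F}\|h\|_{\rm Lip}^2C_{4}(t)\,\lambda$, produces the closed scalar inequality
\[
\frac{{\rm d}}{{\rm d}s}\lambda^{\rm min}({\cal P}_s)\ge2\lambda^{\rm min}(D)-\frac{\|f\|_{\rm Lip}^2}{\|R^{-1}\|_{\rm F}\|h\|_{\rm Lip}^2}-2\|R^{-1}\|_{\rm F}\|h\|_{\rm Lip}^2C_{4}(t)\,\lambda^{\rm min}({\cal P}_s)=2\|R^{-1}\|_{\rm F}\|h\|_{\rm Lip}^2C_{4}(t)\big(\kappa_--\lambda^{\rm min}({\cal P}_s)\big),
\]
with $\kappa_-$ exactly as in the statement; the hypothesis $\|f\|_{\rm Lip}^2<2\lambda^{\rm min}(D)\|R^{-1}\|_{\rm F}\|h\|_{\rm Lip}^2$ is precisely what makes $\kappa_->0$.

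A Gronwall/comparison step then finishes: with $\beta:=2\|R^{-1}\|_{\rm F}\|h\|_{\rm Lip}^2C_{4}(t)>0$, multiplying by $e^{\beta s}$ and integrating gives $\lambda^{\rm min}({\cal P}_s)\ge e^{-\beta s}\lambda^{\rm min}({\cal P}_0)+\kappa_-(1-e^{-\beta s})$ on $[0,\tau\wedge t)$, so $\lambda^{\rm min}({\cal P}_0)\ge\kappa_-$ forces $\lambda^{\rm min}({\cal P}_s)\ge\kappa_-$ throughout. Since $\tau$ is by definition the first time ${\cal P}_\tau$ becomes singular, continuity of ${\cal P}_s$ then rules out $\tau\le t$, i.e.\ $\tau>t$. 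I expect the only real obstacle to be engineering the \emph{self-improving} inequality with the sharp constant $\kappa_-$: this rests on choosing $v$ as the bottom eigenvector so that $\mathbb{E}[\langle\hat X_s-\bar x_s,v\rangle^2]=\lambda^{\rm min}({\cal P}_s)$, on splitting the cross term via Young's inequality with the precise weight $\|R^{-1}\|_{\rm F}\|h\|_{\rm Lip}^2$, and on the routine care needed at eigenvalue crossings; everything else is the Riccati-comparison machinery already used in Section \ref{sec-accuracy}.
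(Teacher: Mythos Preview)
Your argument is correct and reproduces exactly the paper's estimates, including the Young-inequality split with weight $\|R^{-1}\|_{\rm F}\|h\|_{\rm Lip}^2$ that yields the sharp constant $\kappa_-$. The only difference is organizational: you diagonalize ${\cal P}_s$ and track $\lambda^{\rm min}({\cal P}_s)$ directly (inheriting the eigenvalue-crossing technicality from Section~\ref{sec-accuracy}), whereas the paper fixes an arbitrary unit vector $v$, derives the same differential inequality for the smooth map $s\mapsto\langle{\cal P}_s v,v\rangle$, shows $\langle{\cal P}_s v,v\rangle\ge\kappa_-$ for every such $v$, and only then takes the infimum over $v$---this sidesteps the crossing issue entirely but is otherwise identical to what you wrote.
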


\begin{proof}  
We will use the representation $\lambda^{\rm min}({\cal P}_t) = \inf_{\|v\|=1} 
\langle {\cal P}_t v, v\rangle$. So fix $v$ with $\|v\| = 1$. Then  
\begin{equation}
\label{SpectralRadiusMFProcess} 
\frac {\rm d}{{\rm d}t} \langle {\cal P}_t v, v\rangle = 2\mathbb{E}\left[ \langle f(\hat{X}_t) 
- \bar f_t, v\rangle \langle\hat{X}_t - \bar{x}_t , v\rangle \right] + 2\langle Dv,v\rangle  - \langle R^{-1} {\cal Q}^{\rm T}_t v, {\cal Q}^{\rm T}_t v\rangle\, . 
\end{equation}  
Using 
\begin{equation}
\langle {\cal P}_t v,v\rangle = \mathbb{E}\left[ \langle \hat{X}_t - \bar{x}_t , v\rangle^2\right] 
\end{equation}
and 
\begin{equation}
\begin{aligned}  
\langle R^{-1} {\cal Q}^{\rm T}_t v, {\cal Q}_t^{\rm T} v\rangle 
& = \langle R^{-1}  \mathbb{E}\left[ (h(\hat{X}_t) - \bar{h}_t)\langle \hat{X}_t - \bar{x}_t, v\rangle\right],
 \mathbb{E}\left[ (h(\hat{X}_t) - \bar{h}_t)\langle \hat{X}_t - \bar{x}_t, v\rangle\right] \rangle \\ 
& \le \|R^{-1}\|_{\rm F} \mathbb{E}\left[ \| h(\hat{X}_t) - \bar{h}_t\|^2 \right] 
\mathbb{E}\left[\langle \hat{X} - \bar{x}_t , v\rangle^2 \right] \\
& \le  \|R^{-1}\|_{\rm F} \|h\|^2_{\rm Lip} \mbox{ Var}\left( \hat{X}_t\right) \langle {\cal P}_t v,v\rangle \, ,
\end{aligned} 
\end{equation}
we can estimate 
\begin{equation}
\begin{aligned}  
\frac {\rm d}{{\rm d}t} \langle {\cal P}_t v, v\rangle 
& \ge -2 \|f\|_{\rm Lip} \mbox{ Var} (\hat{X}_t)^{\frac 12} \|v\| \langle {\cal P}_t v,v\rangle^{\frac 12} 
+ 2\langle Dv,v\rangle - \|h\|^2_{\rm Lip} \|R^{-1}\|_{\rm F} \mbox{Var}(\hat{X}_t) \langle {\cal P}_t v,v\rangle \\ 
& \ge -2 \|f\|_{\rm Lip} C_{4}(t)^{1/2} \langle {\cal P}_t v,v\rangle^{\frac 12} 
+ 2\langle Dv,v\rangle - \|h\|^2_{\rm Lip} \|R^{-1}\|_{\rm F} C_{4} (t) \langle {\cal P}_t v,v\rangle \\ 
& \ge 2\lambda^{\min} (D) - \frac{\|f\|_{\rm Lip}^2}{\|R^{-1}\|_{\rm F} \|h\|^2_{\rm Lip}} -2 \|h\|^2_{\rm Lip} \|R^{-1}\|_{\rm F} C_{4} (t)\langle {\cal P}_t v,v\rangle \, .     
\end{aligned} 
\end{equation}
Now $\lambda^{\rm min} ({\cal P}_0 )\ge  \kappa_-$ implies that 
$\langle {\cal P}_0 v,v\rangle\ge \kappa_-$ and thus $\langle {\cal P}_s v,v\rangle\ge \kappa_-$ 
for all $s< \tau\wedge t$. Hence $\lambda^{\rm min} ({\cal P}_s ) \ge\kappa_- > 0$ for all 
$s < \tau\wedge t$ so that $\tau > t$, since otherwise $\lim_{s\uparrow\tau} 
\lambda^{\rm min} ({\cal P}_s ) = 0$. 
\end{proof} 

\noindent 
The lower bound on $\lambda^{\rm min} ({\cal P}_t )$, locally uniformly w.r.t.~$t$, implies that 
the coefficients of \eqref{EKB1a} are globally Lipschitz on bounded time-intervals, which gives 
existence and uniqueness of strong solutions by standard results for all $t$, a.s. (w.r.t.~the 
distribution of $\{Y_s\}$). 

\subsection{Convergence of the extended EnKBF to the solution of \eqref{EKB1a}} 

We are now ready to state our main result on the asymptotic behavior of the extended EnKBF: 

\begin{theorem} 
\label{MFConvergence} 
Assume that $\|f\|_{\rm Lip}^2 < 2\lambda^{\min} (D) \|R^{-1}\|_{\rm F} \|h\|^2_{\rm Lip}$. Let $\pi_0$ be a 
distribution on $\mathbb R^{N_x}$ with finite support and invertible covariance matrix 
${\cal P}_0$ satisfying $\lambda^{\min} ({\cal P}_0)\ge \kappa_-$, where $\kappa_-$ is as in 
Lemma \ref{lemma3}. Let $\hat{X}_t^i$ be solutions of the mean-field 
process \eqref{EKB1a} with initial conditions $\hat{X}^i_0 = X_0^i$ and $X_0^i$ are iid 
$(\pi_0 )$, so that the solutions $\hat{X}_t^i$ to the mean field processes are iid too. Then 
\begin{equation}
\lim_{M\to\infty} \mathbb{E}\left[\frac 1M \sum_{i=1}^M \|X^i_t - \hat{X}_t^i\|^2  \right]  
= 0 \, . 
\end{equation} 
In particular, 
\begin{equation}
\lim_{M\to\infty} \frac 1M \sum_{i=1}^M g(X_t^i) - \hat{\pi}_t [g] = 0 
\end{equation}
in $L^2 (\mathbb{P})$, hence in probability, for any Lipschitz continuous function $g$. Here, the 
expectation is taken also w.r.t.~the distribution of $\{ Y_s\}$. 
\end{theorem}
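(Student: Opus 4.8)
The plan is to prove the $L^2$ propagation-of-chaos result by a coupling argument, estimating the $L^2$-norm of the difference $Z^i_t := X^i_t - \hat{X}^i_t$ directly via a Gronwall-type inequality on $V^M_t := \frac1M\sum_{i=1}^M \mathbb{E}[\|Z^i_t\|^2]$. First I would write down the differential equation satisfied by $Z^i_t$ by subtracting \eqref{EKB1a} from \eqref{EKB1b}. Crucially, both equations are driven by the \emph{same} observation process $\{Y_s\}$ and the coupling is done so that $\hat{X}^i_0 = X^i_0$, so the stochastic integrals cancel and $Z^i_t$ solves a pathwise ODE (for $t < \tau$, which by Lemma \ref{lemma3} and the hypotheses holds for all $t$). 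The right-hand side splits into three groups of terms: (i) the drift difference $f(X^i_t) - f(\hat{X}^i_t)$, controlled by $\|f\|_{\rm Lip}\|Z^i_t\|$; (ii) the ``diffusion-correction'' difference $D(P^M_t)^{-1}(X^i_t - \bar{x}^M_t) - D\mathcal{P}_t^{-1}(\hat{X}^i_t - \bar{x}_t)$; and (iii) the observation-gain difference $\frac12[\,(Q^M_t R^{-1})(\cdots) - (\mathcal{Q}_t R^{-1})(\cdots)\,]$ where $h(x) = x$ makes these terms resemble group (ii).

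The key technical step is to show that $\|P^M_t - \mathcal{P}_t\|_{\rm F}$, $\|(P^M_t)^{-1} - \mathcal{P}_t^{-1}\|_{\rm F}$, $\|Q^M_t - \mathcal{Q}_t\|_{\rm F}$, $\|\bar{x}^M_t - \bar{x}_t\|$ and $\|\bar{h}^M_t - \bar{h}_t\|$ are each controlled, in $L^2$, by a sum of a term of order $\frac1M$-type sampling error (going to zero as $M\to\infty$ by the law of large numbers at $t=0$, propagated forward) plus a term proportional to $\sqrt{V^M_s}$ integrated over $s\le t$. For instance, writing $X^i_t - \bar{x}^M_t = Z^i_t + (\hat{X}^i_t - \bar{x}_t) + (\bar{x}_t - \bar{x}^M_t)$ and $\bar{x}^M_t - \bar{x}_t = \frac1M\sum_i Z^i_t + (\frac1M\sum_i \hat{X}^i_t - \bar{x}_t)$, one isolates the genuine empirical fluctuation $\frac1M\sum_i \hat{X}^i_t - \bar{x}_t$, whose $L^2$-norm is $O(M^{-1/2})$ since the $\hat{X}^i_t$ are iid with the bounded variance from Lemma \ref{lemma2}. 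The same bookkeeping applies to the empirical covariance $P^M_t$: it equals $\mathcal{P}_t$ plus terms linear or quadratic in the $Z^i$'s plus a pure-sampling fluctuation of $L^2$-size $O(M^{-1/2})$ (again using finite support of $\pi_0$ and Lemma \ref{lemma2} to bound fourth moments). The inversion estimate \eqref{UnifControlInverse} then transfers this to $\|(P^M_t)^{-1} - \mathcal{P}_t^{-1}\|$, once one has the uniform-in-$M$ lower bound on $\lambda^{\min}(P^M_t)$ — here one invokes Lemma \ref{lem-3}-type reasoning, or more precisely notes that $\lambda^{\min}(P^M_t) \ge \lambda^{\min}(\mathcal{P}_t) - \|P^M_t - \mathcal{P}_t\| \ge \kappa_- - (\text{small})$, which can be made uniformly bounded below on $[0,t]$ by a bootstrap/stopping-time argument.

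Assembling these, the Cauchy-Schwarz inequality (and Young's inequality to absorb cross terms) yields a closed differential inequality of the form
\begin{equation}
\frac{\rm d}{{\rm d}t} V^M_t \le c_1(t)\, V^M_t + c_2(t)\, \sqrt{V^M_t}\cdot \rho^M(t) + c_3(t)\, \bigl(\rho^M(t)\bigr)^2,
\end{equation}
where $\rho^M(t)\to 0$ as $M\to\infty$ (locally uniformly in $t$) collects all the pure-sampling fluctuation terms and $c_1,c_2,c_3$ are locally bounded deterministic functions depending on $\|f\|_{\rm Lip}$, $\|R^{-1}\|_{\rm F}$, the uniform-in-$M$ bounds on $\|P^M_s\|_{\rm F}$ from \eqref{UniformMUpperBound}, and the lower bound on $\lambda^{\min}(\mathcal{P}_s)$. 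Since $V^M_0 = 0$ by construction of the coupling, Gronwall's lemma gives $V^M_t \le C(t)\sup_{s\le t}(\rho^M(s))^2 \to 0$. The final claim about $\frac1M\sum_i g(X^i_t) - \hat{\pi}_t[g]$ follows by writing it as $\frac1M\sum_i (g(X^i_t) - g(\hat{X}^i_t)) + (\frac1M\sum_i g(\hat{X}^i_t) - \hat{\pi}_t[g])$, bounding the first sum by $\|g\|_{\rm Lip}\sqrt{V^M_t}\to 0$ (Cauchy-Schwarz) and the second by the classical $L^2$ law of large numbers for the iid sequence $g(\hat{X}^i_t)$.

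The main obstacle I anticipate is the handling of the inverse covariance terms $D(P^M_t)^{-1}$ versus $D\mathcal{P}_t^{-1}$: one needs a uniform-in-$M$ and locally-uniform-in-$t$ lower bound on $\lambda^{\min}(P^M_t)$ that does \emph{not} already presuppose $V^M_t$ is small, since that is what we are trying to prove. The clean way around this is a stopping-time/bootstrap argument — define $\tau^M = \inf\{t: \|P^M_t - \mathcal{P}_t\| \ge \kappa_-/2\}$, run the Gronwall estimate on $[0,\tau^M\wedge t]$ where the lower bound $\lambda^{\min}(P^M_t) \ge \kappa_-/2$ holds automatically, conclude $V^M_{\tau^M\wedge t}\to 0$, hence $\|P^M_{\tau^M\wedge t} - \mathcal{P}_{\tau^M\wedge t}\|\to 0$ in probability, which forces $\mathbb{P}(\tau^M \le t)\to 0$, and finally remove the stopping. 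The other mildly delicate point is controlling the $O(M^{-1/2})$ sampling error in $P^M_t$ uniformly on $[0,t]$ rather than pointwise; but since $P^M_t$ and $\mathcal{P}_t$ solve matrix Riccati ODEs driven by empirical versus true moments, a Proposition \ref{PropPathwiseControl}-style Gronwall estimate handles this, reducing everything to the sampling error at $t=0$, which is genuinely $O(M^{-1/2})$ in $L^2$ because $\pi_0$ has finite support (hence all moments finite).
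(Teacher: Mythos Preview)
Your proposal contains a genuine gap at the very first step. You claim that because both \eqref{EKB1b} and \eqref{EKB1a} are driven by the same observation process $\{Y_s\}$, ``the stochastic integrals cancel and $Z^i_t$ solves a pathwise ODE.'' This is false: the coefficient in front of ${\rm d}Y_t$ is $Q_t^M R^{-1}$ in the ensemble equation and $\mathcal{Q}_t R^{-1}$ in the mean-field equation, and these differ. Hence
\[
{\rm d}Z^i_t = (\text{drift terms})\,{\rm d}t + (Q_t^M - \mathcal{Q}_t)R^{-1}\,{\rm d}Y_t,
\]
and since ${\rm d}Y_t = h(X_t^{\rm ref})\,{\rm d}t + R^{1/2}{\rm d}B_t$ contains a genuine Brownian increment, $Z^i_t$ is an SDE, not an ODE. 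Applying It\^o's formula to $\|Z^i_t\|^2$ therefore produces two terms you have not accounted for: a martingale term $\frac{2}{M}\sum_i \langle (Q_t^M - \mathcal{Q}_t)R^{-1/2}{\rm d}B_t, Z^i_t\rangle$ and an It\^o correction $\frac{1}{M}\,\mathrm{tr}\bigl((Q_t^M - \mathcal{Q}_t)R^{-1}(Q_t^M - \mathcal{Q}_t)^{\rm T}\bigr)\,{\rm d}t$. These are precisely terms $IVb$ and $V$ in the paper's proof.

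This omission cascades. Once the martingale term is present, you cannot obtain a closed differential inequality for the deterministic quantity $V_t^M = \mathbb{E}[\frac{1}{M}\sum_i \|Z_t^i\|^2]$ with deterministic coefficients $c_1,c_2,c_3$: the Gronwall factor is random, since it involves $\|h(X_t^{\rm ref})\|$ (from the drift of $Y$) and the empirical second moments $\frac{1}{M}\sum_i \|\hat{X}_t^i\|^2$. The paper handles this by keeping the random factor $U_M(t)$ inside the exponential, applying It\^o's product rule to $e^{-\int_0^t U_M(s)\,{\rm d}s}\frac{1}{M}\sum_i\|Z_t^i\|^2$ so that the martingale disappears upon taking expectation, and then invoking a separate exponential-moment estimate (Lemma~\ref{lemma3a}) on $\int_0^t \frac{1}{M}\sum_i \|\hat{X}_s^i\|^2\,{\rm d}s$ to remove the random weight at the end. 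Your deterministic-Gronwall plan does not survive this; the exponential-moment lemma is not a cosmetic addition but the mechanism that closes the argument. The remaining ingredients you sketch (decomposing $P_t^M - \mathcal{P}_t$ into coupling error plus iid sampling error, the inversion identity \eqref{UnifControlInverse}, the final LLN step for $g$) are correct in spirit and match the paper's Lemma~\ref{lemma4}, but the core analytic structure needs to be rebuilt around the stochastic term.
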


\begin{remark} 
The last theorem implies by general theory that the empirical distribution $\hat{\pi}_t^M$, 
defined in (\ref{EmpiricalDistribution}), of the extended EnKBF with $M$ ensemble members 
converges weakly towards the distribution $\hat{\pi}_t$ of the mean 
field process \eqref{EKB1a} in probability w.r.t.~the distribution of $\{Y_s\}$. 
\end{remark}

\begin{remark} The conditions of Theorem \ref{MFConvergence} are satisfied for fully observed 
processes $h(x) = x$, measurement error covariance matrix $R= \varepsilon I$, $\varepsilon >0$ 
sufficiently small, and full rank diffusion tensor $D$, i.e., for the filtering setting considered 
in Sections \ref{sec-problem-well-posedness} and \ref{sec-accuracy}.
\end{remark}

\begin{proof} (of Theorem \ref{MFConvergence}) It\^o's formula implies that 
\begin{equation}
\label{MFConvergence:eq1}
\begin{aligned} 
{\rm d} \left( \frac 1M \sum_{i=1}^M \|\Delta {X}_t^i\|^2 \right) 
& = \frac 2M \sum_{i=1}^M \langle f(X_t^i) - f(\hat{X}_t^i), \Delta X_t^i \rangle \, {\rm d} t \\
& \qquad + \frac 2M \sum_{i=1}^M \langle D\left( (P_t^{M})^{-1} \left( X_t^i - \bar{x}^M_t \right) - 
\mathcal{P}_t^{-1} \left( \hat{X}_t^i - \bar{x}_t\right)\right), \Delta X_t^i \rangle\, {\rm d} t \\ 
& \qquad - \frac 1M \sum_{i=1}^M \langle Q^M_t R^{-1} \left( h(X_t^i) + \bar{h}^M_t\right) - 
\mathcal{Q}_t R^{-1} \left( h(\hat{X}_t^i) + \bar{h}_t\right) , \Delta X_t^i \rangle\, {\rm d} t \\
& \qquad  + \frac 2M \sum_{i=1}^M \langle  \left( Q^M_t - \mathcal{Q}_t \right)
R^{-1}\, {\rm d} Y_t, \Delta X^i_t ,\rangle \\
& \qquad 
+ \frac 1M \mbox{tr}\left( \left(Q_t^M - \mathcal{Q}_t\right)R^{-1}\left( Q_t^M - 
\mathcal{Q}_t\right)^{\rm T} \right)\, {\rm d} t  \\ 
& = I + \ldots + V\, ,
\end{aligned} 
\end{equation} 
with the abbreviation $\Delta X_t^i = X_t^i - \hat X_t^i$.
Our aim is to estimate the right hand side of \eqref{MFConvergence:eq1} in terms of 
$\frac 1M \sum_{i=1}^M \|X_t^i -\hat{X}_t^i\|^2$ and then to apply the Gronwall inequality. This 
requires in particular to control the stochastic integral $IV$ w.r.t.~the observation $\{Y_s\}$. 
Using the decomposition 
${\rm d}Y_t = h(X_t^{\rm ref})\, {\rm d}t + R^{1/2}{\rm d} B_t$ we can split up the stochastic integral 
$IV$ into 
\begin{equation}
\begin{aligned} 
\frac 2M \sum_{i=1}^M \langle  \left( Q^M_t - \mathcal{Q}_t \right)
R^{-1}{\rm d}Y_t, \Delta X^i_t \rangle
& = \frac 2M \sum_{i=1}^M \langle  \left( Q^M_t - \mathcal{Q}_t \right)
R^{-1}h\left( X_t^{\rm ref}\right), \Delta X^i_t \rangle {\rm d}t \\ 
& \qquad + \frac 2M \sum_{i=1}^M \langle  \left( Q^M_t - \mathcal{Q}_t \right)
R^{-1/2}{\rm d}B_t, \Delta X^i_t \rangle \\
& = IVa + IVb\, . 
\end{aligned} 
\end{equation} 
We can now estimate the right hand side of the above equation for $t\le T$ from above as follows 
\begin{equation}
\begin{aligned}
{\rm d}\left( \frac 1M \sum_{i=1}^M \|X_t^i -\hat{X}_t^i\|^2 \right) 
&\le U_M(t)\left( \frac 1M \sum_{i=1}^M \|X_t^i - \hat{X}_t^i\|^2  + R_M (t)\right) {\rm d} t \\
& \qquad 
+ \frac 2M \sum_{i=1}^M \langle  \left( Q^M_t - \mathcal{Q}_t \right)\, 
R^{-1/2} {\rm d}B_t, X^i_t - \hat{X}_t^i\rangle 
\end{aligned}
\end{equation}
thereby keeping the stochastic integral $IVb$. Here, 
\begin{equation}
\begin{aligned} 
U_M (t) & = C_T\left( 1 + \left\| h\left( X^{\rm ref}_{0:T}\right)\right\|_\infty^2 
 + \frac 1M \sum_{i=1}^M \|\hat{X}_t^i\|^2 \right) \\ 
& \qquad\qquad 
\left( 1 + \frac 1M \sum_{i=1}^M \|X_t^i- \bar{x}_t^M\|^2 + \frac 1M \sum_{i=1}^M \|\hat{X}_t^i- 
\bar{x}_t\|^2\right)\, ,  
\end{aligned} 
\end{equation}
with some finite constant $C_T$, and a remainder $R_M (t)$ that converges to zero in 
$L^p (\mathbb{P})$ as $M\to\infty$ for all finite $p$.

Indeed, this is obvious for term $I$, using that $f$ is globally Lipschitz, for terms $III$, $IVa$ and $V$ 
using \eqref{lemma4:eq2} in Lemma \ref{lemma4} in the Appendix and for term $II$ it follows from 
\eqref{lemma4:eq1} in Lemma \ref{lemma4} in the Appendix in combination with 
\eqref{UnifControlInverse}. 

Applying It\^o's product formula to the process 
$e^{-\int_0^t U_M (s){\rm d}s}\frac 1M \sum_{i=1}^M \|X_t^i -\hat{X}_t^i\|^2$ and taking expectations w.r.t.~the distribution of $\{Y_s\}$, we arrive at the following estimate 
\begin{equation}
\begin{aligned} 
\mathbb{E}\left[ e^{-\int_0^t U_M (s){\rm d}s} \frac 1M \sum_{i=1}^M \|X_t^i -\hat{X}_t^i\|^2 \right]
&  \le C_T \mathbb{E}\left[ \int_0^t e^{-\int_0^s U_M (r){\rm d}r} U_M (s) R_M (s){\rm d}s \right]     
\end{aligned}
\end{equation} 
for $t\le T$. Since $U_M R_M$ is bounded by some finite constant plus some power of $ \frac 1M \sum_{i=1}^M \|\hat{X}_t^i\|^2$ and the latter one has some finite exponential 
moment by Lemma \ref{lemma3a} below, it follows that 
\begin{equation}
\lim_{M\to\infty} \mathbb{E} 
\left[ e^{-\alpha_T \int_0^t  \frac 1M \sum_{i=1}^M \|\hat{X}_s^i\|^2 {\rm d}s} 
\frac 1M \sum_{i=1}^M \|X_t^i -\hat{X}_t^i\|^2 \right] = 0 \, , \quad t\le T\, ,    
\end{equation}
for some $\alpha_T > 0$. Now, using Lemma \ref{lemma3a} again, we also may now conclude that 
\begin{equation}
\begin{aligned} 
\lim_{M\to\infty} \mathbb{E}\left[ \frac 1M \sum_{i=1}^M \|X_t^i -\hat{X}_t^i\| \right]^2  
& \le \sup_{M\ge 2} \mathbb{E}\left[ e^{\alpha_T \int_0^t  \frac 1M 
\sum_{i=1}^M \|\hat{X}_s^i\|^2 {\rm d}s}\right]  \\ 
& \qquad \times \lim_{M\to\infty} \mathbb{E}\left[ e^{-\alpha_T \int_0^t  
\frac 1M \sum_{i=1}^M \|\hat{X}_s^i\|^2 {\rm d}s} 
\frac 1M \sum_{i=1}^M \|X_t^i -\hat{X}_t^i\|^2 \right] = 0 \,,
 \end{aligned} 
\end{equation}
for all $t\le T$.  
\end{proof}

%
\section{Numerical example} \label{sec-numerics}
%

\begin{figure}
\begin{center}
\includegraphics[width=0.45\textwidth]{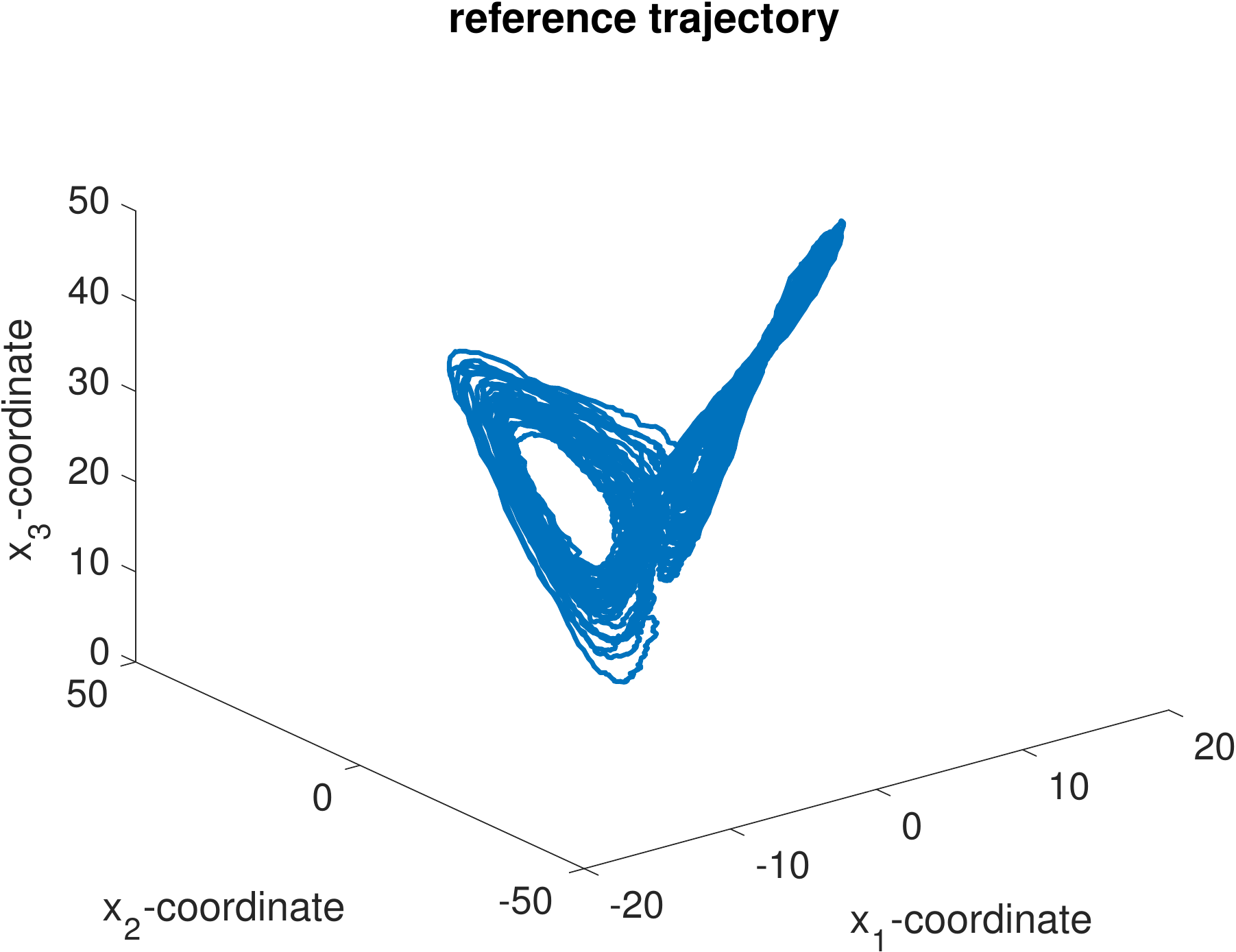} $\quad$ 
\includegraphics[trim={0 6cm 0 6cm},clip,width=0.45\textwidth]{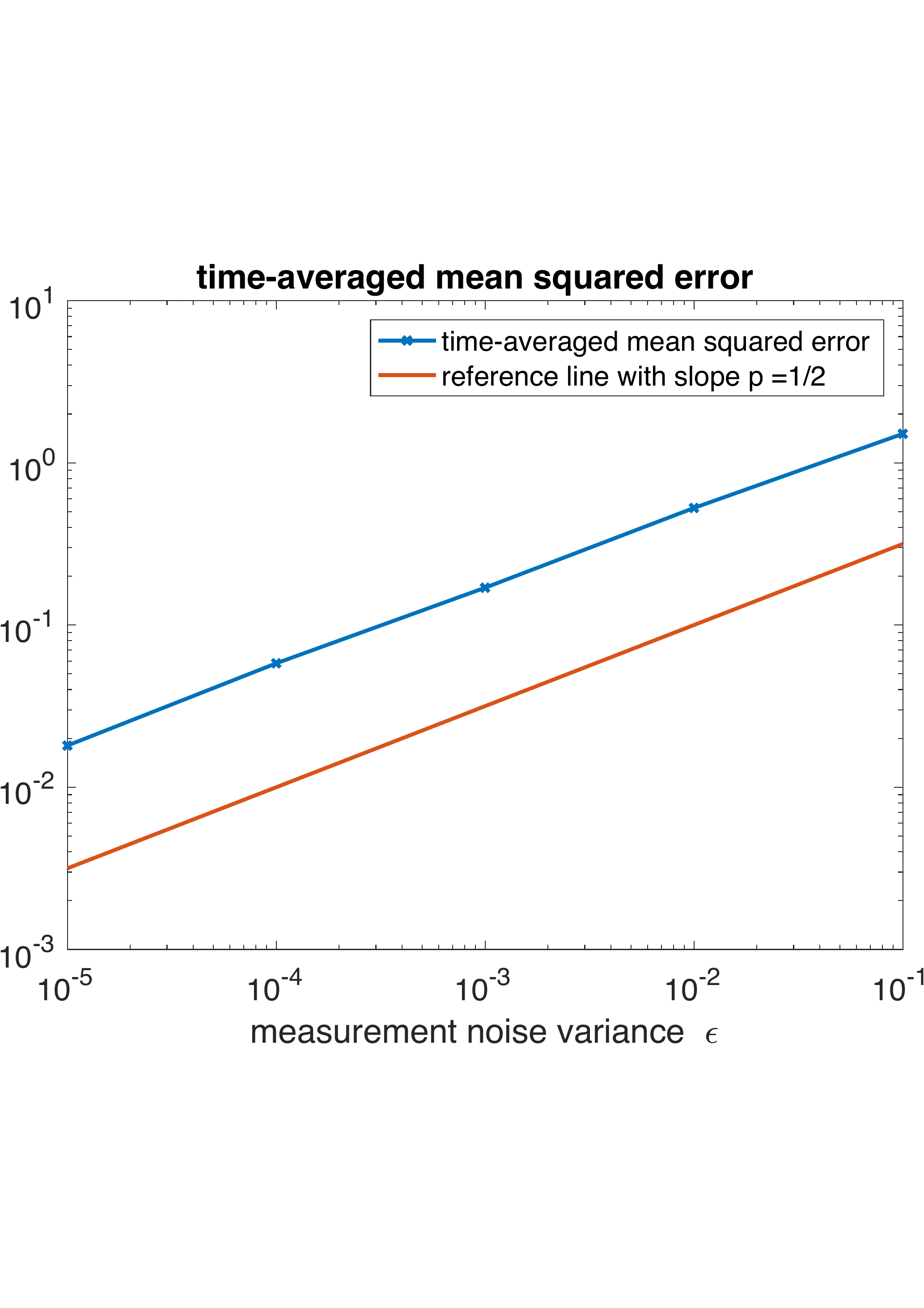}
\end{center}
\caption{Reference trajectory (left panel) and time-averaged mean squared error as a function of the measurement 
error variance $\varepsilon$ (right panel).} 
\label{figure1}
\end{figure}

\begin{figure}
\begin{center}
\includegraphics[trim={0 5.5cm 0 5.5cm},clip,width=0.45\textwidth]{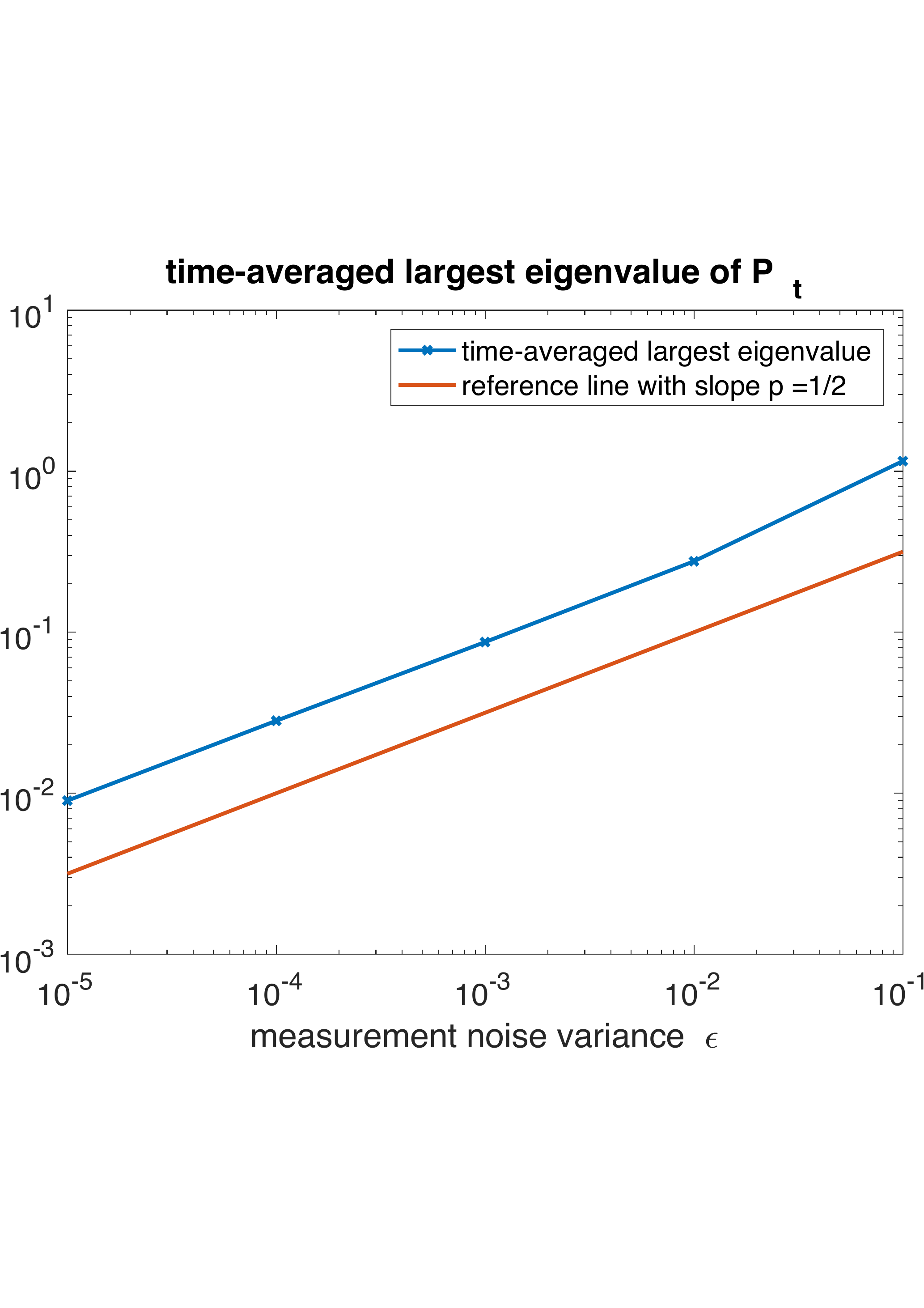} $\quad$
\includegraphics[trim={0 5.5cm 0 5.5cm},clip,width=0.45\textwidth]{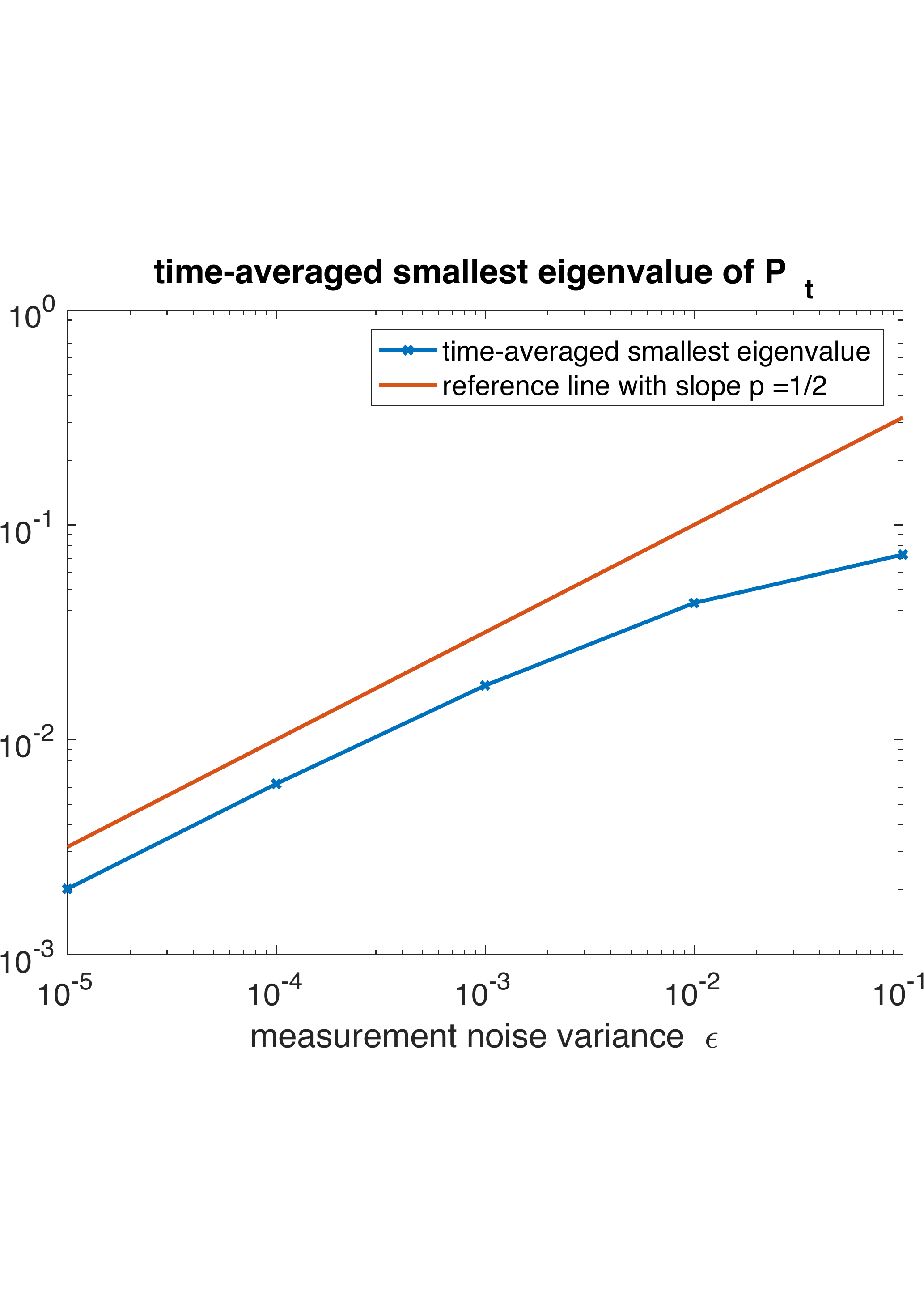} 
\end{center}
\caption{Time-averaged largest (left panel) and smallest (right panel) eigenvalues of $P_t$ as a function of the
measurement error variance $\varepsilon$} \label{figure2}
\end{figure}

We consider the stochastically perturbed Lorenz-63 system \cite{sr:lorenz63,sr:stuart15}, which 
leads to $N_x = 3$, $D=C = I_3$, and drift term given by
\begin{equation}\label{L63_f}
f(x) = \left( \begin{array}{l} 10(x_2-x_1) \\ (28-x_3)x_1 - x_2 \\ x_1 x_2 - \frac{8}{3} x_3 \end{array} \right),
\end{equation}
where $x = (x_1,x_2,x_3)^{\rm T}$. Solutions of the Lorenz-63 system diverge exponentially fast and
filtering is required in order to track a reference solution. Although (\ref{L63_f}) is only locally Lipschitz continuous, the results from this paper are likely to be applicable to the Lorenz-63 system due to 
the existence of a Lyapunov function. 

We apply the EnKBF with ensemble size $M=4$
for values of the measurement error variances $\varepsilon \in \{ 10^{-1},\ldots,10^{-4},10^{-5}\}$. 
The stochastic evolution equations of the EnKBF are solved by the following modified 
Euler-Maruyama scheme 
\begin{equation}
X^i_{n+1} = X_n^i + \Delta t f(X_n^i) + \Delta t (P^M_n)^{-1}(X_n^i-\bar{x}^M_n) -
\frac{1}{2} P_n^M \left(P_n^M + \frac{\varepsilon}{\Delta t} I_3 \right)^{-1} \left(
X_n^i + \bar{x}_n^i - 2\frac{\Delta Y_n}{\Delta t} \right)
\end{equation}
with step-size $\Delta t = 0.00005$ over a total of $10^7$ time-steps. Note that
\begin{equation}
\left(P_n^M + \frac{\varepsilon}{\Delta t} I_3 \right)^{-1}  \approx \frac{\Delta t}{\epsilon} I_3
\end{equation}
for $\Delta t$ sufficiently small and the modification is introduced for numerical stability reasons.
See \cite{sr:akir11} for more details.

The results can be found in Figures \ref{figure1} and \ref{figure2}.  The numerical results are in agreement with our theoretical findings, which predicted an 
${\cal O}(\varepsilon^{1/2})$ behavior of these quantities. While this scaling holds for the 
time-averaged mean squared error and the time-averaged largest eigenvalue of 
$P_t^M$ for the whole range of considered values of $\varepsilon$, the time-averaged smallest eigenvalue truncates slightly off for the larger values of $\varepsilon$. We can also see that there is a gap between the smallest and largest eigenvalues of $P_t^M$ on average.

We repeated the experiment for ensemble sizes of $M=2$ and $M=3$, in which case $P_t^M$ is singular. We still find that the time-averaged mean squared error is roughly of ${\cal O}(\varepsilon^{1/2})$. See Figure \ref{figure3}. The results are in line with those obtained 
in \cite{sr:hunt13} for hyperbolic dynamical systems. We will further investigate the theoretical 
properties of the EnKBF under singular $P_t^M$ in a separate paper. 

\begin{figure}
\begin{center}
\includegraphics[trim={0 6cm 0 6cm},clip,width=0.45\textwidth]{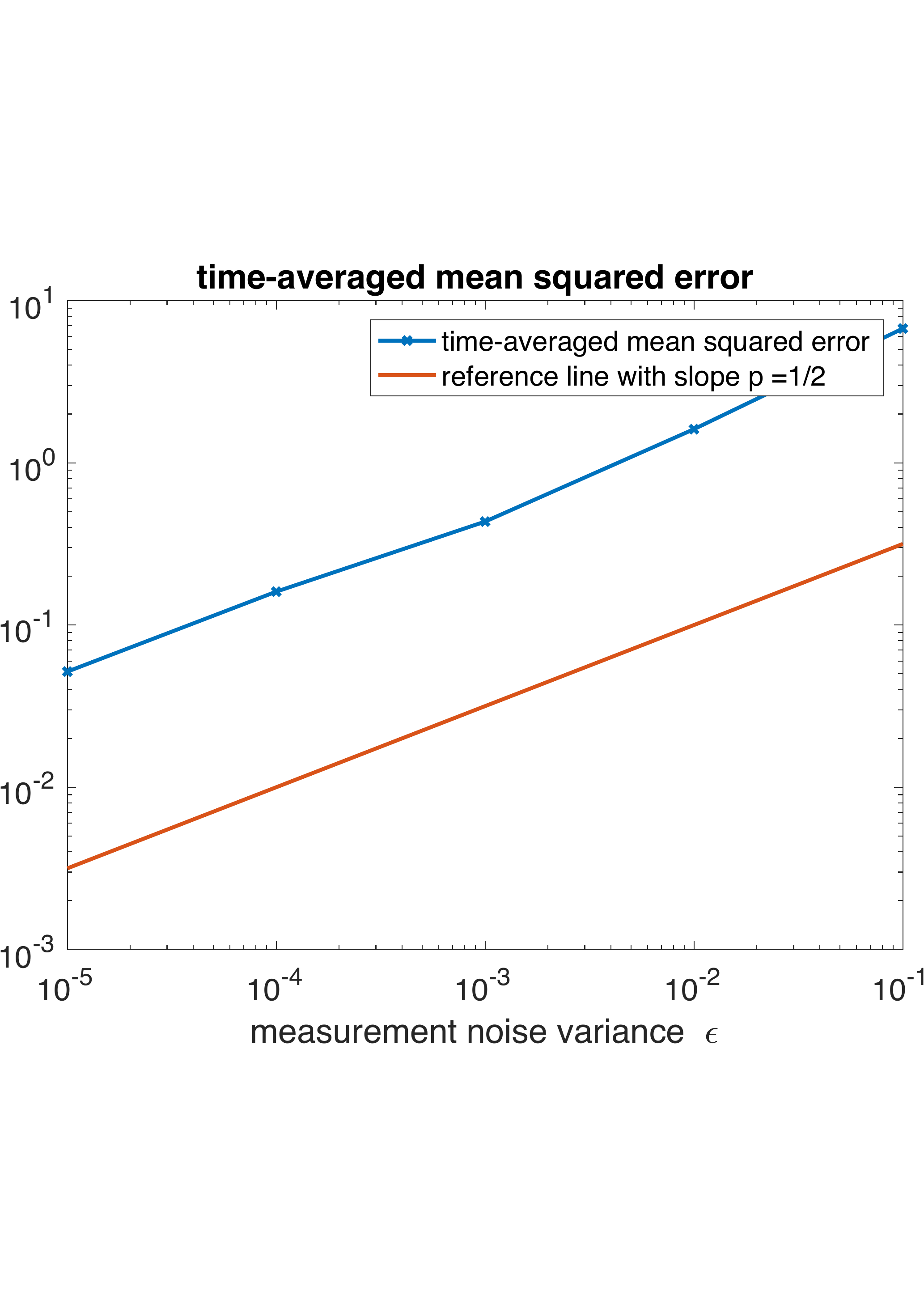} $\quad$ 
\includegraphics[trim={0 6cm 0 6cm},clip,width=0.45\textwidth]{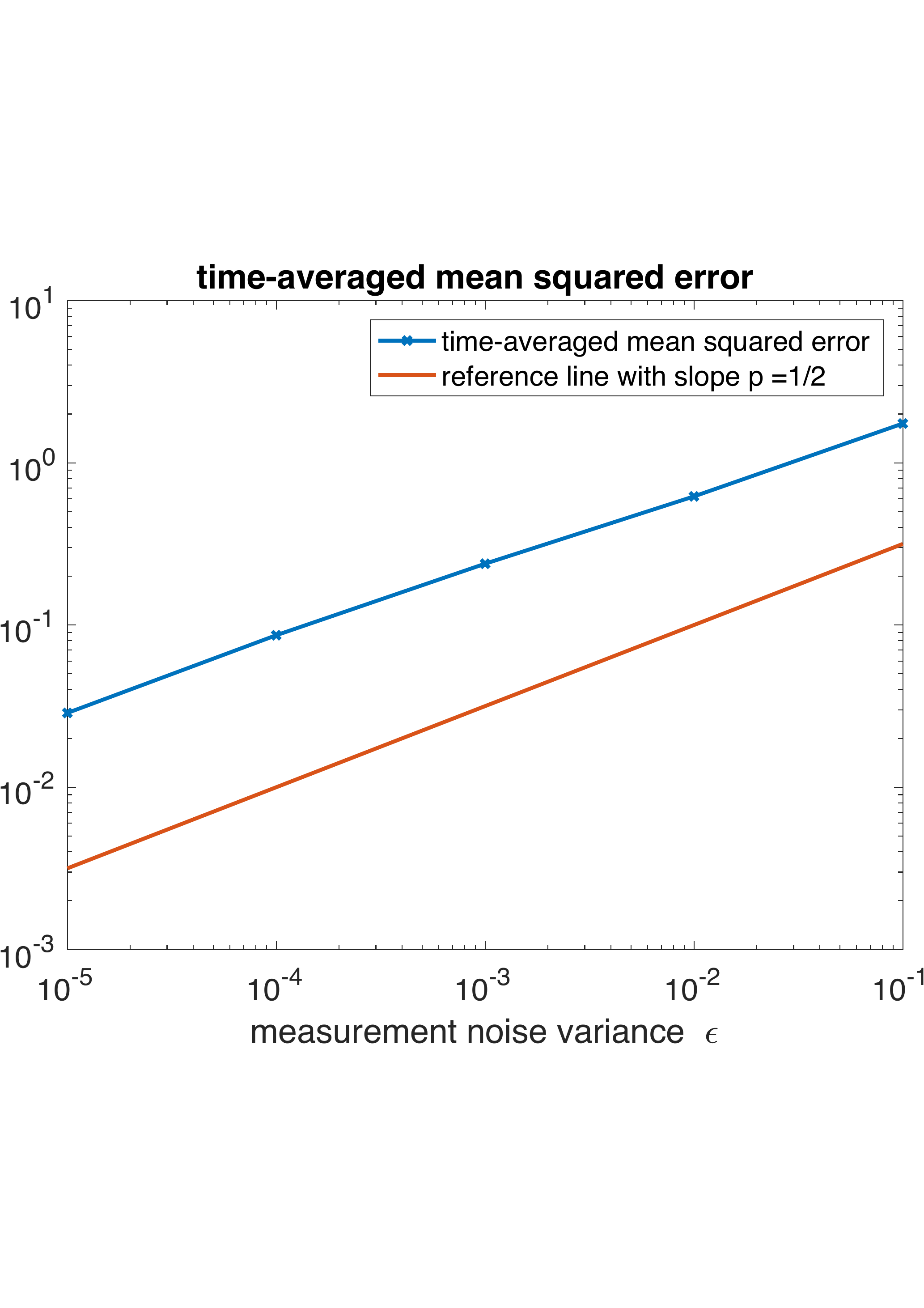}
\end{center}
\caption{Time-averaged mean squared error as a function of the measurement 
error variance $\varepsilon$ for ensemble sizes $M=2$ (left panel) and $M=3$ (right panel).} 
\label{figure3}
\end{figure}

%
\section{Conclusions}
%

In this paper, we have taken first steps towards an understanding of the long-time behavior of the ensemble Kalman-Bucy filter
and have derived limiting mean-field equations. Natural extensions include partially observed processes and 
configurations which lead to singular empirical covariance matrices $P_t^M$.
We also plan to extend our analysis to other ensemble filter algorithms, such as the stochastically perturbed ensemble 
Kalman-Bucy filter and the ensemble transform particle filter. See, for example, \cite{sr:reichcotter15} for more details.

%
\section*{Acknowledgement}
%

This research has been partially funded by 
Deutsche Forschungsgemeinschaft (DFG) through grant 
CRC 1294 \lq\lq Data Assimilation\rq\rq, Project (A02) \lq\lq 
Long-time stability and accuracy of ensemble transform
filter algorithms\rq\rq.

\bibliographystyle{alpha}
\bibliography{survey_paper}


\section*{Appendix: Supplement to the proof of Theorem \ref{MFConvergence}}

The purpose of this Appendix is to provide two Lemmata on the control of 
$\|P_t^M - {\cal P}_t\|_F$ and on the existence of exponential moments of 
$\int_0^t \frac 1M \sum_{i=1}^M \|\hat{X}_s^i\|^2 {\rm d}s$ used in the proof of Theorem 
\ref{MFConvergence}.
\begin{lemma} 
\label{lemma4} 
\begin{equation}
\label{lemma4:eq1}
\| P_t^M - \mathcal{P}_t\|_F 
\le 2 \Sigma (t)\left( \frac 1{M-1} \sum_i \|X_t^i - \hat{X}_t^i\|^2 \right)^{\frac 12}  + R_M (t)
\end{equation}
with $\lim_{M\to\infty} R_M (t)= 0$ a.s. and in $L^1 (\mathbb{P})$. Here
\begin{equation}
\Sigma (t) := \left( \frac 1{M-1} \sum_i \|X_t^i - \bar{x}_t^M\|^2 \right)^{\frac 12} 
+ \left( \frac 1{M-1} \sum_i \|\hat{X}_t^i - \bar{x}_t\|^2 \right)^{\frac 12}\,.
\end{equation}
Similarly, 
\begin{equation}
\label{lemma4:eq2}
\| Q_t^M - \mathcal{Q}_t\|_F 
\le 2(1 + \|h\|_{\rm Lip})\Sigma (t)  \left( \frac 1{M-1} \sum_i \|X_t^i - \hat{X}_t^i\|^2 \right)^{\frac 12}  + S_M (t)
\end{equation}
with $\lim_{M\to\infty} S_M (t) = 0$ a.s.~and in $L^1 (\mathbb{P})$. 
\end{lemma}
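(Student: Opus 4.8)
The plan is to reduce both estimates to a law-of-large-numbers statement for the \emph{mean-field} ensemble together with an elementary bilinear expansion comparing two empirical (cross-)covariance matrices. Throughout I work at a fixed time $t$ below the blow-up time $\tau$ of $\mathcal{P}_t$; under the hypotheses of Theorem~\ref{MFConvergence} and Lemma~\ref{lemma3} this is no restriction, since then $\tau=+\infty$ a.s., so that $\hat{X}_t^i$, $\bar{x}_t$, $\mathcal{P}_t$ and $\mathcal{Q}_t$ are all well defined and, conditionally on $\{Y_s\}$, the $\hat{X}_t^i$ are iid (the map $\hat{X}_0^i\mapsto\hat{X}_t^i$ is the same deterministic one for every $i$). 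I would introduce the empirical covariance and cross-covariance of the mean-field ensemble,
\[
\hat{P}_t^M := \frac{1}{M-1}\sum_{i=1}^M (\hat{X}_t^i - \bar{\hat{x}}_t^M)(\hat{X}_t^i - \bar{\hat{x}}_t^M)^{\rm T}, \quad \hat{Q}_t^M := \frac{1}{M-1}\sum_{i=1}^M (\hat{X}_t^i - \bar{\hat{x}}_t^M)(h(\hat{X}_t^i) - \overline{h(\hat{X})}_t^M)^{\rm T},
\]
with $\bar{\hat{x}}_t^M=\frac1M\sum_i\hat{X}_t^i$ and $\overline{h(\hat{X})}_t^M=\frac1M\sum_i h(\hat{X}_t^i)$, and split $\|P_t^M-\mathcal{P}_t\|_{\rm F}\le\|P_t^M-\hat{P}_t^M\|_{\rm F}+\|\hat{P}_t^M-\mathcal{P}_t\|_{\rm F}$ and analogously for $Q$. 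I then \emph{set} $R_M(t):=\|\hat{P}_t^M-\mathcal{P}_t\|_{\rm F}$ and $S_M(t):=\|\hat{Q}_t^M-\mathcal{Q}_t\|_{\rm F}$.

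The second summand is handled by the strong law of large numbers. The $\hat{X}_t^i$ are iid with $\mathbb{E}\|\hat{X}_t\|^2<\infty$ (finite by Lemma~\ref{lemma2}; under the finite-support hypothesis of Theorem~\ref{MFConvergence} they are even uniformly bounded), hence $\frac1M\sum_i\hat{X}_t^i(\hat{X}_t^i)^{\rm T}\to\mathbb{E}[\hat{X}_t\hat{X}_t^{\rm T}]$ and $\bar{\hat{x}}_t^M\to\bar{x}_t$ a.s., so $\hat{P}_t^M\to\mathcal{P}_t$, i.e.~$R_M(t)\to0$ a.s.; the same argument, now using that $h$ is globally Lipschitz (so $\mathbb{E}\|\hat{X}_t\|\,\|h(\hat{X}_t)\|<\infty$), gives $\hat{Q}_t^M\to\mathcal{Q}_t$, i.e.~$S_M(t)\to0$ a.s. Convergence in $L^1(\mathbb{P})$ then follows from uniform integrability of these degree-two empirical averages, which is immediate under the finite-support assumption and in general obtained by a routine truncation using the second-moment bound of Lemma~\ref{lemma2}. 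This $L^1$-part is the step I expect to require the most care, since it is the only point where more than pathwise algebra enters; everything else is purely algebraic.

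For the first summand I use the bilinear identity. Setting $u_i:=X_t^i-\bar{x}_t^M$ and $v_i:=\hat{X}_t^i-\bar{\hat{x}}_t^M$ (both averaging to zero) and $w_i:=u_i-v_i=(X_t^i-\hat{X}_t^i)-\frac1M\sum_j(X_t^j-\hat{X}_t^j)$, one has $u_iu_i^{\rm T}-v_iv_i^{\rm T}=u_iw_i^{\rm T}+w_iv_i^{\rm T}$, hence by the triangle inequality and Cauchy--Schwarz over $i$,
\[
\|P_t^M-\hat{P}_t^M\|_{\rm F} \le \left( \frac1{M-1}\sum_i\|w_i\|^2 \right)^{1/2}\left[ \left( \frac1{M-1}\sum_i\|u_i\|^2 \right)^{1/2} + \left( \frac1{M-1}\sum_i\|v_i\|^2 \right)^{1/2} \right].
\]
Now $\sum_i\|w_i\|^2\le\sum_i\|X_t^i-\hat{X}_t^i\|^2$ because subtracting the sample mean of $\{X_t^j-\hat{X}_t^j\}_j$ only decreases the sum of squares, and for the same variational reason $\sum_i\|v_i\|^2=\sum_i\|\hat{X}_t^i-\bar{\hat{x}}_t^M\|^2\le\sum_i\|\hat{X}_t^i-\bar{x}_t\|^2$; hence the bracket is at most $\Sigma(t)$ and the prefactor at most $(\frac1{M-1}\sum_i\|X_t^i-\hat{X}_t^i\|^2)^{1/2}$, which yields \eqref{lemma4:eq1} (the constant $2$ being a safe overestimate). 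For \eqref{lemma4:eq2} the same expansion is applied with the second slot replaced by $h(X_t^i)-\bar{h}_t^M$ versus $h(\hat{X}_t^i)-\overline{h(\hat{X})}_t^M$: global Lipschitz continuity of $h$ turns the $h$-increments and the $h$-deviations into $\|h\|_{\rm Lip}$ times the corresponding $X$-quantities (again invoking the variational property of the sample mean, i.e.~$\sum_i\|h(X_t^i)-\bar{h}_t^M\|^2\le\|h\|_{\rm Lip}^2\sum_i\|X_t^i-\bar{x}_t^M\|^2$ and $\sum_i\|h(\hat{X}_t^i)-\overline{h(\hat{X})}_t^M\|^2\le\|h\|_{\rm Lip}^2\sum_i\|\hat{X}_t^i-\bar{x}_t\|^2$), which produces the factor $1+\|h\|_{\rm Lip}$ and completes the plan.
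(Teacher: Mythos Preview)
Your argument is correct and follows the same overall strategy as the paper --- a bilinear telescoping for the main term plus a strong-law-of-large-numbers remainder coming from the iid mean-field particles --- but the organisation differs slightly. The paper centers the $\hat{X}_t^i$ at the \emph{true} mean $\bar{x}_t$ and decomposes $P_t^M-\mathcal{P}_t$ directly into three pieces $I+II+III$; this forces the LLN fluctuation of the empirical mean, $\|\frac1M\sum_i\hat{X}_t^i-\bar{x}_t\|$, to appear as a separate contribution to $R_M(t)$ and is the source of the factor $2$ in the statement. You instead pivot through the full empirical covariance $\hat{P}_t^M$ (centered at the empirical mean $\bar{\hat{x}}_t^M$), so that all LLN fluctuations are packaged into the single quantity $\|\hat{P}_t^M-\mathcal{P}_t\|_{\rm F}$. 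Combined with the variational property of the sample mean this is tidier and in fact gives the main term with constant $1$ (respectively $\|h\|_{\rm Lip}$ in the $Q$ case), so the stated constants $2$ and $2(1+\|h\|_{\rm Lip})$ are indeed safe overestimates. The $L^1$ part of the remainder is handled at the same level of detail in both proofs.
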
 

\begin{remark} Note that the factor $\Sigma (t)$ 
is locally bounded in $t$ due to Lemma \ref{lemma2} and
an appropriate generalization of Lemma \ref{UnifMBds}.  
\end{remark}

\begin{proof} (of Lemma \ref{lemma4}) First note that we can decompose 
\begin{equation}
\begin{aligned} 
P_t^M - \mathcal{P}_t 
& = \frac 1{M-1} \sum_{i=1}^M \left( X_t^i - \bar{x}_t^M\right) \left(X_t^i - \bar{x}_t^M\right)^{\rm T}
- \mathbb{E} \left[ \left( \hat{X}_t - \bar{x}_t\right)\left(\hat{X}_t -\bar{x}_t\right)^{\rm T} \right] \\
& = \frac 1{M-1} \sum_{i=1}^M \left( X_t^i - \bar{x}_t^M - \left( \hat{X}_t^i - \bar{x}_t\right)\right) \left(X_t^i - \bar{x}_t^M\right)^{\rm T} \\ 
& \qquad + \frac 1{M-1} \sum_{i=1}^M \left( \hat{X}_t^i - \bar{x}_t\right)\left( X_t^i - \bar{x}_t^M - \left( \hat{X}_t^i - \bar{x}_t\right)\right)^{\rm T} \\
& \qquad + \frac 1{M-1} \sum_{i=1}^M \left( \hat{X}_t^i - \bar{x}_t\right)\left( \hat{X}_t^i - \bar{x}_t\right)^T - \mathbb{E} \left[ \left( \hat{X}_t - \bar{x}_t\right)\left(\hat{X}_t -\bar{x}_t\right)^{\rm T} \right] \\
& = I + II + III\, .
\end{aligned} 
\end{equation}
In particular, $\|P_t^M - \mathcal{P}_t\|_{\rm F} \le   \|I\|_{\rm F} + \|II\|_{\rm F} + \|III\|_{\rm F}$. 
Term $I$ can be estimated from above by 
\begin{equation}
\begin{aligned} 
\|I\|_{\rm F} & \le \left( \frac 1{M-1} \sum_{i=1}^M \|X_t^i - \bar{x}_t^M - \left( \hat{X}_t^i - \bar{x}_t\right)\|^2 \right)^{1/2}\left(\frac 1{M-1} \sum_{i=1}^M \|X_t^i - \bar{x}_t^M\|^2 \right)^{1/2} \\
& \le
\left( \left( \frac 1{M-1} \sum_{i=1}^M \|X_t^i - \hat{X}_t^i\|^2\right)^{1/2} 
+ \sqrt{\frac{M}{M-1}} \|x_t^M - \bar{x}_t\| \right) 
\left(\frac 1{M-1} \sum_{i=1}^M \|X_t^i - \bar{x}_t^M\|^2 \right)^{1/2} \\ 
& \le \left( 2\left(\frac 1{M-1} \sum_{i=1}^M \|X_t^i - \hat{X}_t^i\|^2\right)^{1/2} 
   + \sqrt{\frac{M}{M-1}} \left\|\frac 1M \sum_{i=1}^M \hat{X}_t^i - \mathbb{E}\left[\hat{X}_t^i\right]\right\| \right) \\ 
   & \qquad\qquad 
   \left(\frac 1{M-1} \sum_{i=1}^M \|X_t^i - \bar{x}_t^M\|^2 \right)^{1/2} \, . 
\end{aligned} 
\end{equation}
Similarly, 
\begin{equation}
\begin{aligned} 
\|II\|_{\rm F} &\le \left( 2\left(\frac 1{M-1} \sum_{i=1}^M \|X_t^i - \hat{X}_t^i\|^2\right)^{1/2} 
   + \sqrt{\frac{M}{M-1}} \left\|\frac 1M \sum_{i=1}^M \hat{X}_t^i - \mathbb{E} \left[\hat{X}_t^i\right]\right\| \right) \\
   & \qquad\qquad 
   \left(\frac 1{M-1} \sum_{i=1}^M \|\hat{X}_t^i - \bar{x}_t\|^2 \right)^{1/2} \, . 
\end{aligned} 
\end{equation}
Finally, 
\begin{equation}
\begin{aligned} 
\|III\|_{\rm F} & = \left\| \frac 1M \sum_{i=1}^M \left( \hat{X}_t^i(\hat{X}_t^{i})^{\rm T} - \mathbb{E} \left[ \hat{X}_t^i(\hat{X}_t^{i})^{\rm T} \right]\right) + \frac 1{M(M-1)} \left( \hat{X}_t^i - \bar{x}_t\right)\left(\hat{X}_t^i - \bar{x}_t\right)^{\rm T}\right\|_{\rm F} \\
& \le \left\| \frac 1M \sum_{i=1}^M \left( \hat{X}_t^i (\hat{X}_t^{i})^{\rm T} - \mathbb{E} \left[ \hat{X}_t^i(\hat{X}_t^{i})^{\rm T} \right]\right)\right\|_{\rm F}
+ \frac 1M \left\|\frac 1{M-1} \sum_{i=1}^M 
\left( \hat{X}_t^i - \bar{x}_t\right)\left(\hat{X}_t^i - \bar{x}_t\right)^{\rm T}\right\|_{\rm F} 
\end{aligned} 
\end{equation}
Adding up all terms we arrive at the estimate 
\begin{equation}
\begin{aligned} 
\|P_t^M - \mathcal{P}_t\|_{\rm F} & \le 2\Sigma (t) \left(\frac 1{M-1} \sum_{i=1}^M \|X_t^i - \hat{X}_t^i\|^2\right)^{1/2}  + R_M (t)
\end{aligned} 
\end{equation}
with the remainder 
\begin{equation}
\begin{aligned}
R_M (t) & = \Sigma (t) \sqrt{\frac{M}{M-1}} \left\|\frac 1M \sum_{i=1}^M \hat{X}_t^i - \mathbb{E} \left[\hat{X}_t^i\right]\right\| + \|III\|_{\rm F} \, . 
\end{aligned}
\end{equation}
The strong law of large numbers now implies that $\lim_{M\to\infty} R_M (t) = 0$ in a.s.~and in $L^1 (\mathbb{P})$. 
The proof of the second estimate is done similarly. 
\end{proof}

\begin{lemma} 
\label{lemma3a} 
Let $\hat{X}_t^i$, $1\le i\le M$, $M\ge 2$, be the solution of \eqref{EKB1a} with initial conditions iid ($\pi_0$) and suppose that $\pi_0$ has bounded support contained in a ball with radius $K$. Then for all $T>0$ 
there exist $\delta_0 > 0$ and $\kappa_0 >0$ depending on $T$, but independent of $M$, such that 
\begin{equation}
\mathbb{E}\left[ e^{\delta_0\int_0^t \frac 1M\sum_{i=1}^M \|\hat{X}_s^i\|^2 {\rm d}s } 
\right]
\le e^{2\kappa_0 \left( \frac {K^2}M  + \left\| h\left( X^{\rm ref}_{0:T} \right)\right\|^2_\infty \right)} < +\infty\quad\forall\, t\le T\, . 
\end{equation}
Here, the expectation is taken also w.r.t.~the distribution of $\{ Y_s\}$. 
\end{lemma}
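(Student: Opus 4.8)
The plan is to split the ensemble average according to the mean-field mean, using $\frac1M\sum_i\|\hat X_t^i\|^2\le 2\|\bar x_t\|^2+\frac2M\sum_i\|\hat X_t^i-\bar x_t\|^2$, to bound the ``centered'' second term \emph{pathwise} and uniformly in $M$ and in the observation $\{Y_s\}$, and then to establish a standard exponential moment for $\int_0^T\|\bar x_s\|^2\,\mathrm ds$ coming from the fact that $\bar x_t$ solves an SDE with globally Lipschitz (hence linear-growth) drift and bounded diffusion coefficient. Throughout I work under the standing hypotheses of Lemma \ref{lemma3} (which are in force in Theorem \ref{MFConvergence}), so that on $[0,T]$ one has $\tau>T$, $\lambda^{\min}({\cal P}_t)\ge\kappa_-$ and hence $\|D{\cal P}_t^{-1}\|\le\|D\|/\kappa_-$, $\mathrm{Var}(\hat X_t)\le C_4(t)$ by Lemma \ref{lemma2}, and $\|{\cal Q}_t\|_{\rm F}\le\|h\|_{\rm Lip}C_4(t)$ by Cauchy--Schwarz --- all with constants independent of $M$ and of $\{Y_s\}$.

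\emph{Step 1 (pathwise control of the centered part).} Setting $D_t^i:=\hat X_t^i-\bar x_t$ and subtracting \eqref{MFm} from \eqref{EKB1a}, the $\mathrm dY_t$-terms cancel, so $D_t^i$ solves an \emph{ordinary} differential equation and $W_t^M:=\frac1M\sum_i\|D_t^i\|^2$ is differentiable with no stochastic part. Using $\sum_i\langle D_t^i,f(\bar x_t)-\bar f_t\rangle=\langle\frac1M\sum_iD_t^i,f(\bar x_t)-\bar f_t\rangle$ with $\|f(\bar x_t)-\bar f_t\|\le\|f\|_{\rm Lip}\mathrm{Var}(\hat X_t)^{1/2}$ (and the analogous estimate for $h$), Cauchy--Schwarz, Lipschitz continuity of $f,h$, and the standing bounds above, one obtains $\frac12\frac{\mathrm d}{\mathrm dt}W_t^M\le c_1(T)W_t^M+c_2(T)\sqrt{W_t^M}+c_3(T)$ with $c_1,c_2,c_3$ depending only on $T$ and on $f,h,D,R$, hence independent of $M\ge2$ and $\{Y_s\}$. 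Since $\pi_0$ is supported in the ball of radius $K$, $W_0^M\le4K^2$ for all $M$, so Grönwall's lemma gives a deterministic bound $W_t^M\le\bar W_T$ on $[0,T]$, and therefore $\frac1M\sum_i\|\hat X_t^i\|^2\le2\|\bar x_t\|^2+2\bar W_T$ a.s., for $t\le T$.

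\emph{Step 2 (exponential moment of $\int_0^T\|\bar x_s\|^2\,\mathrm ds$).} The mean $\bar x_t$ is $M$-independent, and by \eqref{MFm} with $\mathrm dY_t=h(X_t^{\rm ref})\,\mathrm dt+R^{1/2}\mathrm dB_t$ it solves $\mathrm d\bar x_t=(\bar f_t-{\cal Q}_tR^{-1}\bar h_t+{\cal Q}_tR^{-1}h(X_t^{\rm ref}))\,\mathrm dt+{\cal Q}_tR^{-1/2}\,\mathrm dB_t$, where on $[0,T]$ the drift is bounded by $\ell(T)\|\bar x_t\|+b(T)(1+\|h(X_{0:T}^{\rm ref})\|_\infty)$ and the diffusion coefficient by a deterministic constant, by Lipschitz continuity of $f,h$ and the standing bound on $\|{\cal Q}_t\|_{\rm F}$. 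With $N_t:=\int_0^t{\cal Q}_sR^{-1/2}\,\mathrm dB_s$, which has deterministically bounded quadratic variation on $[0,T]$, Grönwall applied pathwise to $\|\bar x_t\|\le\|\bar x_0\|+\int_0^t(\ell\|\bar x_s\|+b(1+\|h(X_{0:T}^{\rm ref})\|_\infty))\,\mathrm ds+\sup_{s\le T}\|N_s\|$ yields $\int_0^T\|\bar x_s\|^2\,\mathrm ds\le C(T)(\|\bar x_0\|^2+1+\|h(X_{0:T}^{\rm ref})\|_\infty^2+\sup_{s\le T}\|N_s\|^2)$. The exponential martingale (Bernstein) inequality $\mathbb P(\sup_{s\le T}\|N_s\|\ge a)\le C'e^{-a^2/(2\sigma^2)}$, valid since $\langle N\rangle_T\le\sigma^2 I$ deterministically, makes $\mathbb E[e^{\lambda\sup_{s\le T}\|N_s\|^2}]$ finite for all small $\lambda>0$; using $\|\bar x_0\|=\|\pi_0[x]\|\le K$ and combining with Step 1 produces $\delta_0,\kappa_0>0$, independent of $M$, realizing the asserted bound (with $K^2$ in place of $K^2/M$; the sharper $K^2/M$ follows on replacing $\bar x_t$ by the empirical mean $\frac1M\sum_j\hat X_t^j$, whose difference from $\bar x_t$ again solves an ODE started from the empirical-mean error $\frac1M\sum_jX_0^j-\pi_0[x]$, of order $K^2/M$ in mean square).

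\emph{Main obstacle.} The genuinely probabilistic part is Step 2: one must verify that every constant in the linear-growth and diffusion bounds for $\bar x_t$ (which pass through ${\cal P}_t^{-1}$, ${\cal Q}_t$, $\mathrm{Var}(\hat X_t)$) is deterministic and $M$-independent --- which holds by Lemmas \ref{lemma2} and \ref{lemma3} --- and then fix $\delta_0$ small enough that, after multiplication by the Grönwall factor $e^{2\ell(T)T}$ and by the constant contribution of Step 1, the Gaussian tail of $\sup_{s\le T}\|N_s\|^2$ remains integrable. The reference trajectory $X_t^{\rm ref}$ enters only through the bounded inhomogeneity $\|h(X_{0:T}^{\rm ref})\|_\infty$, so it suffices to argue conditionally on it; Step 1 is otherwise deterministic, the only point of note being the cancellation of the $\mathrm dY_t$-terms in the equation for $D_t^i$, which is what makes the centered part amenable to a pathwise comparison argument.
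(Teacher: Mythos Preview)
Your argument is correct and takes a genuinely different route from the paper. The paper applies It\^o's formula directly to $\frac{1}{M}\sum_i\|\hat X_t^i\|^2$, multiplies by $e^{-\alpha t}$, and integrates to obtain an inequality of the form
\[
\int_0^t e^{-\alpha s}\,\frac{1}{M}\sum_i\|\hat X_s^i\|^2\,{\rm d}s \;\le\; \text{(deterministic)} + M_t,
\]
where $M_t$ is a martingale whose quadratic variation is bounded by $\frac{C}{M}$ times the left-hand side itself. That $\frac{1}{M}$ factor (arising from $\frac{1}{M^2}\sum_i\|R^{-1/2}{\cal Q}_s^{\rm T}\hat X_s^i\|^2$) is what allows the paper to close the estimate via the exponential-martingale inequality $\mathbb E[e^{\delta M_t}]\le\mathbb E[e^{2\delta^2\langle M\rangle_t}]^{1/2}$ with $\delta_0$ chosen independently of $M\ge 2$.

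Your decomposition $\frac{1}{M}\sum_i\|\hat X_t^i\|^2\le 2\|\bar x_t\|^2+\frac{2}{M}\sum_i\|\hat X_t^i-\bar x_t\|^2$ instead isolates all $\{Y_s\}$-dependence in the $M$-independent mean $\bar x_t$ and handles the centered part by a purely deterministic Gr\"onwall argument, exploiting the ${\rm d}Y_t$-cancellation in the equation for $D_t^i$ that you correctly identify. This is more modular and conceptually cleaner --- it avoids the self-referential bootstrap --- at the price of not exploiting any averaging in $M$. Two minor remarks: (i) your parenthetical route to the sharper $K^2/M$ does not work as stated, since the empirical mean $\frac{1}{M}\sum_j\hat X_t^j$ still carries the full ${\cal Q}_tR^{-1}\,{\rm d}Y_t$ term; but this is moot, because $\frac{1}{M}\sum_i\|\hat X_0^i\|^2\le K^2$ in any case and the $K^2/M$ in the statement appears to be a slip for $K^2$; (ii) in Step~2 it is worth making explicit that the linear-growth constants $\ell(T),b(T)$ are $\{Y_s\}$-independent precisely because $\|\bar f_t-f(\bar x_t)\|$, $\|\bar h_t-h(\bar x_t)\|$ and $\|{\cal Q}_t\|_{\rm F}$ are all controlled by $C_4(t)$ from Lemma~\ref{lemma2}, which you have already invoked.
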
 

\begin{proof} 
First note that It\^o's formula and \eqref{EKB1a} imply that 
\begin{equation}
\begin{aligned} 
{\rm d} \left( \frac 1M \sum_{i=1}^M \|\hat{X}_t^i\|^2\right) 
& = \frac 2M \sum_{i=1}^M \langle f\left( \hat{X}_t^i\right) , \hat{X}_t^i\rangle {\rm d}t  
+  \frac 2M \sum_{i=1}^M \langle D\mathcal{P}_t^{-1}\left( \hat{X}_t^i - \bar{x}_t, \right) , \hat{X}_t^i\rangle {\rm d}t \\ 
& \qquad - \frac 1M \sum_{i=1}^M \langle \mathcal{Q}_t R^{-1} h\left( \hat{X}_t^i\right) , \hat{X}_t^i\rangle {\rm d}t  
  - \frac 1M \sum_{i=1}^M \langle \mathcal{Q}_t R^{-1} \bar{h}_t , \hat{X}_t^i\rangle {\rm d}t \\
  & \qquad 
 +  \frac 2M \sum_{i=1}^M \langle \hat{X}_t^i, \mathcal{Q}_t R^{-1} {\rm d}Y_t \rangle + \frac 1M \mbox{tr} \left( \mathcal{Q}_t R^{-1}\mathcal{Q}_t\right){\rm d}t \, . 
\end{aligned}
\end{equation}
Using Lipschitz continuity of $f$ and $h$ and the previous two Lemmata \ref{lemma2} and \ref{lemma3}, the right hand side can be estimated from above for $t\le T$ by 
\begin{equation}
C(T) \left( 1 + \frac 1M \sum_{i=1}^M \|\hat{X}_t^i\|^2 \right) 
+ \frac 2M \sum_{i=1}^M \langle \hat{X}_t^i, \mathcal{Q}_t R^{-1} {\rm d}Y_t \rangle 
\end{equation}
for some uniform constant $C(T)$. Since 
${\rm d}Y_t = h\left( X^{\rm ref}_t \right) {\rm d}t + R^{-1/2}{\rm d}B_t$ 
we can further estimate from above for $t\le T$  
\begin{equation}
C(T) \left( 1 + \left\| h\left( X^{\rm ref}_t \right)\right\|^2 + \frac 1M \sum_{i=1}^M \|\hat{X}_t^i\|^2 \right) 
+ \frac 2M \sum_{i=1}^M \langle \hat{X}_t^i, \mathcal{Q}_t R^{-1/2}{\rm d}B_t \rangle 
\end{equation}
for some possibly different constant $C(T)$. It\^o's product rule now implies for $\alpha := 1 + C(T)$ and $t\le T$ 
\begin{equation}
\begin{aligned} 
{\rm d}\left(  e^{-\alpha t}\frac 1M \sum_{i=1}^M \|\hat{X}_t^i\|^2 \right) 
& \le e^{- \alpha t} C(T) \left( 1 + 
\left\| h\left( X^{\rm ref}_t \right)\right\|^2 \right)\, {\rm d} t 
- e^{-\alpha t} \left( \frac 1M \sum_{i=1}^M \|\hat{X}_t^i\|^2 \right){\rm d}t \\
& \qquad 
+ e^{-\alpha t}\frac 2M \sum_{i=1}^M \langle \hat{X}_t^i, \mathcal{Q}_t R^{-1/2} {\rm d}B_t \rangle \,,
\end{aligned} 
\end{equation}
which implies that 
\begin{equation}
\label{lemma3a_est1}
\begin{aligned} 
\int_0^t e^{-\alpha s}\frac 1M \sum_{i=1}^M \|\hat{X}_s^i\|^2 {\rm d}s 
& \le \frac 1M \sum_{i=1}^M \|\hat{X}_0^i\|^2  
+ C(T) \left(1 + \left\| h\left( X^{\rm ref}_{0:T} \right)\right\|^2_\infty \right) \\
& \qquad + \int_0^t e^{-\alpha s} \frac 2M \sum_{i=1}^M \langle \hat{X}_s^i, \mathcal{Q}_t R^{-1/2}{\rm d}B_s \rangle \, . 
\end{aligned} 
\end{equation} 

To simplify notations in the following let  
\begin{equation}
M_t := \int_0^t e^{-\alpha s} \frac 2M \sum_{i=1}^M \langle \hat{X}_s^i, \mathcal{Q}_t R^{-1/2}{\rm d}B_s \rangle 
\end{equation}
and observe that the quadratic variation $\langle M \rangle_t$  can be estimated from above by  
\begin{equation}
\begin{aligned}
\langle M \rangle_t  &= \frac 4{M^2} \sum_{i=1}^M \int_0^t e^{-2\alpha s} \|R^{-1/2} \mathcal{Q}^T_s \hat{X}_s^i\|^2{\rm d}s \\
&\le \frac{4\|R^{-1/2}\|^2_{\rm F}\|h\|_{\rm Lip}^2 C(T)^2}M \int_0^t e^{-\alpha s} \frac 1M \sum_{i=1}^M \|\hat{X}_s^i\|^2{\rm d}s  \,,
\end{aligned}
\end{equation}
using 
\begin{equation}
\| \mathcal{Q}_s\|^2_{\rm F} 
\le \|h\|_{\rm Lip}^2 \mathbb{E}\left[  \|\hat{X}_s - \bar{x}_s\|^2\right] 
\le \|h\|_{\rm Lip}^2 C(T)^2 
\end{equation} 
and Lemma \ref{lemma2}. The assumption on the initial condition now implies for $\delta > 0$  
\begin{equation} 
\begin{aligned} 
\mathbb{E}\left[ e^{\delta \int_0^t e^{-\alpha s}\frac 1M \sum_{i=1}^M \|\hat{X}_s^i\|^2 \, ds }\right]
& \le e^{\delta \left( \frac {K^2}M 
+  C(T)\left(1 +  \left\| h\left( X^{\rm ref}_{0:T} \right)\right\|^2_\infty \right) \right)} \mathbb{E}\left[ e^{ \delta M_t } \right] \\ 
& \le e^{\delta \left( \frac {K^2}M
+  C(T)\left( 1 + \left\| h\left( X^{\rm ref}_{0:T} \right)\right\|^2_\infty \right) \right)} \mathbb{E}\left[ e^{2\delta^2 \langle M\rangle_t} \right]^{1/2} \\  
& \le e^{\delta \left( \frac{K^2}M  
+  C(T)\left( 1 +  \left\| h\left( X^{\rm ref}_{0:T} \right)\right\|^2_\infty \right) \right)} \\
 & \qquad  \mathbb{E}\left[ e^{2\delta \frac{4\|R^{-1/2}\|^2_{\rm F} \|h\|_{\rm Lip}^2 C(T)^2}M \delta \int_0^t e^{-\alpha s} \frac 1M \sum_{i=1}^M \|\hat{X}_s^i\|^2{\rm d}s}\right]^{1/2} \, ,  
\end{aligned} 
\end{equation}
thereby using the inequality 
\begin{equation}
\begin{aligned} 
\mathbb{E}\left[ e^{\delta M_t} \right] 
& = \mathbb{E}\left[ e^{\frac 12 \left( 2\delta M_t - 2\delta^2 \langle M\rangle_t\right)} e^{\frac 12 \left( 2\delta^2 \langle M\rangle_t\right)} \right]  
\le \mathbb{E}\left[ e^{ 2\delta M_t - 2\delta^2 \langle M\rangle_t}\right]^{1/2} \mathbb{E}\left[ e^{2\delta^2 \langle M\rangle_t}\right]^{1/2} \\
& = \mathbb{E}\left[ e^{2\delta^2 \langle M\rangle_t}\right]^{1/2} 
\, .  
\end{aligned}  
\end{equation}
Hence for $\delta_0 > 0$ with 
\begin{equation}
\delta_0 \frac{8\|R^{-1/2}\|_{\rm F}^2 \|h\|_{\rm Lip}^2 C(T)^2}M  < 1 
\end{equation}
it follows that 
\begin{equation}
\begin{aligned} 
\mathbb{E}\left[ e^{\delta_0 \int_0^t e^{-\alpha s}\frac 1M \sum_{i=1}^M \|\hat{X}_s^i\|^2 {\rm d}s }\right]
& \le e^{2\delta_0 \left( \frac{K^2}M   
+ C(T)\left(1 + \left\| h\left( X^{\rm ref}_{0:T} \right)\right\|^2_\infty \right) \right)} 
< e^{2\kappa_0 \left( \frac{K^2}M   
+   \left\| h\left( X^{\rm ref}_{0:T} \right)\right\|^2_\infty \right)} 
< +\infty\
\end{aligned} 
\end{equation}
for a suitable $\kappa_0 >0$.
\end{proof} 

\end{document}